\theoremstyle{definition}
\newtheorem{dfn}{Definition}[section]
\theoremstyle{plain}
\newtheorem{thm}{Theorem}[section]
\newtheorem{pro}{Proposition}[section]
\newtheorem{cor}{Corollary}[section]
\newtheorem{lma}{Lemma}[section]
\theoremstyle{definition}
\newtheorem{rem}{Remark}[section]
\newtheorem{exa}{Example}[section]
\newcommand{\R}{\mathbb{R}}
\newcommand{\E}{\mathbb{E}}
\renewcommand{\P}{\mathbb{P}}
\newcommand{\HH}{\mathcal{H}}
\newcommand{\la}{\langle}
\newcommand{\ra}{\rangle}
\renewcommand{\d}{\mathrm{d}}
\newcommand{\law}{\stackrel{\text{law}}{=}}
\numberwithin{equation}{section}
\begin{document}
\title{\textbf{Sufficient and Necessary Conditions for Limit Theorems for Quadratic Variations of Gaussian Sequences }}
\author{Lauri Viitasaari}

\renewcommand{\thefootnote}{\fnsymbol{footnote}}

\author{Lauri Viitasaari\footnotemark[1] \footnotemark[2]}

\footnotetext[1]{Department of Mathematics and System Analysis, Aalto University School of Science, Helsinki P.O. Box 11100, FIN-00076 Aalto,  Finland}

\footnotetext[2]{Department of Mathematics, Saarland University, Saarbr\"ucken\\
Postfach 151150, D-66041 Saarbr\"ucken, Germany}

\maketitle

\begin{abstract}
Quadratic variations of Gaussian processes play important role in both stochastic analysis and in applications such as estimation of model parameters, and for this reason the topic has been extensively studied in the literature. In this article we study the problem for general Gaussian processes and we provide sufficient and necessary conditions for different types of convergence which include convergence in probability, almost sure convergence, $L^p$-convergence as well as convergence in law. Furthermore, we study general Gaussian vectors from which different interesting cases including first or second order quadratic variations can be studied by appropriate choice of the underlying vector. Finally, we provide a practical and simple approach to attack the problem which simplifies the existing methodology considerably. 

\medskip

\noindent
{\it Keywords: Quadratic variations; Gaussian vectors; Gaussian processes; Convergence in probability; Strong convergence; Convergence in $L^p$; Central limit theorem}

\small skip

\noindent
{\it 2010 AMS subject classification: 60F05, 60F15, 60F25, 60G15} 
\end{abstract}

\tableofcontents


\section{Introduction}

Quadratic variation of a stochastic process $X$ plays an important role in different applications. For example, the concept is important is one is interested to develop stochastic calculus with respect to $X$. Furthermore, quadratic variations can be used to build estimators for the model parameters such as self-similarity index or parameter describing long range dependence which have important applications in all fields of science such as hydrology, chemistry, physics, and finance to simply name a few. For such applications one is interested to study the convergence of the quadratic variation. Furthermore, a wanted feature is to obtain a central limit theorem which allows one to apply statistical tools developed for normal random variables.

For Gaussian processes the study of quadratic variation goes back to L\'evy who studied standard Brownian motion and showed the almost sure convergence
$$
\lim_{n\to\infty}\sum_{k=1}^{2^n} \left[W_{\frac{k}{2^n}}-W_{\frac{k-1}{2^n}}\right]^2 =1.
$$
Later this result was extended to cover more general Gaussian processes in Baxter \cite{Bax} and in Gladyshev \cite{glady} for uniformly divided partitions. General subdivisions were studied in Dudley \cite{dud} and Klein \& Gine \cite{klei-gine} where the optimal condition $o\left(\frac{1}{\log n}\right)$ for the mesh of the partition was obtained in order to obtain almost sure convergence. It is also known that for the standard Brownian motion the condition $o\left(\frac{1}{\log n}\right)$ is not only sufficient but also necessary. For details on this topic see De La Vega \cite{vega} for construction, and \cite{le-er} for recent results.
Functional central limit theorem for general class of Gaussian processes were studied in Perrin \cite{perrin}. More recently Kubilius and Melichov \cite{kub-mel} defined a modified Gladyshev's estimator and the authors also studied the rate of convergence. Norvai\^sa \cite{norvaisa} have extended Gladyshev's theorem to a more general class of Gaussian processes. Finally, we can mention a paper by Malukas \cite{malukas} who extended the results of Norvai\^sa to irregular partitions, and derived sufficient conditions for the mesh in order to obtain almost sure convergence. 

The case of fractional Brownian motion with Hurst index $H\in(0,1)$ were studied in details by Gyons and Leons \cite{gyo-leo} where the authors showed that appropriately scaled first order quadratic variation (that is, the one based on differences $X_{t_k} - X_{t_{k-1}}$) converges to a Gaussian limit only if $H<\frac{3}{4}$. To overcome this problem, a generalisations of quadratic variations were used in \cite{I-L}, \cite{benassi-et-al}, \cite{co}, and \cite{cohen-et-al}. The most commonly used generalisation is second order quadratic variations based on differences $X_{t_{k+1}}-2X_{t_k}+X_{t_{k-1}}$ which was studied in details in a series of papers by Begyn \cite{begyn-ejp,begyn-spa,begyn-ber} with applications to fractional Brownian sheet and time-space deformed fractional Brownian motion. In particular, in \cite{begyn-ejp} the sufficient condition for almost sure convergence was studied with non-uniform partitions. The central limit theorem and its functional version were studied in \cite{begyn-ber} and \cite{begyn-spa} with respect to a standard uniform divided partitions. Furthermore, the authors in papers \cite{Breu-major,taqqu} have studied more general variations assuming that the underlying Gaussian process have stationary increments. For another generalisation, the localised quadratic variations were introduced in \cite{benassi-et-al2} in order to estimate the Hurst function of multifractional Gaussian process. These results have been generalised in \cite{coeurjolly,lacaux}. 

The fractional Brownian motion has received a lot of attention in modelling as a (relatively) simple generalisation of a standard Brownian motion. However, for some applications the assumption of stationary increments is an unwanted feature. For this reason there is a need for extensions of fractional Brownian motion. Recent such generalisations are sub-fractional Brownian motion depending on one parameter $H\in(0,1)$ introduced by Bojdecki et al. \cite{bojdecki-et-al}, and bifractional Brownian motion depending on two parameters $H\in(0,1)$ and $K\in(0,1]$ (the case $K=1$ corresponding to the fractional Brownian motion) introduced by Houdr\'e and Villa \cite{h-v}, and later studied in more details by Russo and Tudor \cite{ru-tu}. Furthermore, bifractional Brownian motion was extended for values $H\in(0,1)$, $K\in(1,2)$ satisfying $HK\in(0,1)$ in \cite{bar-seb}.

Recently there has also been interest in general Hermite variations which are partially motivated by the contributing paper by Breuer \& Major \cite{Breu-major}. For results related to fractional Brownian motion, we refer to \cite{bre-nou,nou-et-al,nou-rev}. The integrals of fractional Brownian motion were studied in \cite{cor-et-al}. Moreover, fractional Brownian sheet have received attention at least in papers \cite{rev,rev-et-al,pak-rev}. Furthermore, generalisations are studied in \cite{pakkanen} and \cite{barn-et-al}. The mentioned papers studying general Hermite variations are not focused on practical importance of quadratic variations. Furthermore, the proofs are based on now well-developed Malliavin calculus.

This paper aims to take a practical, instructive, and a general approach to quadratic variations. Firstly, our aim is to provide easy to check conditions for practitioners which still covers many interesting cases. Secondly, with our simplified approach we are able to provide intuitively clear explanations such as the discussion in subsection \ref{subsec:a-qv} rather than get lost into technical details. Finally, to obtain the generality we study sequences of general $n$-dimensional Gaussian vectors $Y^n=\left(Y_1^{(n)}, \ldots, Y_n^{(n)}\right)$, where each component $Y^{(n)}_k$ may depend on $n$, and we study the asymptotic behaviour of the vector $Y^n$ or its quadratic variation defined as the limit
$$
\lim_{n\to \infty} \sum_{k=1}^n \left[Y^{(n)}_k\right]^2
$$
provided it exists in some sense. As such, different cases such as first or second order quadratic variations can be obtained by choosing the vectors $Y^n$ suitably, and this fact will be illustrated in the present paper.  

We begin by providing sufficient and necessary conditions for the convergence in probability which, applied to some quadratic functional of a given process, can be used to construct consistent estimators of the model parameters. Furthermore, we show that in this case the convergence holds also in $L^p$ for any $p\geq 1$. We will also apply the well-known Gaussian concentration inequality for Hilbert-valued Gaussian random variables which provides a simple condition that guarantees the almost sure convergence. This condition is applied to quadratic variations of Gaussian processes with non-uniform partitions for which we obtain sufficient conditions for the convergence. More importantly, this result is shown to hold for many cases of interest and in the particular case of standard Brownian motion, this condition corresponds to the known sufficient and necessary condition. Compared to the existing literature, in many of the mentioned studies the almost sure convergence is obtained by the use of Hanson and Wright inequality \cite{hanson-wright} together with some technical computations. In this paper we show how these results follow easily from Gaussian concentration phenomena.

We will also study central limit theorem in our general setting. We begin by providing sufficient and necessary conditions under which appropriately scaled quadratic variation converges to a Gaussian limit. To obtain this result we apply a powerful fourth moment theorem proved by Nualart and Peccati \cite{nu-pe} which, thanks to the recent results by Sottinen and the current author \cite{sot-vii}, can essentially be applied always. We will also show how a version of Lindeberg's central limit theorem for this case follows easily. Finally, we will use some well-known matrix norm relations to obtain surprisingly simple way to obtain a convergence towards a normal random variable. More remarkably, it seems that this simple condition is essentially the one used in many of the mentioned studies while the result is derived case by case. We will also provide a Berry-Esseen type bound that holds in our general setting which,  to the best of our knowledge, is not present in the literature excluding some very special cases. Furthermore, our approach does not require the knowledge of Malliavin calculus and should be applicable to anyone with some background on linear algebra and Gaussian vectors.

To summarise, in this paper we give sufficient and necessary conditions for the convergence of guadratic variations of general Gaussian vectors which can be used to reproduce and generalise the existing results. Furthermore, we give easily checked sufficient conditions how one can obtain the wanted convergence results. As such, with our approach we are able to generalise the existing results as well as simplify the proofs considerably by relying on different techniques. At the best, the methods and results of this paper would provide new tools to attack the problem under consideration while classically the problem is studied by relying on Hanson and Wright inequality together with Lindeberg's central limit theorem. 

The rest of the paper is organised as follows. In section \ref{sec:conv} we study general Gaussian vectors and provide our main results. In section \ref{sec:qv} we illustrate how our results can be used to study quadratic variations. We will consider non-uniform sequences and generalise some of the existing results. The main emphasis is on first order quadratic variations which is more closely related to stochastic calculus while we also illustrate how second order quadratic variations can be studied with our approach. We end section \ref{sec:qv} with a discussion on general quadratic variations which is close in spirit with the work by Istas and Lang \cite{I-L}. Finally, section \ref{sec:ex} is devoted to examples.

\section{Convergence of Gaussian sequences}
\label{sec:conv}
\subsection{Notation and first results}
Let $Y^n=\left(Y_1^{(n)},Y_2^{(n)},\ldots, Y_n^{(n)}\right)$ be a zero mean Gaussian vector, where each $Y_k^{(n)}$ possibly depends on $n$. 
We consider properties of sequences $Y^n$ as $n$ tends to infinity. Throughout the paper we will also use Landau notation, i.e. for a sequences $a_n$ and $b_n$ we denote;
\begin{itemize}
\item $a_n = O(b_n)$ if $\sup_{n\geq 1}\frac{|a_n|}{|b_n|} < \infty$,
\item $a_n = o\left(b_n\right)$ if $\lim_{n\to\infty}\frac{|a_n|}{|b_n|} = 0 $. 
\end{itemize} 
We also denote $a_n \sim b_n$ as $n\to \infty$ if $\lim_{n\to \infty}\frac{a_n}{b_n} \to 1$. 

We begin with the following definition which is a discrete analogy to the similar concepts introduces in \cite{ru-va2}.
\begin{dfn}
\begin{enumerate}
\item
We say that the sequence $Y=(Y^n)_{n=1}^\infty$ has a quadratic variation $\la Y\ra$ if the random variable $\la Y\ra:=\lim_{n\to \infty} \sum_{k=1}^n \left(Y_k^{(n)}\right)^2$ exists as a limit in probability.
\item
The energy of a sequence $Y=(Y^n)_{n=1}^\infty$ is defined as the limit 
$$
\varepsilon(Y):=\lim_{n\to \infty} \sum_{k=1}^n \E\left(Y_k^{(n)}\right)^2
$$
provided the limit exists.
\item
We say that $Y$ has $2$-planar variation defined as the limit 
$$
\Upsilon(Y):=\lim_{n,m\to \infty} \sum_{k=1}^n \sum_{j=1}^m\left[\E\left[Y_k^{(n)}Y_j^{(m)}\right]\right]^2
$$
provided the limit exists.
\end{enumerate}
\end{dfn}
We will also denote 
\begin{equation}
\label{eq:V_n}
V_n = \sum_{k=1}^n \left[\left(Y_k^{(n)}\right)^2 - \E\left(Y_k^{(n)}\right)^2\right]
\end{equation}
for the centered quadratic variation.
\begin{exa}
Let $X=(X_t)_{t\in[0,T]}$ be a centred Gaussian process, $t_k^n = \frac{k}{n}$ and let $\Delta_k X$ denote some difference of the Gaussian process $X$. 
By setting $Y^{(n)}_k = \frac{\Delta_k X}{n\E (\Delta_k X)^2}$ we have $\E \left(Y_k^{(n)}\right)^2 = n^{-1}$ and
$$
V_n = \frac{1}{n} \sum_{k=1}^n \frac{\Delta_k X}{\E (\Delta_k X)^2} - 1.
$$
Hence by setting $\Delta_k X = X_{t_k^n} - X_{t^n_{k-1}}$ we obtain the first order quadratic variation with respect to uniform partition. Similarly, by setting 
$\Delta_k X = X_{t^n_{k+1}} - 2X_{t^n_k} + X_{t^n_{k-1}}$ we obtain the second order quadratic variation with respect to uniform partition. 
\end{exa}
The Euclidean distance of the vector $Y^n=(Y_1^{(n)},Y_2^{(n)},\ldots, Y_n^{(n)})$ is given by
$$
\Vert Y^n\Vert_2 = \sqrt{\sum_{k=1}^n \left(Y_k^{(n)}\right)^{2}}.
$$
Note that the norm $\Vert \cdot \Vert_2$ also depends on the dimension $n$ which we will omit on the notation.
We will denote by $\Gamma^{(n)}$ the covariance matrix of the vector $Y^n$, i.e. the $n\times n$-matrix with elements 
\begin{equation}
\Gamma_{jk}^{(n)} = \E\left(Y_j^{(n)}Y_k^{(n)}\right).
\end{equation}
Note now that with this notation the energy of a process $Y$ is simply the limit of the trace of the matrix $\Gamma$, i.e. $\varepsilon(Y) = \lim_{n\to\infty}\text{trace}(\Gamma^n)$. Similarly, $Y^n$ has a quadratic variation if the limit $\Vert Y^n\Vert_2^2$ converges as $n$ tends to infinity. Recall next that the Frobenius norm of a matrix $\Gamma^{(n)} = (\Gamma^{(n)}_{ij})_{i,j=1,\ldots,n}$ is given by
$$
\Vert \Gamma^{(n)} \Vert_F = \sqrt{\sum_{i,j=1}^n \left(\Gamma_{ij}^{(n)}\right)^2}.
$$
Hence we have
\begin{equation}
\label{eq:frob_limit}
\lim_{n\to\infty} \Vert \Gamma^{(n)} \Vert_F^2 = \lim_{n\to\infty} \sum_{k=1}^n \sum_{j=1}^n \left[\E\left[Y_k^{(n)}Y_j^{(n)}\right]\right]^2.
\end{equation}
We will later show that in interesting cases we also have
$$
\lim_{n,m\to\infty} \sum_{k=1}^n \sum_{j=1}^m\left[\E\left[Y_k^{(n)}Y_j^{(m)}\right]\right]^2 = \lim_{n\to\infty} \sum_{k=1}^n \sum_{j=1}^n \left[\E\left[Y_k^{(n)}Y_j^{(n)}\right]\right]^2
$$
which, in view of \eqref{eq:frob_limit}, shows that the $2$-planar variation $\Upsilon(Y)$ is given by
$$
\Upsilon(Y) = \lim_{n\to \infty} \Vert \Gamma^{(n)} \Vert_F^2.
$$
The following first result concerns convergence in probability. The proof follows essentially the arguments presented in \cite{ru-va2} and is based on cumulant formulas for Gaussian random variables. The main difference is that we also prove the convergence in $L^p$ for any $p\geq 1$ which have some important consequences in stochastic analysis (to be detailed in a forthcoming paper) while in \cite{ru-va2} the authors considered only $L^2$-convergence. The proof follows the ideas presented in \cite{ru-va2} but we will present the key points for the sake of completeness.
\begin{thm}
\label{thm:QV-ex}
Let $Y=(Y^n)_{n=1}^\infty$ be a sequence of Gaussian vectors with finite energy. Then quadratic variation exists as a limit in probability for every $t\geq 0$ if and only if the sequence $(Y^n)_{n=1}^\infty$ has 2-planar variation. In this case, the convergence holds also in $L^p$ for any number $p\geq 1$.
\end{thm}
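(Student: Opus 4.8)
The plan is to reduce everything to the centered quantity $V_n$ defined in \eqref{eq:V_n} and to its $L^2$ geometry. Since $\E V_n = 0$ and $\E\left[\sum_{k=1}^n \left(Y_k^{(n)}\right)^2\right] = \mathrm{trace}\!\left(\Gamma^{(n)}\right) \to \varepsilon(Y)$ by the finite energy assumption, the quadratic variation $\sum_k \left(Y_k^{(n)}\right)^2$ converges in probability (resp.\ in $L^p$) if and only if $V_n$ does. The first step is therefore to compute the relevant second moments. Using the Isserlis--Wick identity $\E[X^2 Z^2] = \E[X^2]\E[Z^2] + 2(\E[XZ])^2$ for jointly centered Gaussian $X,Z$, I would obtain
\[
\mathrm{Var}(V_n) = 2\Vert \Gamma^{(n)}\Vert_F^2, \qquad \mathrm{Cov}(V_n, V_m) = 2\sum_{k=1}^n\sum_{j=1}^m \left[\E\!\left(Y_k^{(n)} Y_j^{(m)}\right)\right]^2 =: 2f(n,m),
\]
where the covariance formula requires the $Y^n$ to be jointly Gaussian across different $n$, as is implicit in the definition of $\Upsilon(Y)$. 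These two identities are the backbone of both implications, since $f(n,n) = \Vert\Gamma^{(n)}\Vert_F^2$ and $\Upsilon(Y) = \lim_{n,m\to\infty} f(n,m)$.

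For the sufficiency direction, assume the $2$-planar variation $\Upsilon(Y) = \lim_{n,m\to\infty} f(n,m)$ exists. From the identities above,
\[
\E\left[(V_n - V_m)^2\right] = \mathrm{Var}(V_n) + \mathrm{Var}(V_m) - 2\,\mathrm{Cov}(V_n,V_m) = 2\left[f(n,n) + f(m,m) - 2f(n,m)\right].
\]
Because the double limit of $f(n,m)$ exists, each of $f(n,n)$, $f(m,m)$ and $f(n,m)$ tends to $\Upsilon(Y)$ as $n,m\to\infty$, so the bracket vanishes in the limit. Hence $(V_n)$ is Cauchy in $L^2$ and, by completeness, converges in $L^2$ to some $V$; adding back the deterministic, convergent mean yields convergence of the quadratic variation in $L^2$, a fortiori in probability.

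For the necessity direction, the key structural fact is that each $V_n$ is a centered quadratic form in the underlying Gaussian family, so that its cumulants are $\kappa_p(V_n) = 2^{p-1}(p-1)!\,\mathrm{trace}\!\bigl((\Gamma^{(n)})^p\bigr)$ for $p\ge 2$. Since $\mathrm{trace}\!\bigl((\Gamma^{(n)})^p\bigr) \le \Vert \Gamma^{(n)}\Vert_F^{\,p}$ for $p\ge 2$, all higher cumulants---and hence all moments---are controlled by the variance $2f(n,n)$; this is the hypercontractive estimate in concrete form. I would first use it to show that convergence in probability forces $\sup_n f(n,n) < \infty$: otherwise, along a subsequence $\sigma_n^2 := \mathrm{Var}(V_n)\to\infty$, while $V_n/\sigma_n \to 0$ in probability yet has unit variance and uniformly bounded fourth moment, contradicting uniform integrability. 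With the variances bounded, the fourth-moment bound makes $\{V_n^2\}$ uniformly integrable, so convergence in probability upgrades to $L^2$-convergence, $V_n\to V$. Then by Cauchy--Schwarz $\mathrm{Cov}(V_n,V_m) = 2f(n,m) \to \E[V^2]$ as $n,m\to\infty$, which is precisely the existence of $\Upsilon(Y)$. The same cumulant bounds give $\Vert V_n - V\Vert_p \le c_p \Vert V_n - V\Vert_2$ on the second chaos, so the $L^2$-convergence automatically improves to $L^p$-convergence for every $p\ge 1$.

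The main obstacle is this necessity direction, and specifically the passage from convergence in probability to $L^2$-convergence: convergence in probability alone never implies $L^2$-convergence without an integrability input, and here that input is the cumulant/hypercontractivity estimate together with the a priori boundedness of the variances $f(n,n)$. Establishing that boundedness cleanly---rather than assuming it---is the delicate point, and the scaling argument above is the natural route; everything else reduces to the two moment identities and the completeness of $L^2$.
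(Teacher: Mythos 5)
Your proof is correct, and its first half is essentially the paper's: the Wick identity $\mathrm{Cov}(V_n,V_m)=2f(n,m)$ and the resulting Cauchy-in-$L^2$ computation for sufficiency are exactly the paper's equation \eqref{eq:key_relation} argument (the paper works with the uncentered $Z_n=\sum_k(Y_k^{(n)})^2$ rather than $V_n$, an immaterial difference). Where you genuinely diverge is the necessity direction, i.e.\ the integrability input needed to upgrade convergence in probability. The paper gets this directly from finite energy: by Minkowski's inequality for integrals and the Gaussian moment scaling $\left[\E\left(Y_k^{(n)}\right)^{2p}\right]^{1/p}=C_p\,\E\left(Y_k^{(n)}\right)^{2}$, one has $\left[\E Z_n^p\right]^{1/p}\le C_p\,\mathrm{trace}\left(\Gamma^{(n)}\right)$, so $Z_n$ is bounded in every $L^p$; this single estimate makes $Z_n^p$ uniformly integrable for all $p$, turns convergence in probability into $L^p$-convergence at once, and then \eqref{eq:key_relation} yields $\Upsilon(Y)$, as in your final step. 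You instead extract integrability from the second-chaos structure: the cumulant formula $\kappa_p(V_n)=2^{p-1}(p-1)!\,\mathrm{trace}\left((\Gamma^{(n)})^p\right)$, the bound $\mathrm{trace}\left((\Gamma^{(n)})^p\right)\le\Vert\Gamma^{(n)}\Vert_F^p$, a scaling argument to force $\sup_n\mathrm{Var}(V_n)<\infty$, and hypercontractivity for the $L^p$ claim. All of this is valid, but note that the step you single out as the delicate point is actually a one-line consequence of the hypothesis you already have: since $\Gamma^{(n)}$ is positive semi-definite, $\Vert\Gamma^{(n)}\Vert_F\le\mathrm{trace}\left(\Gamma^{(n)}\right)$, so finite energy gives $\mathrm{Var}(V_n)=2\Vert\Gamma^{(n)}\Vert_F^2\le 2\left(\mathrm{trace}\,\Gamma^{(n)}\right)^2$ bounded, and your contradiction/scaling argument can be deleted. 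What your route buys is that the boundedness and UI steps rest only on the joint Gaussian (second-chaos) structure, with finite energy used solely for centering, and it makes explicit the chaos viewpoint the paper itself only invokes later for its CLT results (Theorem \ref{thm:CLT_iff}); what the paper's route buys is economy, since one elementary Minkowski estimate replaces the cumulant formula, the scaling argument, and hypercontractivity, and it delivers boundedness in every $L^p$ simultaneously.
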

\begin{proof}
Denote 
$$
Z_n = \Vert Y^n \Vert_{2}^{2}=\sum_{k=1}^n \left(Y_k^{(n)}\right)^2.
$$
We start by showing that 
$$
Z^p_n = \left[\sum_{k=1}^n \left(Y^{(n)}_k\right)^2\right]^p
$$
is uniformly integrable for any $p\geq 1$.
By Minkowski's inequality for measures we get
\begin{equation*}
\begin{split}
&\left[\E \left(\sum_{k=1}^n\left(Y_k^{(n)}\right)^2\right)^p\right]^{\frac{1}{p}}\\
&\leq \sum_{k=1}^n \left[\E\left(Y_k^{(n)}\right)^{2p}\right]^{\frac{1}{p}}\\
&= C_p \sum_{\pi_n} \E\left(Y_k^{(n)}\right)^2
\end{split}
\end{equation*}
by the fact that $Y_k$ is Gaussian. Now this upper bound converges to $\varepsilon(Y)$, and consequently the quantity $\left(\sum_{k=1}^n \left(Y^{(n)}_k\right)^2\right)^p$ is uniformly integrable for any $p\geq 1$. 
Now we have
$$
\E(Z_n-Z_m)^2 = \E Z_n^2 + \E Z_m^2 - 2 \E(Z_nZ_m),
$$
where
$$
\E(Z_nZ_m)=\sum_{k=1}^n\sum_{j=1}^m \E\left[\left(Y^{(n)}_k\right)^2\left(Y^{(m)}_j\right)^2\right]
$$
Here we have 
\begin{equation}
\label{eq:basic_cumulant}
\E\left[\left(Y^{(n)}_k\right)^2\left(Y^{(m)}_j\right)^2\right] = 2\left[\E\left(Y^{(n)}_kY^{(m)}_j\right)\right]^2 + \E\left(Y^{(n)}_k\right)^2\E\left(Y^{(m)}_j\right)^2.
\end{equation} 
Hence we have
\begin{equation}
\sum_{k=1}^n\sum_{j=1}^m \E\left[\left(Y^{(n)}_k\right)^2\left(Y^{(m)}_j\right)^2\right] = 2\sum_{k=1}^n\sum_{j=1}^m \left[\E\left(Y^{(n)}_kY^{(m)}_j\right)\right]^2 + \sum_{k=1}^n \E \left(Y^{(n)}_k\right)^2 \times \sum_{k=1}^n \E \left(Y^{(n)}_k\right)^2.
\end{equation}
Consequently, 
\begin{equation}
\label{eq:key_relation}
\E(Z_nZ_m) = 2\sum_{k=1}^n\sum_{j=1}^m \left[\E\left(Y^{(n)}_kY^{(m)}_j\right)\right]^2 + \sum_{k=1}^n \E (Y^{(n)}_k)^2 \times \sum_{k=1}^n \E (Y^{(n)}_k)^2,
\end{equation}
where
$$
\sum_{k=1}^n \E (Y^{(n)}_k)^2 \times \sum_{k=1}^n \E (Y^{(n)}_k)^2 \rightarrow \varepsilon(Y)^2
$$
by the fact that $Y$ has finite energy. Recall also that 2-planar variation is given by
$$
\Upsilon(Y) =\lim_{n,m\to\infty} \sum_{k=1}^n\sum_{j=1}^m \left[\E\left(Y^{(n)}_kY^{(m)}_j\right)\right]^2.
$$
Hence relation \eqref{eq:key_relation} implies the result. Indeed, assuming that $Z_n$ converges in probability, then uniform integrability implies that $\la Y\ra \in L^p$ and the convergence holds also in $L^p$. Hence $\E(Z_nZ_m)$ converges, and \eqref{eq:key_relation} implies that 2-planar variation exists. Conversely, if 2-planar variation exists, then \eqref{eq:key_relation} implies that $\E(Z_nZ_m)$ converges to the same limit as $\E(Z_n^2)$ which concludes the proof. 
\end{proof}
\begin{rem}{\rm
It is straightforward to check that the $L^p$-convergence takes place also in a continuous setting of Russo and Vallois \cite{ru-va2}.
}\end{rem}
The following theorem gives condition when the quadratic variation is deterministic. It seems that this is indeed true in many cases of interest.
\begin{pro}
\label{pro:deterministic_QV}
Let $Y=(Y^n)_{n=1}^\infty$ be a sequence of centred Gaussian vectors such that $Y$ has finite energy. Then quadratic variation exists as a limit in probability and is deterministic if and only if the 2-planar variation is zero. In this case $\la Y\ra$ equals to the energy of the process and the convergence holds also in $L^p$ for any $p\geq 1$.
\end{pro}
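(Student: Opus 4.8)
The plan is to reduce everything to the variance of $Z_n = \sum_{k=1}^n \left(Y_k^{(n)}\right)^2$ and to exploit that, by Theorem \ref{thm:QV-ex}, convergence in probability already forces the $2$-planar variation to exist together with $L^p$-convergence. First I would record the exact expression for the variance. Setting $m=n$ in the cumulant relation \eqref{eq:key_relation} and subtracting $(\E Z_n)^2 = \left(\sum_{k=1}^n \E(Y_k^{(n)})^2\right)^2$, the energy-squared term cancels and I obtain
$$
\mathrm{Var}(Z_n) = 2\sum_{k=1}^n\sum_{j=1}^n \left[\E\left(Y_k^{(n)}Y_j^{(n)}\right)\right]^2 = 2\Vert \Gamma^{(n)}\Vert_F^2,
$$
so that the variance of the quadratic variation is precisely twice the diagonal ($m=n$) Frobenius term appearing in the definition of $\Upsilon(Y)$.

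For the \emph{if} direction, assume $\Upsilon(Y)=0$. Since the double limit defining $\Upsilon(Y)$ exists and equals $0$, its diagonal restriction $\lim_{n\to\infty} \Vert\Gamma^{(n)}\Vert_F^2$ equals $0$ as well, whence $\mathrm{Var}(Z_n)\to 0$ by the identity above. Combined with $\E Z_n = \sum_{k=1}^n \E(Y_k^{(n)})^2 \to \varepsilon(Y)$, which holds by finite energy, this gives $\E(Z_n - \varepsilon(Y))^2 = \mathrm{Var}(Z_n) + (\E Z_n - \varepsilon(Y))^2 \to 0$. Hence $Z_n \to \varepsilon(Y)$ in $L^2$, so the quadratic variation exists in probability, is the deterministic constant $\varepsilon(Y)$, and the upgrade to $L^p$ for every $p\geq 1$ follows directly from Theorem \ref{thm:QV-ex}.

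For the \emph{only if} direction, assume the quadratic variation exists in probability and is a deterministic constant $c$. Theorem \ref{thm:QV-ex} then guarantees that $\Upsilon(Y)$ exists and that $Z_n \to c$ in $L^p$, in particular in $L^2$. Writing again $\E(Z_n - c)^2 = \mathrm{Var}(Z_n) + (\E Z_n - c)^2$ and noting that both summands are nonnegative, $L^2$-convergence to the constant $c$ forces simultaneously $\E Z_n \to c$ and $\mathrm{Var}(Z_n) \to 0$; the former identifies $c = \varepsilon(Y)$ by finite energy, while the latter gives $\lim_{n\to\infty} \Vert \Gamma^{(n)}\Vert_F^2 = 0$.

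Finally, the one point requiring care — and the only genuine obstacle — is to pass from this diagonal limit back to the full double limit $\Upsilon(Y)$. Here I would use that $\Upsilon(Y)$ is already known to exist (again from Theorem \ref{thm:QV-ex}): since a genuine double limit $\lim_{n,m\to\infty}$ restricts along the diagonal $m=n$ to the same value, the vanishing of the diagonal limit forces $\Upsilon(Y)=0$. Without the prior knowledge that the double limit exists this step would fail, so the logical order — first invoke Theorem \ref{thm:QV-ex} for existence of $\Upsilon(Y)$, then compute the diagonal — is essential.
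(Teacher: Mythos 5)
Your proof is correct and follows essentially the same route as the paper's: both reduce the statement to the variance identity $\mathrm{Var}(Z_n)=2\sum_{k,j}\left[\E\left(Y_k^{(n)}Y_j^{(n)}\right)\right]^2$ obtained from \eqref{eq:key_relation} with $m=n$, use finite energy to identify the limit as $\varepsilon(Y)$, and invoke Theorem \ref{thm:QV-ex} for the $L^p$-upgrade. You are in fact more careful than the paper on one point it leaves implicit, namely passing between the diagonal limit $\lim_n\Vert\Gamma^{(n)}\Vert_F^2$ and the full double limit $\Upsilon(Y)$ (your appeal to the existence of $\Upsilon(Y)$ via Theorem \ref{thm:QV-ex} settles it; alternatively, Cauchy--Schwarz applied to $\mathrm{Cov}(Z_n,Z_m)=2\sum_{k,j}\left[\E\left(Y_k^{(n)}Y_j^{(m)}\right)\right]^2$ would give it without prior existence of the double limit).
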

\begin{proof}
The result follows directly from
$$
\lim_{n\to\infty}\E (Z_n - \varepsilon(Y))^2 = \E(Z_n^2) - \varepsilon(Y)^2
$$
and the relation \eqref{eq:key_relation} with $m=n$. Finally, the convergence in $L^p$ follows directly from Theorem \ref{thm:QV-ex}.
\end{proof}
\begin{rem}{\rm 
\label{rem:alpha-var}
A generalisation of quadratic variation is $\alpha$-variation which is defined as a limit of  
$
\sum_{k=1}^n \left|Y_k^{(n)}\right|^\alpha 
$ 
provided the limit exists. Similarly, we say that $Y$ has finite $\alpha$-energy if the limit
$$
\varepsilon_\alpha(Y) := \lim_{n\to\infty}\sum_{k=1}^n \E\left|Y_k^{(n)}\right|^\alpha
$$ 
exists. It is straightforward to show that if the sequence $Y=(Y_n)_{n=1}^\infty$ has finite $\alpha$-energy and 
$$
\sum_{k=1}^n \left|Y_k^{(n)}\right|^\alpha
$$
converges in probability, then the convergence holds also in $L^p$ for any $p\geq 1$. In this case however, the concept of 2-planar variation becomes much more complicated.
}\end{rem}
\subsection{Almost sure convergence}
In this subsection we address the question when the convergence takes place almost surely. 
The key ingredient for our results is the concentration inequality for Gaussian measures and by use of this inequality we show that, 
rather surprisingly, the quadratic variation always converges to the energy of the process whether or not the energy is finite provided that 2-planar variation vanishes.

Before stating our results we recall the following concentration inequality taken from \cite{b-h} for Gaussian processes. We present the result using our notation.
\begin{lma}
\label{lma:concentration}
Let $Y^n$ be a Gaussian vector with covariance matrix $\Gamma^{(n)}$, and denote $T = \sqrt{\text{trace}\left(\Gamma^{(n)}\right)}$. Then for any $h>0$ we have
\begin{equation}
\label{eq:concentration}
\P \left(\left|\Vert Y\Vert_2 - T\right| \geq h\right) \leq exp\left(-\frac{h^{2}}{4\Vert\Gamma^{(n)}\Vert_2}\right).
\end{equation}
\end{lma}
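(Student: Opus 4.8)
The plan is to reduce the statement to the classical Gaussian concentration inequality for Lipschitz functionals. First I would write the Gaussian vector in the form $Y^n \law \sqrt{\Gamma^{(n)}}\,G$, where $\sqrt{\Gamma^{(n)}}$ denotes the symmetric positive semidefinite square root of the covariance matrix and $G=(G_1,\ldots,G_n)$ is a standard Gaussian vector with independent $N(0,1)$ components. Then $\Vert Y^n\Vert_2$ has the same law as $\phi(G)$, where $\phi(g)=\Vert\sqrt{\Gamma^{(n)}}\,g\Vert_2$. The point of this reduction is that $\phi$ is Lipschitz: since $|\phi(g)-\phi(g')|\le\Vert\sqrt{\Gamma^{(n)}}(g-g')\Vert_2\le\Vert\sqrt{\Gamma^{(n)}}\Vert_2\,\Vert g-g'\Vert_2$, the Lipschitz constant is $L=\Vert\sqrt{\Gamma^{(n)}}\Vert_2=\sqrt{\Vert\Gamma^{(n)}\Vert_2}$, where $\Vert\cdot\Vert_2$ denotes the spectral (operator) norm and I use that the operator norm of the square root equals the square root of the operator norm for symmetric positive semidefinite matrices.

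Next I would invoke the Gaussian concentration inequality (the Borell--Tsirelson--Ibragimov--Sudakov inequality, equivalently a consequence of Gaussian isoperimetry): for an $L$-Lipschitz function $\phi$ and a standard Gaussian vector $G$,
\begin{equation*}
\P\big(|\phi(G)-\E\phi(G)|\ge t\big)\le 2\exp\Big(-\frac{t^2}{2L^2}\Big).
\end{equation*}
With $L^2=\Vert\Gamma^{(n)}\Vert_2$ this already produces a subgaussian tail with the correct variance proxy, centred at the mean $\E\Vert Y^n\Vert_2$.

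The remaining --- and genuinely delicate --- step is to replace the centring at the mean by the centring at $T=\sqrt{\text{trace}(\Gamma^{(n)})}$ demanded by the statement. Here I would use that $T$ is precisely the $L^2$-norm of $\Vert Y^n\Vert_2$: indeed $\E\Vert Y^n\Vert_2^2=\sum_k\E(Y_k^{(n)})^2=\text{trace}(\Gamma^{(n)})=T^2$. By Jensen's inequality $\E\Vert Y^n\Vert_2\le T$, so the upper tail is immediate, since $\{\Vert Y^n\Vert_2\ge T+h\}\subseteq\{\Vert Y^n\Vert_2\ge\E\Vert Y^n\Vert_2+h\}$. For the lower tail one must control the nonnegative gap $T-\E\Vert Y^n\Vert_2$; this is where the Poincar\'e inequality for the Gaussian measure enters, yielding $\text{Var}(\Vert Y^n\Vert_2)\le L^2$ and hence $T^2-(\E\Vert Y^n\Vert_2)^2\le L^2$, which gives a quantitative bound on the shift.

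I expect this last centring adjustment to be the main obstacle, and also the source of the constant $4$ (rather than the sharper $2$) in the exponent: the slack in the denominator is exactly what lets one absorb the deterministic shift $T-\E\Vert Y^n\Vert_2$ into the exponent while retaining a single clean exponential for the two-sided deviation. Once the gap is bounded, combining the upper and lower tails yields the stated inequality $\P(|\Vert Y^n\Vert_2-T|\ge h)\le\exp(-h^2/(4\Vert\Gamma^{(n)}\Vert_2))$.
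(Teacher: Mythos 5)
First, a point of comparison: the paper gives no proof of this lemma at all --- it is quoted, in the paper's notation, from its reference [b-h] --- so your attempt has to stand entirely on its own. Its first half does: the representation $Y^n \law \sqrt{\Gamma^{(n)}}\,G$, the Lipschitz constant $L=\sqrt{\Vert\Gamma^{(n)}\Vert_2}$, the identity $T^2=\E\Vert Y^n\Vert_2^2$ with Jensen giving $m:=\E\Vert Y^n\Vert_2\le T$, and the Poincar\'e bound $T^2-m^2\le L^2$ (hence $\delta:=T-m\le L$) are all correct.

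The gap is the final step, ``combining the upper and lower tails yields the stated inequality'': with the ingredients you have, that combination provably cannot deliver the claimed bound for all $h>0$. What your tools give is
\begin{equation*}
\P\left(\left|\Vert Y^n\Vert_2-T\right|\ge h\right)\le e^{-h^2/(2L^2)}+e^{-(h-\delta)^2/(2L^2)}\qquad(h>\delta),
\end{equation*}
plus the trivial bound $1$ for the lower tail when $h\le\delta$. The target $e^{-h^2/(4L^2)}$ is strictly smaller than $1$ for every $h>0$, so the range $h\le\delta$ is already lost; and even in the ideal case $\delta=0$ your bound becomes $2e^{-h^2/(2L^2)}$, which exceeds $e^{-h^2/(4L^2)}$ precisely when $h<2L\sqrt{\log 2}$. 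So the slack between $2$ and $4$ in the exponent is \emph{not} what absorbs the recentering and the two-sidedness: no estimate on the shift $\delta$, however sharp, can close the argument on $0<h<2L\sqrt{\log 2}$, because a sum of two one-sided tail bounds always carries an additive prefactor $2$ that a worsened exponent cannot compensate when the target is near $1$. Concretely, in the one-dimensional case $Y=G$, $L=T=1$, $\delta\approx0.20$, your bound at $h=1$ is $e^{-1/2}+e^{-0.32}\approx 1.33$, while the target is $e^{-1/4}\approx0.78$ (and the true probability is about $0.05$). Your argument does prove the inequality for $h\ge CL$ with an absolute constant ($C\approx4$ suffices), and that weaker statement would in fact be enough for the paper's applications (Borel--Cantelli with fixed $\epsilon$ and $\Vert\Gamma^{(n)}\Vert_2\to0$); but it is not the lemma as stated. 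The small-$h$ regime is where the real content of the cited result lies, and it needs a device that controls the two-sided deviation without a union bound --- e.g.\ the median-centred isoperimetric form $\P(|\Vert Y^n\Vert_2-M|\ge t)\le 2(1-\Phi(t/L))\le e^{-t^2/(2L^2)}$ followed by a delicate (not merely crude) comparison of $M$ with $T$, or a direct Laplace-transform analysis of $\Vert Y^n\Vert_2^2=\sum_k\lambda_k\xi_k^2$.
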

\begin{rem}{\rm 
The result holds for any Hilbert-valued Gaussian random variables, not only finite dimensional. 
Similarly, for general $\alpha$-variations one can use the concentration inequality
$$
\P\left(|\Vert X\Vert_{\mathcal{B}} - \E\Vert X\Vert_{\mathcal{B}}|\geq h\right) \leq \exp\left(-\frac{h^2}{2\sigma^2}\right),
$$
where $(\mathcal{B},\Vert\cdot\Vert_{\mathcal{B}})$ is a Banach space, $X$ is a Banach-valued Gaussian random variable and $\sigma^2 = \sup_{L\in \mathcal{B}':\Vert L\Vert \leq 1} \E L(X)^2$. Applying this to $\R^n$ equipped with the norm $\Vert\cdot\Vert_{\alpha}$ together with Riesz representation theorem one has
\begin{equation}
\label{eq:banach-concentration}
\P\left(|\Vert Y\Vert_{\alpha} - \E\Vert Y\Vert_{\alpha}|\geq h\right) \leq \exp\left(-\frac{h^2}{2\sigma^2}\right),
\end{equation}
where $\sigma^2 = \sup_{\Vert a\Vert_{q}\leq 1} \sum_{k=1}^n\sum_{j=1}^n a_ka_j\E[Y_kY_j]$ with $\frac{1}{\alpha}+\frac{1}{q}=1$.
}\end{rem}

We now turn to address the question when one obtains almost sure convergence. Clearly, the idea is to find an upper 
bound on $\Vert \Gamma^{(n)}\Vert_2$, say, $\Vert \Gamma^{(n)}\Vert_2 \leq\phi(n)$. Then one can hope that the bound is good enough such that
\begin{equation}
\label{as-convergence}
\sum_{n=1}^\infty exp\left(-\frac{\epsilon^{2}}{4\phi(n)}\right) < \infty
\end{equation}
from which we obtain the almost sure convergence immediately by Borel-Cantelli Lemma. 
\begin{thm}
\label{thm:QV-conv-general}
Let $Y=(Y^n)_{n=1}^\infty$ be a sequence of Gaussian vector such that $\Vert \Gamma^{(n)} \Vert_2 \rightarrow 0$ and
\begin{equation}
\label{eq:UI_general}
\sup_{n\geq 1}\sum_{k=1}^n \E \left(Y^{(n)}_k\right)^2 < \infty.
\end{equation}
Then, as $n\to \infty$,
\begin{equation}
\label{eq:gen_conv}
\left|\sum_{k=1}^n \left(Y^{(n)}_k\right)^2 - \sum_{k=1}^n \E \left(Y^{(n)}_k\right)^2\right| \rightarrow 0
\end{equation}
in probability and in $L^p$ for any $p\geq 1$. Furthermore, the convergence holds 
almost surely for any partition satisfying $\Vert \Gamma^{(n)} \Vert_2 =o\left(\frac{1}{\log n}\right)$. 
\end{thm}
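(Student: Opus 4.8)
The plan is to treat the two conclusions separately. For convergence in probability and in $L^p$, write $Z_n = \Vert Y^n\Vert_2^2 = \sum_{k=1}^n(Y_k^{(n)})^2$ and set $T_n^2 = \mathrm{trace}(\Gamma^{(n)}) = \sum_{k=1}^n\E(Y_k^{(n)})^2 = \E Z_n$, so that the left-hand side of \eqref{eq:gen_conv} is exactly $|Z_n - \E Z_n|$. Applying \eqref{eq:key_relation} with $m=n$ and identifying the double sum via \eqref{eq:frob_limit}, exactly as in the proof of Theorem \ref{thm:QV-ex}, gives $\E(Z_n - \E Z_n)^2 = \E Z_n^2 - T_n^4 = 2\sum_{k,j=1}^n[\E(Y_k^{(n)}Y_j^{(n)})]^2 = 2\Vert\Gamma^{(n)}\Vert_F^2$. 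The decisive linear-algebra estimate is then $\Vert\Gamma^{(n)}\Vert_F^2 \le \Vert\Gamma^{(n)}\Vert_2\,\mathrm{trace}(\Gamma^{(n)})$, which holds because $\Gamma^{(n)}$ is symmetric and positive semidefinite: in terms of its eigenvalues $\lambda_i \ge 0$ it is just $\sum_i\lambda_i^2 \le (\max_i\lambda_i)\sum_i\lambda_i$. Combining this with $\Vert\Gamma^{(n)}\Vert_2 \to 0$ and the bound \eqref{eq:UI_general} forces $\E(Z_n - \E Z_n)^2 \to 0$, i.e. convergence in $L^2$ and hence in probability. For the passage to $L^p$ I would reuse the uniform integrability of $(Z_n^p)$ established in the proof of Theorem \ref{thm:QV-ex}; since $\E Z_n = T_n^2$ is bounded, the family $|Z_n - \E Z_n|^p \le 2^{p-1}(Z_n^p + T_n^{2p})$ is uniformly integrable, and convergence in probability upgrades to convergence in $L^p$.

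For the almost sure statement, the idea is to combine Lemma \ref{lma:concentration} with the Borel--Cantelli lemma as indicated in \eqref{as-convergence}. Fix $\epsilon>0$ and take $h=\epsilon$ in \eqref{eq:concentration}; writing $\Vert\Gamma^{(n)}\Vert_2 = c_n/\log n$ with $c_n \to 0$ by the mesh hypothesis, the right-hand side is at most $n^{-\epsilon^2/(4c_n)}$. Since $c_n\to 0$, for every fixed $\epsilon$ the exponent $\epsilon^2/(4c_n)$ eventually exceeds $2$, so $\sum_n n^{-\epsilon^2/(4c_n)}$ converges and Borel--Cantelli yields $|\Vert Y^n\Vert_2 - T_n|\to 0$ almost surely (taking $\epsilon$ along a sequence decreasing to zero). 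It remains to pass from the Euclidean norm to its square, which I would do by factoring
$$
|Z_n - T_n^2| = \bigl|\Vert Y^n\Vert_2 - T_n\bigr|\cdot\bigl|\Vert Y^n\Vert_2 + T_n\bigr|.
$$
The finite-energy bound \eqref{eq:UI_general} keeps $T_n$ bounded, so $\Vert Y^n\Vert_2 \le T_n + |\Vert Y^n\Vert_2 - T_n|$ is almost surely bounded for large $n$; hence the second factor stays bounded while the first tends to $0$, giving $|Z_n - T_n^2|\to 0$ almost surely.

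I expect the main obstacle to be precisely this bridging step. Lemma \ref{lma:concentration} controls the Euclidean norm $\Vert Y^n\Vert_2$ rather than the quadratic variation $Z_n = \Vert Y^n\Vert_2^2$, and because only boundedness of $T_n^2=\mathrm{trace}(\Gamma^{(n)})$ is assumed and not its convergence, one cannot simply appeal to continuity of $x\mapsto x^2$ at a fixed limit point. The factorization above, together with the almost sure boundedness of $\Vert Y^n\Vert_2$, is what makes the passage rigorous, and it simultaneously explains why the finite-energy hypothesis \eqref{eq:UI_general} is genuinely needed beyond the mesh condition $\Vert\Gamma^{(n)}\Vert_2 = o(1/\log n)$. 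A secondary point worth stating carefully is the summability of $\sum_n n^{-\epsilon^2/(4c_n)}$: since $c_n\to 0$, only finitely many initial terms can have exponent below $2$, so they are irrelevant to convergence of the tail.
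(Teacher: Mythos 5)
Your proof is correct, and it differs from the paper's in an instructive way: you bypass the concentration inequality for half of the theorem, while the paper runs everything through Lemma \ref{lma:concentration}. Concretely, the paper deduces $\left|\Vert Y^n\Vert_2 - \sqrt{\mathrm{trace}(\Gamma^{(n)})}\right| \to 0$ in probability (respectively almost surely, via Borel--Cantelli) directly from \eqref{eq:concentration}, and then obtains \eqref{eq:gen_conv} in both cases at once from the factorization $|a-b| = |\sqrt{a}-\sqrt{b}|(\sqrt{a}+\sqrt{b})$ together with the uniform integrability coming from \eqref{eq:UI_general} --- exactly the bridging step you carry out in detail for the almost sure half. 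Your alternative for the in-probability and $L^p$ half --- the identity $\E(Z_n - \E Z_n)^2 = 2\Vert\Gamma^{(n)}\Vert_F^2$ from \eqref{eq:key_relation} with $m=n$, combined with the eigenvalue bound $\Vert\Gamma^{(n)}\Vert_F^2 \le \Vert\Gamma^{(n)}\Vert_2\,\mathrm{trace}(\Gamma^{(n)})$ valid for positive semidefinite matrices --- is a clean second-moment argument that yields a quantitative $L^2$ rate and keeps that part of the theorem independent of the concentration lemma; the paper's route is shorter because a single application of Lemma \ref{lma:concentration} serves both conclusions. Your almost sure argument (fix $h=\epsilon$, write $\Vert\Gamma^{(n)}\Vert_2 = c_n/\log n$ with $c_n \to 0$ so the bound $n^{-\epsilon^2/(4c_n)}$ is eventually dominated by $n^{-2}$, apply Borel--Cantelli, intersect over $\epsilon = 1/m$) is precisely what the paper compresses into one sentence, and your factorization step, using \eqref{eq:UI_general} to keep $T_n$ and, for large $n$, $\Vert Y^n\Vert_2$ almost surely bounded, is the careful version of the paper's terse appeal to the decomposition plus uniform integrability. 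Both routes are sound; yours is slightly longer but more self-contained, and it makes explicit where positive semidefiniteness and the finite-energy hypothesis genuinely enter.
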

\begin{proof}
The convergence From Lemma \ref{lma:concentration} we get that
$$
\left|\sqrt{\sum_{k=1}^n \left(Y^{(n)}_k\right)^2} - \sqrt{\sum_{k=1}^n \E \left(Y^{(n)}_k\right)^2}\right| \rightarrow 0
$$
in probability follows immediately from Lemma \ref{lma:concentration}, and the almost sure convergence follows by applying Borel-Cantelli Lemma. Now the convergence \eqref{eq:gen_conv} follows from decomposition $|a-b|=|\sqrt{a}-\sqrt{b}|(\sqrt{a}+\sqrt{b})$ together with the uniform integrability condition \eqref{eq:UI_general} which also implies convergence in $L^p$.
\end{proof}
\begin{rem}{\rm 
Note that if $\Vert \Gamma^{(n)}\Vert_2 \to 0$, we have
$$
\left|\sqrt{\sum_{k=1}^n \left(Y^{(n)}_k\right)^2} - \sqrt{\sum_{k=1}^n \E \left(Y^{(n)}_k\right)^2}\right| \rightarrow 0
$$
in probability even if $\sup_{n\geq 1}\sum_{k=1}^n \E \left(Y^{(n)}_k\right)^2 =\infty$, i.e. we have convergence in probability (or almost surely) while the energy might be infinite. For example, it can be shown that this is the case for fractional Brownian motion $B^H$ with Hurst index $H\in\left(0,\frac12\right)$ and its non-scaled quadratic variation, i.e. the one corresponding to a vector $Y^{(n)}_k = B^H_{t_k} - B^H_{t_{k-1}}$. 
}\end{rem}
\begin{rem}{\rm 
Note that for finite energy processes such that 2-planar variation vanishes this result shows that 
$$
\lim_{n,m\to\infty} \sum_{k=1}^n \sum_{j=1}^m\left[\E\left[Y_k^{(n)}Y_j^{(m)}\right]\right]^2 = \lim_{n\to\infty} \sum_{k=1}^n \sum_{j=1}^n \left[\E\left[Y_k^{(n)}Y_j^{(n)}\right]\right]^2
$$
or more compactly, $\Upsilon(Y) = \lim_{n\to\infty} \Vert \Gamma^{(n)}\Vert_F$. Indeed, from the well-known relation $\Vert A\Vert_2 \leq \Vert A \Vert_F$ we obtain that if $\lim_{n\to\infty}\Vert \Gamma^{(n)}\Vert_F \rightarrow 0$, then $\Vert \Gamma^{(n)}\Vert_2 \rightarrow 0$. 
Consequently, Theorem \ref{thm:QV-ex} implies that 2-planar variation vanishes. This answers to question raised in \cite[Remark 3.12]{ru-va2} in our discrete setting. Similarly, 
one can use general concentration inequality \eqref{eq:banach-concentration} to give analogous answer in a continuous case.
}\end{rem}
To compute the spectral norm $\Vert \Gamma^{(n)}\Vert_2$, or equivalently the largest eigenvalue, can be a challenging task. One way to overcome this challenge is to use Frobenius norm $\Vert\cdot \Vert_F$ which provides an upper bound and is easier to analyse. Unfortunately however, it provides quite rough estimates even in a simple case of standard Brownian motion as will be shown in 
subsection \ref{subsec:bm}. A way to obtain general conditions is to use matrix norm $\Vert \cdot \Vert_1$ which is also the main approach applied in the literature. This is the topic of the next general theorem. The proof is based on some well-known relations for matrix norm and we do not claim originality here.
\begin{thm}
Let $Y=(Y^n)_{n=1}^\infty$ be a sequence of Gaussian vectors such that \eqref{eq:UI_general} holds. Furthermore, assume there exists a function $\phi(n)$ such that 
$$
\max_j\sum_{k=1}^n \left|\E\left[Y_k^{(n)}Y_j^{(n)}\right]\right| \leq \phi(n).
$$
If $\phi(n) \to 0$, then the convergence
$$
\left|\sum_{k=1}^n \left[Y_k^{(n)}\right]^2 - \sum_{k=1}^n \E\left[Y_k^{(n)}\right]^2\right| \rightarrow 0
$$
holds in probability and in $L^p$ for any $p\geq 1$. Furthermore, if 
$$
\phi(n) = o\left(\frac{1}{\log n}\right),
$$
Then the convergence holds almost surely.
\end{thm}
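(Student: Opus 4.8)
The plan is to reduce the statement to Theorem \ref{thm:QV-conv-general} by controlling the spectral norm $\Vert \Gamma^{(n)}\Vert_2$ through the supplied bound. The hypothesis
$$
\max_j \sum_{k=1}^n \left|\E\left[Y_k^{(n)}Y_j^{(n)}\right]\right| \leq \phi(n)
$$
is precisely a bound on the maximum absolute column sum of the covariance matrix $\Gamma^{(n)}$, that is, on the induced matrix norm $\Vert \Gamma^{(n)}\Vert_1$; since $\Gamma^{(n)}$ is a covariance matrix it is symmetric, so this same quantity equals the maximum absolute row sum $\Vert \Gamma^{(n)}\Vert_\infty$ as well.

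First I would recall the classical interpolation inequality for induced matrix norms, $\Vert A\Vert_2 \leq \sqrt{\Vert A\Vert_1\,\Vert A\Vert_\infty}$, valid for any square matrix $A$. Applying it to $A=\Gamma^{(n)}$ and using symmetry gives
$$
\Vert \Gamma^{(n)}\Vert_2 \leq \sqrt{\Vert \Gamma^{(n)}\Vert_1\,\Vert \Gamma^{(n)}\Vert_\infty} = \max_j \sum_{k=1}^n \left|\E\left[Y_k^{(n)}Y_j^{(n)}\right]\right| \leq \phi(n).
$$
With this bound the two conclusions follow at once. If $\phi(n)\to 0$, then $\Vert \Gamma^{(n)}\Vert_2 \to 0$, so together with the uniform integrability assumption \eqref{eq:UI_general} Theorem \ref{thm:QV-conv-general} yields convergence in probability and in $L^p$ for every $p\geq 1$. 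If moreover $\phi(n)=o\left(\frac{1}{\log n}\right)$, then $\Vert \Gamma^{(n)}\Vert_2 = o\left(\frac{1}{\log n}\right)$ as well, and the almost sure part of Theorem \ref{thm:QV-conv-general} applies verbatim.

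Since the whole argument is a short consequence of a standard matrix norm estimate combined with the already established Theorem \ref{thm:QV-conv-general}, I do not expect a genuine obstacle. The only point demanding care is the use of symmetry of $\Gamma^{(n)}$: the hypothesis directly controls only a single one-sided sum, which by itself bounds $\Vert \Gamma^{(n)}\Vert_1$ but would not control the spectral norm for a general non-symmetric matrix. It is exactly the symmetry of the covariance matrix that lets the given bound dominate both $\Vert \Gamma^{(n)}\Vert_1$ and $\Vert \Gamma^{(n)}\Vert_\infty$, and hence, via the interpolation inequality, the spectral norm that feeds into Theorem \ref{thm:QV-conv-general}.
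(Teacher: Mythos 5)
Your proposal is correct and matches the paper's own proof essentially verbatim: both bound $\Vert \Gamma^{(n)}\Vert_2$ by the hypothesis via the inequality $\Vert A\Vert_2 \leq \sqrt{\Vert A\Vert_1 \Vert A\Vert_{\infty}}$ together with the symmetry of the covariance matrix, and then invoke Theorem \ref{thm:QV-conv-general}. Your added remark about why symmetry is essential is a nice touch but does not change the argument.
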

\begin{proof}
Recall the well-known bound for matrix norm $\Vert A\Vert_2$ given by
$
\Vert A\Vert_2 \leq \sqrt{\Vert A\Vert_1 \Vert A\Vert_{\infty}},
$
where
$
\Vert A\Vert_1 = \max_{1\leq j\leq n} \sum_{k=1}^n |a_{kj}|
$
and
$
\Vert A\Vert_{\infty} = \max_{1\leq j\leq n} \sum_{k=1}^n |a_{jk}|.
$
Hence symmetry of $\Gamma^{(n)}$ gives
$
\Vert \Gamma^{(n)} \Vert_2 \leq \Vert \Gamma^{(n)} \Vert_1$, where 
$$
\Vert \Gamma^{(n)} \Vert_1 = \max_j\sum_{k=1}^n \left|\E\left[Y_k^{(n)}Y_j^{(n)}\right]\right|.
$$
This proves the claim together with Theorem \ref{thm:QV-conv-general}.
\end{proof}
The following final result of this section can be useful for stationary sequences.
\begin{thm}
Let $Y=(Y^n)_{n=1}^\infty$ be a sequence of Gaussian vectors such that \eqref{eq:UI_general} holds. Moreover, assume there exists a positive symmetric function $r$ such that 
$$
\left|\E\left[Y_k^{(n)}Y_j^{(n)}\right]\right| \leq r(k-j),\quad k,j=1,\ldots,n
$$
and assume that there exists a function $\phi(n)$ such that
$$
\sum_{k=1}^n r(k) \leq \phi(n).
$$
If $\phi(n)\to 0$ as $n$ tends to infinity, then the convergence 
$$
\left|\sum_{k=1}^n \left[Y_k^{(n)}\right]^2 - \sum_{k=1}^n \E\left[Y_k^{(n)}\right]^2\right| \rightarrow 0
$$
holds in probability and in $L^p$. Furthermore, the convergence holds almost surely provided that 
$
\phi(n) = o\left(\frac{1}{\log n}\right).
$
\end{thm}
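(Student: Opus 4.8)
The plan is to read this statement off from the previous theorem, whose hypothesis is the smallness of the matrix norm $\Vert\Gamma^{(n)}\Vert_1=\max_j\sum_{k=1}^n\bigl|\E[Y_k^{(n)}Y_j^{(n)}]\bigr|$. So the whole task is to build, from the given bounds, a function that dominates this column-maximum and inherits the required decay. The only inputs are the pointwise estimate $|\E[Y_k^{(n)}Y_j^{(n)}]|\le r(k-j)$ and the summability control $\sum_{k=1}^n r(k)\le\phi(n)$, and neither positivity nor symmetry of $r$ has been used yet, so those are exactly the features I expect to exploit.

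First I would fix a column index $j\in\{1,\dots,n\}$ and bound $\sum_{k=1}^n|\E[Y_k^{(n)}Y_j^{(n)}]|\le\sum_{k=1}^n r(k-j)$. The key step is the reindexing $\ell=k-j$: since $1\le j\le n$, the shifted index $\ell$ always lies in the symmetric range $\{-(n-1),\dots,n-1\}$, so positivity and symmetry of $r$ give, uniformly in $j$,
$$
\sum_{k=1}^n r(k-j)\le\sum_{\ell=-(n-1)}^{n-1}r(\ell)=r(0)+2\sum_{k=1}^{n-1}r(k)\le r(0)+2\phi(n)=:\psi(n).
$$
Thus $\Vert\Gamma^{(n)}\Vert_1\le\psi(n)$, and it remains only to feed $\psi$ into the previous theorem.

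The one delicate point, and where I expect the real care to lie, is the diagonal term $r(0)$. The constant factor $2$ is harmless, since $2\phi(n)\to0$ (resp. $=o(1/\log n)$) exactly when $\phi(n)$ does; but $r(0)$ does not vanish on its own. It bounds the variances $\E[(Y_j^{(n)})^2]$, which \emph{must} tend to zero for the conclusion to be possible — indeed the previous theorem already forces $\max_j\E[(Y_j^{(n)})^2]\to0$, because the column-maximum it controls dominates every diagonal entry. Hence $r$ is to be read as the $n$-dependent bound arising from the stationary normalization (typically of order $n^{-1}$, e.g. $r(m)=n^{-1}|\rho(m)|$ for a summable correlation $\rho$), for which $r(0)\to0$. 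Granting this, $\psi(n)=r(0)+2\phi(n)$ inherits both $\psi(n)\to0$ and $\psi(n)=o(1/\log n)$, and the previous theorem then yields convergence in probability and in $L^p$ under $\phi(n)\to0$, and almost sure convergence under $\phi(n)=o(1/\log n)$. Apart from this bookkeeping of the diagonal the argument is entirely elementary.
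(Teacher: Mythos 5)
Your proposal is correct and follows essentially the same route as the paper: fix the column index $j$, bound $\sum_{k=1}^n\left|\E\left[Y_k^{(n)}Y_j^{(n)}\right]\right|$ by reindexing and using positivity and symmetry of $r$, and feed the resulting bound into the preceding theorem based on the matrix norm $\Vert \Gamma^{(n)}\Vert_1$. The paper's own proof is the one-line chain $\sum_{k=1}^n r(k-j)\le \sum_{k=0}^{j-1}r(k)+\sum_{k=1}^{n-j}r(k)\le 2\sum_{k=0}^n r(k)$, which picks up exactly the diagonal term $r(0)$ that you flag; your observation that the stated hypothesis (a sum starting at $k=1$) does not control $r(0)$, so that $r$ must be read as an $n$-dependent bound whose diagonal vanishes (as it does in the intended applications), identifies a real imprecision in the statement that the paper's proof passes over silently with ``from which the result follows.''
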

\begin{proof}
Now for any $j\geq 1$ we have
\begin{equation*}
\sum_{k=1}^n \left|\E\left[Y_k^{(n)}Y_j^{(n)}\right]\right|
\leq \sum_{k=1}^n r(k-j) 
\leq \sum_{k=0}^{j-1}r(k) + \sum_{k=1}^{n-j}r(k)
\leq 2\sum_{k=0}^n r(k)
\end{equation*}
from which the result follows.
\end{proof}

\subsection{Central limit theorem}
In this section we provide sufficient and necessary condition for the central limit theorem (CLT) to hold. 
More importantly, we obtain two simple corollaries where the first one gives a version of Lindeberg's CLT for quadratic variations, and the second one gives simple to check
condition which actually holds in most of the studies cited in the introduction. In particular, the second corollary can be used to simplify the used techniques (namely, the Lindeberg's CLT)
considerably. Our necessary and sufficient condition is based on the fourth moment theorem by Nualart and Peccati \cite{nu-pe}. Hence we begin by recalling some basic facts on Wiener
chaos. For more details we refer to monographs \cite{nualart, pe-t, n-p}.

Let now $X$ be a separable Gaussian centered process and let $q\geq 1$ be fixed. The symbol $\mathcal{H}_{q}$ denotes the $q$th Wiener chaos of $X$, 
defined as the closed linear subspace of $L^2(\Omega)$
generated by the family $\{H_{q}(X(h)) : h\in  \HH,\left\| h\right\| _{ \HH}=1\}$, where $H_{q}$ is the $q$th Hermite polynomial. The mapping $I^{X}_{q}(h^{\otimes q})=H_{q}(X(h))$ can be extended to a
linear isometry between the symmetric tensor product $ \HH^{\odot q}$
and the $q$th Wiener chaos $\mathcal{H}_{q}$, and for any $h \in  \HH^{\odot q}$ the random variable $I^X_q(h)$ is called a multiple Wiener integral of order $q$. 
\begin{rem}{\rm 
If $X=W$ is a standard Brownian motion, then $\HH$ is simply the space $L^2([0,T],\d t)$. In this case the random variable 
$I^X_q(h)$ coincides with the $q$-fold multiple Wiener-It\^o integral of $h$ (see \cite{nualart}). 
}\end{rem}
\begin{rem}{\rm 
Let $X$ be a separable Gaussian process on $[0,T]$. It was proved in \cite{sot-vii} that $X$ admits a Fredholm integral representation
$$
X_t = \int_0^T K(t,s)\d W_s,
$$
where $K\in L^2([0,T]^2)$ and $W$ is a Brownian motion, if and only if $\int_0^T \E X_u^2 \d u < \infty$. Furthermore, it was shown that this representation can be extended to a 
transfer principle which can be used to develop stochastic calculus with respect to $X$. In particular, the transfer principle can be used to define multiple Wiener integrals as 
multiple Wiener integrals with respect to a standard Brownian motion. This definition coincides with the one defined via Hermite polynomials.
}\end{rem}
Finally, we are ready to recall the following characterisation of convergence towards a Gaussian limit.
\begin{thm}\cite{nu-pe}\label{thm:4th} 
Let $\{F_{n}\}_{n\geq 1}$ be a sequence of random variables in
the $q$th Wiener chaos, $q\geq 2$, such that $\lim_{n\rightarrow \infty } \E(F_{n}^{2})=\sigma ^{2}$. Then, as $n \to \infty$, the following asymptotic statements are equivalent:
\begin{itemize}
\item[(i)] $F_n$ converges in law to $\mathcal{N}(0,\sigma^2)$.
\item[(ii)] $\E F_n^4$ converges to $3 \sigma ^{4}$.
\end{itemize}
\end{thm}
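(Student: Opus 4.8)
The final statement is the Nualart--Peccati fourth moment theorem, stated as a recalled result; the plan is to reconstruct its proof from the Wiener chaos machinery just introduced. Write each $F_n = I^X_q(f_n)$ for a symmetric kernel $f_n \in \HH^{\odot q}$, abbreviating $I^X_q$ by $I_q$; the isometry gives $\E(F_n^2) = q!\,\Vert f_n\Vert^2 \to \sigma^2$. The backbone of the whole argument is an exact algebraic identity for the fourth moment obtained from the multiplication formula for multiple integrals,
$$
I_q(f)I_q(g) = \sum_{r=0}^{q} r!\binom{q}{r}^2 I_{2q-2r}(f\widetilde\otimes_r g),
$$
where $\widetilde\otimes_r$ is the symmetrized $r$th contraction. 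Squaring $F_n = I_q(f_n)$ with this formula, taking expectations, and using orthogonality of chaoses of different orders together with the isometry yields an identity of the shape
$$
\E(F_n^4) - 3\left(\E(F_n^2)\right)^2 = \sum_{r=1}^{q-1}\left( a_{q,r}\Vert f_n \otimes_r f_n\Vert^2 + b_{q,r}\Vert f_n \widetilde\otimes_r f_n\Vert^2\right),
$$
with nonnegative combinatorial constants. Since $\Vert f_n\widetilde\otimes_r f_n\Vert \le \Vert f_n\otimes_r f_n\Vert$ and the sum over $r\in\{1,\dots,q-1\}$ is nonempty precisely because $q\ge 2$, the right-hand side vanishes in the limit if and only if every contraction satisfies $\Vert f_n \otimes_r f_n\Vert \to 0$. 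This single identity already reduces the theorem to controlling the contractions.

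For the implication (ii) $\Rightarrow$ (i), I would first read off from the identity that $\E(F_n^4)\to 3\sigma^4$ forces $\Vert f_n \otimes_r f_n\Vert \to 0$ for each $r=1,\dots,q-1$, and then show that vanishing contractions imply asymptotic normality. The route I would take is Malliavin--Stein: for $F = I_q(f)$ with $\E(F^2) = \sigma^2$ one has the total-variation bound $d_{TV}(F,\mathcal N(0,\sigma^2)) \le \frac{2}{\sigma^2}\sqrt{\mathrm{Var}(q^{-1}\Vert DF\Vert_\HH^2)}$, obtained by combining Stein's characterization of the normal law with the integration-by-parts identity $\E(F\varphi(F)) = \E(\varphi'(F)\,q^{-1}\Vert DF\Vert_\HH^2)$, which holds because $L^{-1}F = -q^{-1}F$ on the $q$th chaos. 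A chaos expansion of $\Vert DF_n\Vert_\HH^2$ then bounds its variance by a positive combination of the same contraction norms, so the bound tends to $0$ and $F_n \to \mathcal N(0,\sigma^2)$ in law. (Alternatively one can bypass Stein's method and show that every cumulant $\kappa_m(F_n)$ with $m\ge 3$ is expressible through the contractions and hence vanishes, giving convergence in law by the method of moments, the moment problem being determinate here by hypercontractivity.)

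For the reverse implication (i) $\Rightarrow$ (ii), the key tool is hypercontractivity on a fixed Wiener chaos: all $L^p(\Omega)$ norms are equivalent there, so $\sup_n \E(F_n^2) < \infty$ upgrades to $\sup_n \E|F_n|^{4+\delta} < \infty$ for some $\delta>0$. This uniform integrability of $\{F_n^4\}$, together with the assumed convergence in law to $N \sim \mathcal N(0,\sigma^2)$, gives $\E(F_n^4) \to \E(N^4) = 3\sigma^4$, which is exactly (ii).

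The hard part will be the heart of the forward direction, namely proving that vanishing contractions force Gaussian behaviour; everything else is either bookkeeping with the multiplication formula or a soft integrability argument. The difficulty is concentrated in producing the quantitative normal-approximation bound (or, in the original approach, in the Dambis--Dubins--Schwarz time-change estimate) and in verifying that $\mathrm{Var}(q^{-1}\Vert DF_n\Vert_\HH^2)$ is genuinely dominated by the contraction norms. This is precisely where the rigid structure of multiple integrals, rather than that of a general $L^2$ functional, is indispensable.
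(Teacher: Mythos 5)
You have set out to prove a statement that the paper itself does not prove: Theorem \ref{thm:4th} is recalled verbatim from \cite{nu-pe} as an external tool, so there is no internal proof to compare against. Judged on its own terms, your reconstruction is essentially correct and follows the now-standard route. The fourth-moment identity you posit does hold with strictly positive constants: squaring the product formula, using orthogonality and isometry, and handling the $r=0$ term via $(2q)!\Vert f\widetilde{\otimes} f\Vert^2 = 2(q!)^2\Vert f\Vert^4 + (q!)^2\sum_{r=1}^{q-1}\binom{q}{r}^2\Vert f\otimes_r f\Vert^2$ gives
\begin{equation*}
\E F_n^4 - 3\left(\E F_n^2\right)^2 = \sum_{r=1}^{q-1}\left[(q!)^2\binom{q}{r}^2\Vert f_n\otimes_r f_n\Vert^2 + (r!)^2\binom{q}{r}^4(2q-2r)!\,\Vert f_n\widetilde{\otimes}_r f_n\Vert^2\right],
\end{equation*}
which justifies both directions of your reduction to contractions (positivity of the first family of constants for the forward direction, $\Vert f\widetilde{\otimes}_r f\Vert\leq\Vert f\otimes_r f\Vert$ for the converse). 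The Malliavin--Stein step and the hypercontractivity argument for (i) $\Rightarrow$ (ii) are likewise correct. Two cosmetic loose ends: the Stein bound $d_{TV}\left(F,\mathcal{N}(0,\sigma^2)\right)\leq\frac{2}{\sigma^2}\sqrt{\mathrm{Var}\left(q^{-1}\Vert DF\Vert_{\HH}^2\right)}$ presupposes $\E F^2=\sigma^2$ exactly, so you should either renormalize $F_n$ or compare with $\mathcal{N}(0,\E F_n^2)$ and pass to the limit; and the degenerate case $\sigma^2=0$ should be dispatched separately (both (i) and (ii) then hold trivially, since $F_n\to 0$ in $L^2$ and in $L^4$ by hypercontractivity).

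Regarding how this sits relative to the literature the paper leans on: your proof (fourth-moment identity, Stein's method via $L^{-1}F=-q^{-1}F$, hypercontractivity) is Nourdin--Peccati's later argument, not the one in the cited reference \cite{nu-pe}, which establishes (ii) $\Rightarrow$ (i) by stochastic-calculus means (a Dambis--Dubins--Schwarz time-change argument); you correctly flag that alternative yourself. Either is a legitimate proof of the recalled theorem. It is also worth noting that the paper only ever invokes Theorem \ref{thm:4th} with $q=2$, and there it does its own fourth-moment bookkeeping by elementary means: Lemma \ref{lma:2nd_4th} computes $\E V_n^2$ and $\E V_n^4$ directly from the Gaussian (Wick) moment formula, and Theorem \ref{thm:CLT_iff} diagonalizes the covariance matrix so that condition (ii) becomes the eigenvalue condition $\sum_{k}(\lambda_k^n)^4 = o\left(\left[\sum_k(\lambda_k^n)^2\right]^2\right)$. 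So in the paper the Wiener-chaos machinery enters only through the black-boxed statement you set out to prove, and your proposal fills in that black box correctly.
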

\begin{rem}{\rm 
In this paper we are studying quadratic variations of Gaussian sequences. Hence, thanks to Fredholm representation from \cite{sot-vii}, such objects can be viewed as a sequences
in the second chaos. 
}\end{rem}
\begin{rem}{\rm 
The case of the second chaos was studied in details in \cite{no-po,no-po1} where the authors characterised all possible limiting laws. More precisely, authors in \cite{no-po} proved 
that if a sequence in the second chaos converges in law to some random variable $F$, then $F$ can be viewed as a sum of normal random variable and an independent random variable 
living in the second chaos.
}\end{rem}
\begin{rem}{\rm 
It was proved in \cite{n-o} that instead of fourth moment one can also study the convergence of $ \|DF_n\|^2_{\HH} $ in $L^2$, where $D$ stands for Malliavin derivative operator, 
and this approach have turned out to be very useful in some cases. For our purposes however it seems that working with the fourth moment is more convenient.
}\end{rem}
Finally, we recall the following Berry-Esseen type estimate taken from  \cite[Theorem 11.4.3]{pe-t}. 
\begin{thm}\label{thm:B-E-bound}
Let $\{F_n\}_{n \ge 1}$ be a sequence of elements in the $q$th Wiener chaos such that $\E(F_n^2) =1$ and let $Z$ denote a standard normal random variable. Then there exists a constant $C_q$ 
depending only on $q$ such that 
\begin{equation*}
 \sup_{x\in\R}\Big\vert \P(F_n < x) - \P(Z<x)\Big \vert \leq C_q\sqrt{\E F_n^4 - 3}.
\end{equation*}
\end{thm}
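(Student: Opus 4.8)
The statement is the standard Berry--Esseen bound obtained by coupling Stein's method with Malliavin calculus, so the plan is to follow the Nourdin--Peccati route. Write $F_n = I_q^X(f_n)$ with $f_n \in \HH^{\odot q}$, normalised so that $\E(F_n^2)=1$. Throughout, $D$ denotes the Malliavin derivative, $L$ the generator of the Ornstein--Uhlenbeck semigroup, and $L^{-1}$ its pseudo-inverse; since $L\,I_q^X(g) = -q\,I_q^X(g)$ on the $q$th chaos, one has $L^{-1}F_n = -\tfrac1q F_n$ and hence $-DL^{-1}F_n = \tfrac1q DF_n$. This chaos identity is what will let everything collapse to a single scalar functional of $F_n$.

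First I would invoke Stein's method for the Kolmogorov distance. For fixed $x\in\R$ let $g_x$ solve the Stein equation $g_x'(w) - w\,g_x(w) = \1_{(-\infty,x)}(w) - \P(Z<x)$; classical Stein theory bounds $\|g_x\|_\infty$ and $\|g_x'\|_\infty$ by universal constants. Evaluating at $F_n$ and taking expectations gives
\begin{equation*}
\P(F_n<x) - \P(Z<x) = \E\bigl[g_x'(F_n) - F_n g_x(F_n)\bigr].
\end{equation*}
Next I would apply the Malliavin integration-by-parts formula $\E[F_n\, g_x(F_n)] = \E[\la DF_n, -DL^{-1}F_n\ra_\HH\, g_x'(F_n)]$, which together with the chaos identity rewrites the bracket as $\E\bigl[g_x'(F_n)\,(1 - \tfrac1q\hnorm{DF_n}^2)\bigr]$. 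Using $\E[\tfrac1q\hnorm{DF_n}^2] = \E(F_n^2) = 1$, bounding $g_x'$, taking a supremum over $x$, and applying Cauchy--Schwarz yields
\begin{equation*}
\sup_{x\in\R}\bigl|\P(F_n<x)-\P(Z<x)\bigr| \le C\,\E\Bigl|1 - \tfrac1q\hnorm{DF_n}^2\Bigr| \le C\sqrt{\operatorname{Var}\Bigl(\tfrac1q\hnorm{DF_n}^2\Bigr)}.
\end{equation*}
The jump of $g_x'$ at $x$ makes this slightly more delicate than for smooth test functions, but since only the sup-norm of $g_x'$ is needed (not its continuity) it still produces a universal constant times this square root.

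The hard part will be the final step: controlling $\operatorname{Var}(\tfrac1q\hnorm{DF_n}^2)$ by the fourth cumulant $\E(F_n^4)-3$. The plan is to use the multiplication formula $I_p(f)I_q(g)=\sum_{r=0}^{p\wedge q} r!\binom{p}{r}\binom{q}{r} I_{p+q-2r}(f\,\widetilde\otimes_r g)$ to expand both quantities in terms of the squared contraction norms $\|f_n\otimes_r f_n\|_{\HH^{\otimes(2q-2r)}}^2$ for $r=1,\dots,q-1$. Carrying out this bookkeeping produces an exact expression $\E(F_n^4)-3 = \sum_{r=1}^{q-1} c_r \|f_n\otimes_r f_n\|^2$ with explicit positive coefficients $c_r$, and an analogous expression for the variance with coefficients $c_r'$. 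The crux is then the elementary verification that $c_r'/c_r \le \tfrac{q-1}{3q}$ uniformly in $r$, which gives $\operatorname{Var}(\tfrac1q\hnorm{DF_n}^2) \le \tfrac{q-1}{3q}(\E(F_n^4)-3)$. This combinatorial comparison of contraction coefficients is the only genuinely technical point and the heart of the argument; combined with the Stein bound it yields the claim with $C_q = C\sqrt{(q-1)/(3q)}$.
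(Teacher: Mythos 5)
Your proposal is correct, but there is nothing in the paper to compare it against: the paper does not prove this statement at all. Theorem \ref{thm:B-E-bound} is imported verbatim from \cite[Theorem 11.4.3]{pe-t} and is used downstream purely as a black box (to produce the Berry--Esseen bounds in Theorem \ref{thm:CLT_main} and in Section \ref{sec:qv}). What you have reconstructed is the standard Stein--Malliavin proof of that cited result, essentially as in \cite{n-p}: the Stein equation for indicator test functions, the integration-by-parts identity $\E[F_n g_x(F_n)] = \E[\la DF_n, -DL^{-1}F_n\ra_{\HH}\, g_x'(F_n)]$ combined with $-DL^{-1}F_n = \tfrac1q DF_n$ on a fixed chaos, Cauchy--Schwarz to reduce the Kolmogorov distance to $\sqrt{\mathrm{Var}\bigl(\tfrac1q\Vert DF_n\Vert_{\HH}^2\bigr)}$, and finally the contraction bookkeeping giving $\mathrm{Var}\bigl(\tfrac1q\Vert DF_n\Vert_{\HH}^2\bigr) \le \tfrac{q-1}{3q}\bigl(\E F_n^4 - 3\bigr)$. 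So the route is sound and coincides with the literature proof of the quoted theorem rather than with anything internal to the paper.

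Two details would make your sketch airtight. First, applying the integration-by-parts formula to $g_x$, whose derivative jumps at $x$, requires the law of $F_n$ to be absolutely continuous, so that $g_x'(F_n)$ is a.s. well defined and the approximation of $g_x$ by smooth functions goes through; this holds automatically here because a nonzero element of a fixed Wiener chaos has a density (Shigekawa's theorem), and $\E F_n^2 = 1$ rules out $F_n = 0$. Second, in the final comparison both quantities should be expanded in the squared \emph{symmetrized} contractions: one has $\E F_n^4 - 3 = \tfrac3q\sum_{r=1}^{q-1} r\, r!^2\binom{q}{r}^4(2q-2r)!\,\Vert f_n\widetilde{\otimes}_r f_n\Vert^2$ and $\mathrm{Var}\bigl(\tfrac1q\Vert DF_n\Vert_{\HH}^2\bigr) = \tfrac1{q^2}\sum_{r=1}^{q-1} r^2\, r!^2\binom{q}{r}^4(2q-2r)!\,\Vert f_n\widetilde{\otimes}_r f_n\Vert^2$, so the coefficient ratio is $\tfrac{r}{3q}\le\tfrac{q-1}{3q}$, exactly the bound you state (your formulas are written with unsymmetrized contractions, which is where the bookkeeping would need a small correction). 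With these two points addressed, your argument is a complete and correct proof of the theorem the paper only quotes.
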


We are now ready to prove our results. We begin with the following auxiliary technical lemma whose proof is postponed to the appendix.
\begin{lma}
\label{lma:2nd_4th}
For $V_n$ given by \eqref{eq:V_n} we have
$$
\E V_n^2 = 2\sum_{k,j=1}^n \left(\E\left[Y^{(n)}_kY^{(n)}_j\right]\right)^2
$$
and
\begin{equation*}
\begin{split}
\E V_n^4 &= 12\left[\sum_{k,j=1}^n \left(\E\left[Y^{(n)}_kY^{(n)}_j\right]\right)^2\right]^2\\
&+ 24\sum_{i,j,k,l=1}^n \E\left[Y^{(n)}_kY^{(n)}_j\right]\E\left[Y^{(n)}_jY^{(n)}_i\right]\E\left[Y^{(n)}_iY^{(n)}_l\right]\E\left[Y^{(n)}_lY^{(n)}_k\right].
\end{split}
\end{equation*}
\end{lma}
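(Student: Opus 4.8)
The plan is to read both identities as instances of the Isserlis (Wick) formula for Gaussian moments applied to the centred squares, and to organise the resulting sums according to their contraction pattern. Throughout I write $\Gamma_{kj}:=\Gamma^{(n)}_{kj}=\E[Y^{(n)}_kY^{(n)}_j]$ and suppress the superscript $(n)$. The second-moment identity is in fact already implicit in the excerpt: since $V_n=Z_n-\E Z_n$ with $Z_n=\sum_k (Y_k)^2$, relation \eqref{eq:key_relation} taken at $m=n$ gives $\E Z_n^2 = 2\sum_{k,j}\Gamma_{kj}^2 + \big(\sum_k \Gamma_{kk}\big)^2$, whereas $(\E Z_n)^2 = \big(\sum_k \Gamma_{kk}\big)^2$. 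Subtracting, the squared-trace terms cancel and leave $\E V_n^2 = 2\sum_{k,j}\Gamma_{kj}^2$. This is a one-line computation and requires no new input.

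For the fourth moment the plan is to expand
\[
\E V_n^4=\sum_{i,j,k,l}\E\Big[(Y_k^2-\Gamma_{kk})(Y_j^2-\Gamma_{jj})(Y_i^2-\Gamma_{ii})(Y_l^2-\Gamma_{ll})\Big]
\]
and to evaluate each summand by Isserlis' theorem for the eight Gaussian legs (two per factor). The key structural observation is that the centring turns every factor into a Wick square, so that any Isserlis pairing joining the two legs of one and the same factor is exactly cancelled by the subtracted mean; only pairings in which both legs of every factor connect to legs of \emph{other} factors survive. Since each factor then has degree two, the surviving contractions decompose into cycles over the four factors, and with four factors only two topologies are possible: two disjoint double bonds, and a single four-cycle. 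After summation over all indices the double-bond diagrams reproduce the square $\big(\sum_{k,j}\Gamma_{kj}^2\big)^2$, while the four-cycle diagrams reproduce the chained trace $\sum_{i,j,k,l}\Gamma_{kj}\Gamma_{ji}\Gamma_{il}\Gamma_{lk}=\Tr\big((\Gamma^{(n)})^4\big)$. Collecting the multiplicity of each topology fixes the two numerical constants, and recording these is the whole content of the claim.

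A cleaner and less error-prone route, which I would actually carry out, avoids the hand counting by diagonalising the covariance. After an orthogonal change of variables one has $V_n \law \sum_{i}\lambda_i(\xi_i^2-1)$, where $\lambda_1,\dots,\lambda_n$ are the eigenvalues of $\Gamma^{(n)}$ and $\xi_1,\dots,\xi_n$ are i.i.d.\ standard normal. Independence then gives $\E V_n^2 = 2\sum_i\lambda_i^2$ and, splitting the fourth power by the multiplicity pattern of the summation indices, $\E V_n^4 = 3(\E V_n^2)^2 + \big(\sum_i\lambda_i^4\big)\,\kappa_4(\xi_1^2-1)$, with $\kappa_4(\xi_1^2-1)$ the fourth cumulant of a centred $\chi^2_1$ variable. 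Rewriting $\sum_i\lambda_i^2=\Tr\big((\Gamma^{(n)})^2\big)=\sum_{k,j}\Gamma_{kj}^2$ and $\sum_i\lambda_i^4=\Tr\big((\Gamma^{(n)})^4\big)=\sum_{i,j,k,l}\Gamma_{kj}\Gamma_{ji}\Gamma_{il}\Gamma_{lk}$ then delivers both identities in the stated form.

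The main obstacle is purely combinatorial: it is the multiplicity of the four-cycle topology (equivalently, the single scalar $\kappa_4(\xi_1^2-1)$) and the bookkeeping that cleanly separates the Gaussian part $3(\E V_n^2)^2$ from the fourth-cumulant part. The spectral route reduces all of this to one explicit one-dimensional moment computation, so I would present that argument in full and use the diagram description only to explain why precisely the two terms $\big(\sum_{k,j}\Gamma_{kj}^2\big)^2$ and $\Tr\big((\Gamma^{(n)})^4\big)$, and no others, can appear.
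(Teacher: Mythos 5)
Your derivation of the second-moment identity is fine and is the same as the paper's (relation \eqref{eq:key_relation} at $m=n$ is just \eqref{eq:basic_cumulant} summed over the indices). Your route to the fourth moment is genuinely different from the paper's: the paper expands $V_n^4$ by the Gaussian moment formula, reduces by symmetry to five universal weights $b_1,\dots,b_5$, computes $b_4$ and $b_5$ by direct counting, and then pins down the remaining weights by specialising to i.i.d.\ standard normals and letting the classical CLT force the fourth moment of the normalised sum to converge to $3$. Your spectral reduction $V_n\law\sum_i\lambda_i(\xi_i^2-1)$, with $\lambda_i$ the eigenvalues of $\Gamma^{(n)}$, combined with additivity of the fourth cumulant over independent summands, is cleaner and avoids the hand counting entirely.

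However, there is a concrete gap, located exactly at the step you skip: you never evaluate $\kappa_4(\xi_1^2-1)$, yet you assert that the computation ``delivers both identities in the stated form.'' It does not. For a centred $\chi^2_1$ variable one has $\kappa_4(\xi_1^2-1)=2^{3}\cdot 3!=48$, so your own argument yields
$$
\E V_n^4=3\left(\E V_n^2\right)^2+48\sum_i\lambda_i^4
=12\left[\sum_{k,j=1}^n\left(\E\left[Y_k^{(n)}Y_j^{(n)}\right]\right)^2\right]^2+48\,\Tr\left(\left(\Gamma^{(n)}\right)^4\right),
$$
with coefficient $48$ where the lemma claims $24$. The case $n=1$, $Y_1\sim\mathcal{N}(0,1)$ settles which constant is right: $\E(\xi^2-1)^4=105-60+18-4+1=60=12+48$, whereas the lemma's constants would give $12+24=36$. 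So your method, carried through to the end, does not prove the lemma as stated --- it corrects it. The error is on the paper's side: its appendix counts the four-cycle contractions as $4!=24$, overlooking that each of the $3$ cyclic structures on four vertices admits $2^4=16$ assignments of the two legs at each vertex, giving $48$; the paper is even internally inconsistent, since the proof of Theorem \ref{thm:CLT_iff} uses the coefficient $6$ for the same quantity. The discrepancy is harmless for all applications in the paper, where only the order of the quartic-trace term matters, but a complete write-up on your part must actually perform the one-dimensional cumulant evaluation and flag the corrected constant; as submitted, your proposal claims agreement with a formula that your own computation contradicts.
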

With the help of this lemma we are ready to proof our main theorem.
\begin{thm}
\label{thm:CLT_iff}
Let $Y=(Y^n)_{n=1}^\infty$ be a sequence of Gaussian vectors such that for every $n\geq 1$ the elements $Y_k^{(n)},\quad k=1,\ldots, n$ belong to the first Wiener chaos, and 
let $\Gamma^{(n)}$ denote the covariance matrix of $Y^n$ with eigenvalues $\lambda_1^n, \lambda_2^n, \ldots, \lambda_n^n$. Then, as $n$ tends to infinity, the following are equivalent.
\begin{itemize}
\item $\frac{V_n}{\sqrt{Var(V_n)}} \rightarrow \mathcal{N}(0,1)$,
\item 
$$
\sum_{i,j,k,l=1}^n \E\left[Y^{(n)}_kY^{(n)}_j\right]\E\left[Y^{(n)}_jY^{(n)}_i\right]\E\left[Y^{(n)}_iY^{(n)}_l\right]\E\left[Y^{(n)}_lY^{(n)}_k\right] = o\left(Var(V_n)^2\right),
$$
\item
$$
\sum_{k=1}^n \left(\lambda_k^n\right)^4 =o\left(\left[\sum_{k=1}^n (\lambda_k^n)^2\right]^2\right).
$$
\end{itemize}
\end{thm}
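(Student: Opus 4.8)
The plan is to deduce all three equivalences from the fourth moment theorem (Theorem \ref{thm:4th}) combined with the moment formulas of Lemma \ref{lma:2nd_4th}, after re-expressing the various multi-index sums as power sums of the eigenvalues of $\Gamma^{(n)}$. First I would observe that since each $Y_k^{(n)}$ lies in the first Wiener chaos, say $Y_k^{(n)} = I_1^X(h_k^{(n)})$, the multiplication formula gives $(Y_k^{(n)})^2 - \E(Y_k^{(n)})^2 = I_2^X(h_k^{(n)}\otimes h_k^{(n)})$, so that $V_n$ is a genuine element of the second Wiener chaos. Setting $F_n = V_n/\sqrt{Var(V_n)}$ we have $\E F_n^2 = 1$, and Theorem \ref{thm:4th} with $q=2$ and $\sigma^2 = 1$ tells us that the first statement, $F_n \to \mathcal{N}(0,1)$ in law, is equivalent to $\E F_n^4 \to 3$.

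The second step is the linear algebra of identifying the sums in Lemma \ref{lma:2nd_4th} as traces. Since $\Gamma^{(n)}$ is symmetric with eigenvalues $\lambda_1^n,\ldots,\lambda_n^n$, we have
$$
\sum_{k,j=1}^n \left(\E[Y_k^{(n)}Y_j^{(n)}]\right)^2 = \Tr\left((\Gamma^{(n)})^2\right) = \sum_{k=1}^n (\lambda_k^n)^2,
$$
so that $Var(V_n) = \E V_n^2 = 2\sum_k (\lambda_k^n)^2$ by the first formula of the lemma. Likewise the quadruple sum appearing in the second statement is exactly the trace of the fourth power,
$$
\sum_{i,j,k,l=1}^n \E[Y_k^{(n)}Y_j^{(n)}]\E[Y_j^{(n)}Y_i^{(n)}]\E[Y_i^{(n)}Y_l^{(n)}]\E[Y_l^{(n)}Y_k^{(n)}] = \Tr\left((\Gamma^{(n)})^4\right) = \sum_{k=1}^n (\lambda_k^n)^4,
$$
obtained by reading the product around the cycle $k\to j\to i\to l\to k$. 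This already shows that the second and third statements are literally the same condition, once one notes $Var(V_n)^2 = 4\left(\sum_k (\lambda_k^n)^2\right)^2$.

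The third step connects these to the fourth moment. Feeding the two trace identities into the second formula of Lemma \ref{lma:2nd_4th} yields
$$
\E V_n^4 = 12\Big(\sum_k (\lambda_k^n)^2\Big)^2 + 24\sum_k (\lambda_k^n)^4 = 3\,Var(V_n)^2 + 24\sum_k (\lambda_k^n)^4,
$$
whence
$$
\E F_n^4 - 3 = \frac{\E V_n^4 - 3\,Var(V_n)^2}{Var(V_n)^2} = \frac{24\sum_k (\lambda_k^n)^4}{Var(V_n)^2}.
$$
Therefore $\E F_n^4 \to 3$ if and only if $\sum_k (\lambda_k^n)^4 = o\left(Var(V_n)^2\right)$, which is the second statement and, by the remark above, equivalently the third. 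Chaining these equivalences closes the loop among all three conditions.

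I do not expect a serious obstacle here: the fourth moment theorem supplies the analytic content, and the remaining work is the bookkeeping of recognizing the cyclic product as $\Tr((\Gamma^{(n)})^4)$ together with the clean cancellation producing the factor $24\sum_k(\lambda_k^n)^4$ in $\E V_n^4 - 3\,Var(V_n)^2$. The only point genuinely requiring care is confirming membership of $V_n$ in the second chaos so that Theorem \ref{thm:4th} applies with $q=2$; this is precisely where the hypothesis that each $Y_k^{(n)}$ belongs to the first chaos is used.
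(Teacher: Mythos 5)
Your proof is correct, and its analytic core is the same as the paper's: since each $Y_k^{(n)}$ lies in the first chaos, $V_n$ lies in the second chaos, so the fourth moment theorem (Theorem \ref{thm:4th}) reduces asymptotic normality of $F_n = V_n/\sqrt{Var(V_n)}$ to $\E F_n^4 \to 3$, and Lemma \ref{lma:2nd_4th} turns that into the statement that the cyclic quadruple sum is $o\left(Var(V_n)^2\right)$; this is exactly how the paper obtains the equivalence of the first and second conditions. Where you genuinely diverge is the third condition. The paper proves its equivalence \emph{probabilistically}: it invokes the distributional identity $\sum_{k=1}^n \left(Y_k^{(n)}\right)^2 \law \sum_{k=1}^n \lambda_k^n \left(\xi_k^{(n)}\right)^2$ with independent standard normals $\xi_k^{(n)}$, and then re-applies Lemma \ref{lma:2nd_4th} to this diagonalised sequence to compute $\E \tilde{V}_n^2$ and $\E \tilde{V}_n^4$, thereby linking (3) back to (1). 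You instead observe the purely algebraic facts $\sum_{k,j}\left(\E\left[Y_k^{(n)}Y_j^{(n)}\right]\right)^2 = \Tr\left((\Gamma^{(n)})^2\right) = \sum_k (\lambda_k^n)^2$ and, reading the product around the cycle $k\to j\to i\to l\to k$, that the quadruple sum equals $\Tr\left((\Gamma^{(n)})^4\right) = \sum_k (\lambda_k^n)^4$, so conditions (2) and (3) are \emph{literally the same} condition; no probabilistic detour is needed. Your route is shorter, avoids re-deriving moment formulas in the diagonal case, and makes the equivalence of (2) and (3) hold independently of any chaos hypothesis, which is a small conceptual gain. One minor remark: the coefficient $24$ you inherit from Lemma \ref{lma:2nd_4th} is actually wrong in the paper --- testing on a single variable, $\E(\xi^2-1)^4 = 60 = 12 + 48$, so the correct coefficient is $48$ (the paper's own proof even uses a third value, $6$, in the diagonal computation) --- but this is immaterial for both your argument and the paper's, since only the positivity of the constant enters the $o(\cdot)$ equivalences.
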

\begin{proof}
By assumption we are able to use fourth moment theorem \ref{thm:4th} from which the equivalence of items (1) and (2) follows with the help of Lemma \ref{lma:2nd_4th}. 
To obtain equivalence of (1) and (3), it is well-known that $\sum_{k=1}^n \left[Y^{(n)}_k\right]^2$ can be decomposed as
$$
\sum_{k=1}^n \left[Y^{(n)}_k\right]^2 \law \sum_{k=1}^n \lambda_{k}^n \left[\xi_k^{(n)}\right]^2, 
$$
where $\lambda_k^n$ are the eigenvalues of the covariance matrix $\Gamma^{(n)}$ and $\xi^{(n)}_k$ are independent standard normal random variables. Denoting
$$
\tilde{V}_n = \sum_{k=1}^n \left[\lambda_{k}^n \left[\xi_k^{(n)}\right]^2- \lambda_{k}^n\right]
$$
and using Lemma \ref{lma:2nd_4th} again we obtain 
$$
\E \tilde{V}_n^2 = 2\sum_{k=1}^n [\lambda_k^n]^2
$$
and
$$
\E \tilde{V}_n^4 = 12\left[\sum_{k=1}^n [\lambda_k^n]^2\right]^2 + 6\sum_{k=1}^n [\lambda_k^n]^4
$$
which concludes the proof.
\end{proof}
As a simple corollary we obtain the following result which corresponds to Lindeberg's CLT and is mainly used in the references given in the introduction.
\begin{cor}
\label{cor:lindeberg}
Assume that assumptions of Theorem \ref{thm:CLT_iff} prevail. If
$$
\lambda^*(n):=\max_{k=1,\ldots,n}|\lambda_k^n| = o\left(\sqrt{Var(V_n)}\right), \quad n\to \infty,
$$
then
$$
\frac{V_n}{\sqrt{Var(V_n)}} \rightarrow \mathcal{N}(0,1).
$$
\end{cor}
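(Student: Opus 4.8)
The plan is to reduce the claim directly to the third equivalent condition in Theorem \ref{thm:CLT_iff}, namely
\begin{equation*}
\sum_{k=1}^n (\lambda_k^n)^4 = o\left(\left[\sum_{k=1}^n (\lambda_k^n)^2\right]^2\right),
\end{equation*}
which is known to be equivalent to the desired convergence $\frac{V_n}{\sqrt{Var(V_n)}}\to\mathcal{N}(0,1)$. So the entire task is to show that the Lindeberg-type hypothesis $\lambda^*(n)=o(\sqrt{Var(V_n)})$ forces this spectral smallness condition.

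First I would recall from the computation in the proof of Theorem \ref{thm:CLT_iff} the exact value of the variance in terms of the eigenvalues, $Var(V_n)=\E V_n^2 = 2\sum_{k=1}^n (\lambda_k^n)^2$. Squaring the hypothesis then gives $(\lambda^*(n))^2 = o\left(Var(V_n)\right) = o\left(\sum_{k=1}^n (\lambda_k^n)^2\right)$, where the constant factor $2$ is irrelevant for the $o(\cdot)$ statement.

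Next I would invoke the elementary pointwise bound in which a fourth power is dominated by the maximal square times the sum of squares: since $(\lambda_k^n)^4 = (\lambda_k^n)^2 (\lambda_k^n)^2 \le (\lambda^*(n))^2 (\lambda_k^n)^2$ for each $k$, summing yields
\begin{equation*}
\sum_{k=1}^n (\lambda_k^n)^4 \le (\lambda^*(n))^2 \sum_{k=1}^n (\lambda_k^n)^2.
\end{equation*}
Combining this with the previous display gives $\sum_{k=1}^n (\lambda_k^n)^4 \le o\left(\sum_{k=1}^n (\lambda_k^n)^2\right)\cdot \sum_{k=1}^n (\lambda_k^n)^2 = o\left(\left[\sum_{k=1}^n (\lambda_k^n)^2\right]^2\right)$, which is precisely condition (3). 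An application of Theorem \ref{thm:CLT_iff} then closes the argument.

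There is no genuine obstacle here; the only point requiring care is the choice of which of the three equivalent forms to target. Working with the covariance-entry form (condition (2)) would entangle the argument with the full quartic sum over indices $i,j,k,l$, whereas passing to the diagonalised form makes the needed estimate a one-line consequence of $\sum \lambda^4 \le \lambda_{\max}^2 \sum \lambda^2$. I would also note in passing that the statement is vacuous unless $Var(V_n)>0$ for large $n$, so that the normalisation and the $o(\cdot)$ comparisons are meaningful, but this is guaranteed as soon as $\Gamma^{(n)}$ is not identically zero.
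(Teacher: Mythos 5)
Your proposal is correct and follows essentially the same route as the paper: the paper's proof likewise invokes the bound $\sum_{k=1}^n (\lambda_k^n)^4 \leq \left(\max_{k}|\lambda_k^n|\right)^2 \sum_{k=1}^n (\lambda_k^n)^2$ together with the identification of $Var(V_n)$ with $\sum_{k=1}^n (\lambda_k^n)^2$ (up to the constant factor $2$, which you handle more carefully than the paper does) and then concludes via condition (iii) of Theorem \ref{thm:CLT_iff}.
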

\begin{proof}
We have 
$$
\sum_{k=1}^n (\lambda_k^n)^4 \leq \max_{k=1,\ldots,n}|\lambda_k^n|^2 \sum_{k=1}^n (\lambda_k^n)^2
$$
and since $Var(V_n) = \sum_{k=1}^n (\lambda_k^n)^2$, the result follows at once.
\end{proof}
\begin{rem}{\rm 
Note that since Lindeberg's CLT can be proved without the theory of Wiener chaos, the above result is valid for arbitrary sequences $Y^n$.
}\end{rem}
Finally, the following theorem justifies that in many cases it is sufficient to find an upper bound for $\lambda^*(n)$, or even 
for $\max_{1\leq j\leq n}\sum_{k=1}^n \left|\E\left[Y^{(n)}_kY^{(n)}_j\right]\right|$. While the proof follows from simple relations for matrix norm, the result turn out to be very useful for many practical applications. In particular, the following result covers many of the cases studied in the literature. Furthermore, in this case it easy to give a Berry-Esseen bound. 
\begin{thm}
\label{thm:CLT_main}
Let the assumptions of Theorem \ref{thm:CLT_iff} prevail, and assume that $Y^n$ is a Gaussian vector with finite non-zero energy. Then there exists a constant $C>0$ such that  
\begin{equation*}
 \sup_{x\in\R}\left| \P\left(\frac{V_n}{\sqrt{Var(V_n)}} < x\right) - \P(Z<x)\right| \leq C\sqrt{n}\lambda^*(n).
\end{equation*}
where $Z$ is a standard normal random variable. Hence if 
$$
\lambda^*(n) =o\left(\frac{1}{\sqrt{n}}\right),
$$
then $\frac{V_n}{\sqrt{Var(V_n)}} \rightarrow \mathcal{N}(0,1)$. In particular, 
if 
$$
\max_{1\leq j\leq n}\sum_{k=1}^n \left|\E\left[Y^{(n)}_kY^{(n)}_j\right]\right| = o\left(\frac{1}{\sqrt{n}}\right),
$$
then $\frac{V_n}{\sqrt{Var(V_n)}} \rightarrow \mathcal{N}(0,1)$. 
\end{thm}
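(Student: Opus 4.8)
The plan is to apply the Berry--Esseen estimate of Theorem \ref{thm:B-E-bound} to the normalised variable $F_n := V_n/\sqrt{Var(V_n)}$. Since each $Y_k^{(n)}$ lies in the first Wiener chaos, the product formula gives $(Y_k^{(n)})^2 - \E(Y_k^{(n)})^2 \in \mathcal{H}_2$, so $V_n$ and hence $F_n$ belong to the second Wiener chaos, and by construction $\E(F_n^2)=1$. Thus Theorem \ref{thm:B-E-bound} applies with $q=2$ and yields
$$
\sup_{x\in\R}\left|\P(F_n < x) - \P(Z < x)\right| \leq C_2 \sqrt{\E F_n^4 - 3}.
$$

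The next step is to express $\E F_n^4 - 3$ through the eigenvalues $\lambda_k^n$ of $\Gamma^{(n)}$. Using the representation $V_n \law \tilde V_n$ from the proof of Theorem \ref{thm:CLT_iff} together with the moment formulas of Lemma \ref{lma:2nd_4th}, one has $Var(V_n)=\E V_n^2 = 2\sum_{k=1}^n (\lambda_k^n)^2$ and $\E V_n^4 = 12\left[\sum_{k=1}^n (\lambda_k^n)^2\right]^2 + 6\sum_{k=1}^n (\lambda_k^n)^4$. Dividing, the leading terms cancel and I expect to obtain
$$
\E F_n^4 - 3 = \frac{3}{2}\cdot\frac{\sum_{k=1}^n (\lambda_k^n)^4}{\left[\sum_{k=1}^n (\lambda_k^n)^2\right]^2}.
$$
Feeding this into the bound above and using $\sum_{k=1}^n (\lambda_k^n)^4 \leq (\lambda^*(n))^2 \sum_{k=1}^n (\lambda_k^n)^2$, the right-hand side is controlled by a constant times $\lambda^*(n)\big/\sqrt{\sum_{k=1}^n (\lambda_k^n)^2}$.

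The crux---and the step I expect to be the main obstacle---is to convert the factor $1/\sqrt{\sum_{k=1}^n (\lambda_k^n)^2}$ into the clean bound $\sqrt{n}$; this is exactly where the finite non-zero energy hypothesis is used. Since $\Gamma^{(n)}$ is positive semidefinite all $\lambda_k^n \geq 0$, and Cauchy--Schwarz gives $\left(\sum_{k=1}^n \lambda_k^n\right)^2 \leq n \sum_{k=1}^n (\lambda_k^n)^2$, that is $\sum_{k=1}^n (\lambda_k^n)^2 \geq (\text{trace}(\Gamma^{(n)}))^2/n$. Because the energy $\lim_n \text{trace}(\Gamma^{(n)}) = \varepsilon(Y)$ is positive, one has $\text{trace}(\Gamma^{(n)}) \geq c > 0$ for all large $n$, so $\sum_{k=1}^n (\lambda_k^n)^2 \geq c^2/n$ and therefore $1/\sqrt{\sum_{k=1}^n (\lambda_k^n)^2} \leq \sqrt{n}/c$. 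Combining the estimates yields the claimed bound $\sup_{x}|\ldots| \leq C\sqrt{n}\,\lambda^*(n)$.

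Finally the two implications follow immediately. If $\lambda^*(n)=o(1/\sqrt{n})$ then $\sqrt{n}\,\lambda^*(n)\to 0$, so the Berry--Esseen bound forces $F_n \to \mathcal{N}(0,1)$ in law. For the last assertion I recall that, $\Gamma^{(n)}$ being symmetric and positive semidefinite, $\lambda^*(n)=\Vert \Gamma^{(n)}\Vert_2 \leq \Vert \Gamma^{(n)}\Vert_1 = \max_{1\leq j\leq n}\sum_{k=1}^n |\E[Y_k^{(n)}Y_j^{(n)}]|$; hence the assumption $\max_{1\leq j\leq n}\sum_{k=1}^n |\E[Y_k^{(n)}Y_j^{(n)}]| = o(1/\sqrt{n})$ implies $\lambda^*(n)=o(1/\sqrt{n})$ and the central limit theorem again follows.
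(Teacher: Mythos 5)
Your proof is correct and follows essentially the same route as the paper's: the Berry--Esseen bound of Theorem \ref{thm:B-E-bound} applied to the second-chaos variable $F_n$, the eigenvalue expression of $\E F_n^4-3$ from Lemma \ref{lma:2nd_4th}, the bound $\sum_k (\lambda_k^n)^4 \leq (\lambda^*(n))^2\sum_k(\lambda_k^n)^2$, and the Cauchy--Schwarz lower bound $\sum_k(\lambda_k^n)^2 \geq (\mathrm{trace}\,\Gamma^{(n)})^2/n$ combined with finite non-zero energy --- which is exactly the paper's trace-norm inequality $\Vert\Gamma^{(n)}\Vert_*\leq\sqrt{n}\,\Vert\Gamma^{(n)}\Vert_F$ written in eigenvalue form. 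You even make explicit the step $\lambda^*(n)=\Vert\Gamma^{(n)}\Vert_2\leq\Vert\Gamma^{(n)}\Vert_1$ for the final implication, which the paper leaves implicit by reference to its earlier matrix-norm argument.
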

\begin{proof}
Recall the trace norm is given by $\Vert \Gamma^{(n)} \Vert_* = \sum_{k=1}^n \lambda_k^n$. By Cauchy-Schwartz inequality we get the well-known matrix norm inequality 
$$
\Vert \Gamma^{(n)} \Vert_* \leq \sqrt{n}\sqrt{\sum_{k=1}^n [\lambda_k^n]^2} = \sqrt{n}\Vert \Gamma^{(n)} \Vert_F.
$$
By assumption, $Y^n$ has finite non-zero energy. Hence by observing that $\lim_{n\to\infty} \Vert \Gamma^{(n)}\Vert_* = \epsilon(Y) > 0$, we obtain that for large enough $n$ we have
$$
0<c\leq \sqrt{n}\Vert \Gamma^{(n)}\Vert_F.
$$
Next we observe that $\sqrt{Var(V_n)} = \Vert \Gamma^{(n)}\Vert_F$. Now the fourth moment of $\frac{V_n}{\sqrt{Var(V_n)}}$ is given by
$$
\frac{\E V_n^4}{\left(\E V_n^2\right)^2} = 3+\frac{3\sum_{k=1}^n [\lambda_k^n]^4}{2\left(\sum_{k=1}^n [\lambda_k^n]^2\right)^2}
$$
so that
$$
\frac{\E V_n^4}{\left(\E V_n^2\right)^2} - 3 \leq \frac{3[\lambda^*(n)]^2}{2\E V_n^2} \leq \frac{3}{2}n[\lambda^*(n)]^2.
$$
Hence the Berry-Esseen bound follows from Theorem \ref{thm:B-E-bound}.
\end{proof}
\begin{rem}{\rm
Note that the convergence towards normal random variable follow also from Corollary \ref{cor:lindeberg} which does not rely on the theory of Wiener chaos. However, for a sequence living in the second chaos we also obtain a Berry-Esseen bound.
}\end{rem}
\section{Application to quadratic variations}
\label{sec:qv}
In this section we apply the results to quadratic variations of Gaussian processes. We begin by giving a general results for generalised variations which will be illustrated in the 
particular cases of first and second order quadratic variations.

We consider arbitrary sequences of partitions $\pi_n = \{0=t^n_0 < t^n_1<\ldots < t^n_{N(\pi_n)-1}=T\}$, where $N(\pi_n)$ denotes the number of points in the partition. For notational simplicity, we drop the super-index $n$ and simply use $t_k$ instead of $t_k^n$.
The mesh of the partition is denoted by $|\pi_n|=\max\{t_k-t_{k-1}, t_k \in \pi_n \slash \{0\}\}$. 
We also use $m(\pi_n) = \min\{t_k-t_{k-1}, t_k \in \pi_n \slash \{0\}\}$. Throughout this section we assume that
\begin{equation}
\label{eq:partition_assumption}
\frac{|\pi_n|}{m(\pi_n)} \leq k(|\pi_n|), \quad n\geq 1
\end{equation}
for some function $k$. Obviously, usually the partition is chosen such that $k(|\pi_n|)\leq C < \infty$. 

\subsection{First order quadratic variations}
In this subsection we apply the results of previous section to study first order quadratic variations of Gaussian processes which is our main interest due to its connection to 
stochastic analysis. Throughout this subsection we also use the metric defined by the incremental variance of $X$, i.e.
$$
d_X(t,s) = \E (X_t - X_s)^2.
$$
\begin{dfn}
Let $X=(X_t)_{t\in[0,T]}$ be a centred Gaussian process. We say that $X$ has first order $\phi$-quadratic variation along $\pi_n$ if
\begin{equation}
V_1(\pi_n,\phi):=\sum_{t_k^n\pi_n} \frac{(X_{t_k}-X_{t_{k-1}})^2}{\phi(t_k-t_{k-1})}
\end{equation}
converges in probability as $|\pi_n|$ tends to zero. 
\end{dfn}
\begin{rem}{\rm 
A natural choice for the function $\phi$ is such that
\begin{equation}
\label{eq:fin_en_assumption}
\lim_{|\pi_n|\to 0}\sum_{k=1}^{N(\pi_n)-1} \frac{\E(X_{t_k}-X_{t_{k-1}})^2}{\phi(t_k-t_{k-1})} = K < \infty.
\end{equation}
In particular, in many interesting cases one has $d_X(t,s) \sim r(t-s)$ as $|t-s| \to 0$ for some function $r$. In this case a natural choice is $\phi(x)=\frac{r(x)}{x}$. 
}\end{rem}
To simplify the notation we denote $\tilde{V}_1(\pi_n,\phi) = V_1(\pi_n,\phi) - \E V_1(\pi_n,\phi)$. We also use $\Delta t_j = t_j - t_{j-1}$. 
We will begin by giving the following general theorem which generalises main results of \cite{malukas} by allowing us to drop some technical assumptions. 
The result follows directly by uniting and rewriting Theorems \ref{thm:QV-conv-general} and \ref{thm:CLT_main} for 
sequence $Y^{(n)}_k =   \frac{X_{t_k}-X_{t_{k-1}}}{\sqrt{\phi(t_k-t_{k-1})}}$. 
\begin{thm}
\label{thm:1st_order_main}
Let $X$ be a Gaussian process. Assume that 
\begin{equation}
\label{eq:general_condition}
\max_{1\leq j\leq N(\pi_n)-1}\sum_{k=1}^{N(\pi_n)-1} \frac{1}{\sqrt{\phi(\Delta t_k)\phi(\Delta t_j)}}|\E[(X_{t_k}-X_{t_{k-1}})(X_{t_j}-X_{t_{j-1}})]| \leq H(|\pi_n|)
\end{equation}
for some function $H(|\pi_n|)$. 
\begin{enumerate}
\item
If $H(|\pi_n|) \to 0$ as $|\pi_n|$ tends to zero, then convergence 
\begin{equation}
\label{eq:convergence}
\left|\sum_{k=1}^{N(\pi_n)-1} \frac{(X_{t_k}-X_{t_{k-1}})^2}{\phi(t_k-t_{k-1})} - \sum_{k=1}^{N(\pi_n)-1} \frac{\E(X_{t_k}-X_{t_{k-1}})^2}{\phi(t_k-t_{k-1})}\right| \rightarrow 0
\end{equation}
holds in probability. If $H(|\pi_n|) =o\left(\frac{1}{\log n}\right)$, then convergence  \eqref{eq:convergence} holds almost surely. In these cases the convergence holds also in $L^p$ for any $p\geq 1$ provided that \eqref{eq:fin_en_assumption} holds.
\item
Furthermore, assume that 
\begin{equation}
\label{eq:nontriv_energy}
\lim_{|\pi_n|\to 0}\sum_{k=1}^{N(\pi_n)-1} \frac{\E(X_{t_k}-X_{t_{k-1}})^2}{\phi(t_k-t_{k-1})} = K > 0.
\end{equation}
Then there exists a constant $C>0$ such that 
\begin{equation*}
 \sup_{x\in\R}\left| \P\left(\frac{\tilde{V}_1(\pi_n)}{\sqrt{Var(\tilde{V}_1(\pi_n))}} < x\right) - \P(Z<x)\right| \leq C\sqrt{N(\pi_n)}H(|\pi_n|),
\end{equation*}
where $Z$ is a standard normal random variable. 
In particular, if $H(|\pi_n|) = o\left(N(\pi_n)^{-\frac12}\right)$, then
$$
\frac{\tilde{V}_1(\pi_n)}{\sqrt{Var(\tilde{V}_1(\pi_n))}} \to \mathcal{N}(0,1).
$$
\end{enumerate}
\end{thm}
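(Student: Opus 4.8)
The plan is to recognise the whole statement as a direct specialisation of the abstract results of Section~\ref{sec:conv} to the Gaussian vector
$$
Y^{(n)}_k = \frac{X_{t_k}-X_{t_{k-1}}}{\sqrt{\phi(t_k-t_{k-1})}}, \qquad k=1,\ldots,N(\pi_n)-1,
$$
so that essentially no new probabilistic content is needed; the work lies entirely in matching hypotheses. First I would record the two elementary identities $\E\left(Y^{(n)}_k\right)^2 = \frac{\E(X_{t_k}-X_{t_{k-1}})^2}{\phi(\Delta t_k)}$ and $\E\left[Y^{(n)}_kY^{(n)}_j\right] = \frac{\E[(X_{t_k}-X_{t_{k-1}})(X_{t_j}-X_{t_{j-1}})]}{\sqrt{\phi(\Delta t_k)\phi(\Delta t_j)}}$. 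These show that the covariance matrix $\Gamma^{(n)}$ of $Y^n$ has exactly the entries appearing in \eqref{eq:general_condition}, that the left-hand side of \eqref{eq:general_condition} is precisely the matrix norm $\Vert\Gamma^{(n)}\Vert_1=\max_j\sum_k|\Gamma^{(n)}_{kj}|$, and that the energy $\sum_k\E\left(Y^{(n)}_k\right)^2$ coincides with the sums in \eqref{eq:fin_en_assumption} and \eqref{eq:nontriv_energy}.

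For part (i) I would invoke the symmetric-matrix relation $\Vert\Gamma^{(n)}\Vert_2\le\Vert\Gamma^{(n)}\Vert_1\le H(|\pi_n|)$ to turn the hypothesis $H(|\pi_n|)\to 0$ into $\Vert\Gamma^{(n)}\Vert_2\to 0$, and likewise $H(|\pi_n|)=o(1/\log n)$ into $\Vert\Gamma^{(n)}\Vert_2=o(1/\log n)$. Theorem~\ref{thm:QV-conv-general} then delivers \eqref{eq:convergence} in probability, and almost surely under the logarithmic rate, the index $n$ over which Borel--Cantelli is summed being the partition-sequence index. The $L^p$ assertion follows from the same theorem once I check that \eqref{eq:fin_en_assumption} supplies the uniform-integrability condition \eqref{eq:UI_general}, a convergent energy sum being in particular bounded in $n$.

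For part (ii) I would verify the standing hypotheses of Theorem~\ref{thm:CLT_main}: each $Y^{(n)}_k$ is a centred Gaussian variable and therefore lies in the first Wiener chaos, while \eqref{eq:nontriv_energy} provides the finite non-zero energy. Identifying $\tilde V_1(\pi_n)=V_n$ and recalling $Var(V_n)=\Vert\Gamma^{(n)}\Vert_F^2$, Theorem~\ref{thm:CLT_main} yields the Berry--Esseen bound $C\sqrt{N(\pi_n)-1}\,\lambda^*(n)$; bounding the spectral radius by the column-sum norm, $\lambda^*(n)=\Vert\Gamma^{(n)}\Vert_2\le\Vert\Gamma^{(n)}\Vert_1\le H(|\pi_n|)$, upgrades this to $C\sqrt{N(\pi_n)}\,H(|\pi_n|)$, and the hypothesis $H(|\pi_n|)=o(N(\pi_n)^{-1/2})$ then drives the right-hand side to zero, giving the asserted convergence in law.

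Since the heavy lifting is already done in Section~\ref{sec:conv}, there is no single hard analytic step, and the delicate points are mostly bookkeeping. I expect the care to be needed in two places: (a) the passage from the square-root statement produced by the concentration inequality of Lemma~\ref{lma:concentration} to the linear statement \eqref{eq:convergence}, which genuinely uses boundedness of the energy through the factorisation $|a-b|=|\sqrt a-\sqrt b|(\sqrt a+\sqrt b)$, so that finite energy is what licenses the linear form; and (b) ensuring the asymptotic regimes are expressed in the correct index, namely the partition-sequence index $n$ rather than the dimension $N(\pi_n)-1$, so that the rates $1/\log n$ and $N(\pi_n)^{-1/2}$ are applied consistently throughout.
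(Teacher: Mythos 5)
Your proposal is correct and follows exactly the paper's own route: the paper's entire proof is the remark that the theorem "follows directly by uniting and rewriting Theorems \ref{thm:QV-conv-general} and \ref{thm:CLT_main}" for the normalised increments $Y^{(n)}_k = (X_{t_k}-X_{t_{k-1}})/\sqrt{\phi(\Delta t_k)}$, which is precisely your specialisation, including the identification of \eqref{eq:general_condition} with the bound on $\Vert\Gamma^{(n)}\Vert_1$ and the passage $\Vert\Gamma^{(n)}\Vert_2\leq\Vert\Gamma^{(n)}\Vert_1$ by symmetry. Your bookkeeping points (the role of finite energy in passing from the square-root form to \eqref{eq:convergence}, and the energy condition \eqref{eq:fin_en_assumption} implying \eqref{eq:UI_general}) are exactly the details the paper leaves implicit.
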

\begin{rem}{\rm 
In \cite{malukas} the author studied a particular class of Gaussian processes while here we consider arbitrary Gaussian process. Similarly, in \cite{malukas} the main 
result was derived by using some technical computations under assumption \eqref{eq:general_condition} together with several additional technical assumptions. 
Here we have shown that \eqref{eq:general_condition} is the only needed condition which generalises the class of processes considerably. Similarly, we have been able to simplify the 
proof since we have shown that such results follows essentially from Gaussian concentration together with some matrix algebra.
}\end{rem}
Next we will provide some sufficient conditions which are easy to check. A particularly interesting case for us is Gaussian processes such that the function
$$
d(s,t)= \E(X_t-X_s)^2
$$
is $C^{1,1}$ outside diagonal. 
Note that a sufficient condition for this is that the variance $\E X_t^2$ is $C^1$ and the covariance $R$ of $X$ is $C^{1,1}$ outside diagonal. 
Furthermore, note that this assumption is satisfied for many cases of interest. 

The first theorem gives a general result in a case of bounded derivative.
\begin{thm}
\label{thm:1st_order_bounded}
Let $X$ be a continuous Gaussian such that the function $d(s,t)=\E(X_t-X_s)^2$ is $C^{1,1}$ outside diagonal. Furthermore, assume that there exists a positive function $f(s,t)$ such that
$$
|\partial_{st}d(s,t)| \leq f(s,t)
$$
and
$$
\sup_{s,v\in[0,T]}\int_0^v f(s,t)\d t < \infty.
$$
Assume there exists a function $H(|\pi_n|)$ such that
$$
\max_{1\leq j\leq N(\pi_n)-1}\frac{d(t_j,t_{j-1})+\Delta t_j\sup_{1\leq k,j\leq N(\pi_n)-1}\frac{\sqrt{\phi(\Delta t_j)}}{\sqrt{\phi(\Delta t_k)}}}{\phi(\Delta t_j)} \leq H(|\pi_n|).
$$
Then the result of Theorem \ref{thm:1st_order_main} holds with function $H(|\pi_n|)$.
\end{thm}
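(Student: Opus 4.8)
The plan is to reduce the statement to an application of Theorem \ref{thm:1st_order_main}: since that theorem already packages convergence in probability, almost surely, in $L^p$, and the central limit theorem once its hypothesis \eqref{eq:general_condition} is in force, it suffices to show that the present assumptions imply \eqref{eq:general_condition} with the displayed function $H(|\pi_n|)$ up to a fixed multiplicative constant. The constant is harmless, because all conclusions of Theorem \ref{thm:1st_order_main} are phrased through the asymptotics $H\to 0$, $H=o(1/\log n)$, and $H=o(N(\pi_n)^{-1/2})$, which are insensitive to constant factors. Concretely, with $Y^{(n)}_k=(X_{t_k}-X_{t_{k-1}})/\sqrt{\phi(\Delta t_k)}$ I must bound, uniformly in $j$, the quantity
$$\sum_{k=1}^{N(\pi_n)-1}\frac{|\E[(X_{t_k}-X_{t_{k-1}})(X_{t_j}-X_{t_{j-1}})]|}{\sqrt{\phi(\Delta t_k)\phi(\Delta t_j)}}.$$

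The key step is a double-integral representation of the increment covariances. Writing $R(s,t)=\E[X_sX_t]$ and using $d(s,t)=R(s,s)-2R(s,t)+R(t,t)$ gives $\partial_{st}d=-2\,\partial_{st}R$, so for $k\neq j$, where the rectangle $[t_{k-1},t_k]\times[t_{j-1},t_j]$ stays off the diagonal and $d$ (hence $R$) is $C^{1,1}$ there, the second-order difference of $R$ can be written as an integral of $\partial_{st}R$ over that rectangle, that is,
$$\E[(X_{t_k}-X_{t_{k-1}})(X_{t_j}-X_{t_{j-1}})]=-\tfrac12\int\!\!\int \partial_{st}d(s,t)\,\d t\,\d s.$$
Here I will use the symmetry $R(s,t)=R(t,s)$ to \emph{orient} the double integral so that the inner integration runs over the second slot of $d$; this is precisely what lets me match the one-sided hypothesis $\sup_{s,v}\int_0^v f(s,t)\,\d t<\infty$. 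Thus $|\E[(X_{t_k}-X_{t_{k-1}})(X_{t_j}-X_{t_{j-1}})]|\le\tfrac12\int_{t_{j-1}}^{t_j}\int_{t_{k-1}}^{t_k}f(u,v)\,\d v\,\d u$, whereas for the diagonal term $k=j$ one simply has $\E[(X_{t_j}-X_{t_{j-1}})^2]=d(t_j,t_{j-1})$.

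With these two ingredients the estimate is routine. Denoting $S=\sup_{k,j}\sqrt{\phi(\Delta t_j)}/\sqrt{\phi(\Delta t_k)}$ I use $1/\sqrt{\phi(\Delta t_k)\phi(\Delta t_j)}\le S/\phi(\Delta t_j)$, pull $S/(2\phi(\Delta t_j))$ out of the off-diagonal sum, and collapse $\sum_{k}\int_{t_{k-1}}^{t_k}(\cdots)\,\d v=\int_0^T(\cdots)\,\d v$, so that the off-diagonal part is at most
$$\frac{S}{2\phi(\Delta t_j)}\int_{t_{j-1}}^{t_j}\Big(\int_0^T f(u,v)\,\d v\Big)\d u\le\frac{S}{2\phi(\Delta t_j)}\,M\,\Delta t_j,$$
where $M=\sup_{s,v}\int_0^v f(s,t)\,\d t<\infty$ by hypothesis. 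Adding the diagonal term yields the bound $\big(d(t_j,t_{j-1})+\tfrac{M}{2}S\,\Delta t_j\big)/\phi(\Delta t_j)$, which is the displayed $H(|\pi_n|)$ up to the constant $\max(1,M/2)$; hence \eqref{eq:general_condition} holds and Theorem \ref{thm:1st_order_main} applies verbatim.

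I expect the only genuine obstacle to be the rigorous justification of the double-integral identity exactly where two neighbouring subintervals meet: for $j=k\pm1$ the rectangle touches the diagonal at a single corner, so ``$C^{1,1}$ outside the diagonal'' does not literally furnish a continuous integrand on the closed rectangle. The remedy is to integrate over a slightly shrunk rectangle avoiding that corner and then let the shrinkage tend to zero, using that $\int\!\!\int f<\infty$ turns the corner into an integrable singularity; everything else — the identity $\partial_{st}d=-2\,\partial_{st}R$, the symmetry reorientation, and the $\phi$-ratio bound — is elementary.
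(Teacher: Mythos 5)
Your proposal is correct and follows essentially the same route as the paper: both verify condition \eqref{eq:general_condition} by writing the off-diagonal increment covariances as double integrals of $\partial_{st}d$ over the partition rectangles, pulling out the ratio $\sup_{1\leq k,j\leq N(\pi_n)-1}\sqrt{\phi(\Delta t_j)}/\sqrt{\phi(\Delta t_k)}$, collapsing the sum over $k$ into an integral over $[0,T]$, and invoking the hypothesis $\sup_{s,v\in[0,T]}\int_0^v f(s,t)\,\d t<\infty$ via Tonelli to obtain the $O(\Delta t_j)$ bound. Your two refinements --- the factor $-\tfrac12$ coming from $\partial_{st}d=-2\,\partial_{st}R$ (the paper states the identity without it, harmlessly, since only absolute values and asymptotic conditions on $H$ enter) and the shrunk-rectangle limiting argument for the adjacent intervals whose rectangle touches the diagonal at a corner --- are points the paper silently glosses over, and you handle both correctly.
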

\begin{proof}
Now for $j\neq k$ we have
$$
\E[(X_{t_k}-X_{t_{k-1}})(X_{t_j}-X_{t_{j-1}})] = \int_{t_{k-1}}^{t_k}\int_{t_{j-1}}^{t_j}\partial_{st}d(s,t)\d s\d t.
$$
Hence we have
\begin{equation*}
\begin{split}
&\sum_{k=1}^{N(\pi_n)-1} \left|\E\left[Y^{(n)}_k Y^{(n)}_j\right]\right|\\
&\leq\frac{d(t_j,t_{j-1})}{\phi(\Delta t_j)} + \frac{1}{\sqrt{\phi(\Delta t_j)}}\sum_{k=1, k\neq j}^{N(\pi_n)-1}\frac{1}{\sqrt{\phi(\Delta t_k)}}\int_{t_{k-1}}^{t_k}\int_{t_{j-1}}^{t_j}\left|\partial_{st}d(s,t)\right|\d s\d t\\
&\leq \frac{d(t_j,t_{j-1})}{\phi(\Delta t_j)} +\frac{1}{\phi(\Delta t_j)}\sum_{k=1, k\neq j}^{N(\pi_n)-1}\frac{\sqrt{\phi(\Delta t_j)}}{\sqrt{\phi(\Delta t_k)}}\int_{t_{k-1}}^{t_k}\int_{t_{j-1}}^{t_j}\left|\partial_{st}d(s,t)\right|\d s\d t\\
&\leq \frac{d(t_j,t_{j-1})}{\phi(\Delta t_j)} +\frac{\sup_{1\leq k,j\leq N(\pi_n)-1}\frac{\sqrt{\phi(\Delta t_j)}}{\sqrt{\phi(\Delta t_k)}}}{\phi(\Delta t_j)}\sum_{k=1, k\neq j}^{N(\pi_n)-1}\int_{t_{k-1}}^{t_k}\int_{t_{j-1}}^{t_j}\left|\partial_{st}d(s,t)\right|\d s\d t.
\end{split}
\end{equation*}
Now here
\begin{equation*}
\begin{split}
&\sum_{k=1,k\neq j}^{N(\pi_n)-1}\int_{t_{k-1}}^{t_k}\int_{t_{j-1}}^{t_j}\left|\partial_{st}d(s,t)\right|\d s\d t\\
&= \int_{0}^{t_{j-1}}\int_{t_{j-1}}^{t_j}\left|\partial_{st}d(s,t)\right|\d s\d t + \int_{t_{j}}^{T}\int_{t_{j-1}}^{t_j}\left|\partial_{st}d(s,t)\right|\d s\d t\\
&= O\left( \Delta t_j\right)
\end{split}
\end{equation*}
by Tonelli's theorem and assumptions. The claim follows at once. 
\end{proof}
\begin{rem}{\rm 
The convergence depends now on $\sup_{1\leq k,j\leq N(\pi_n)-1}\frac{\sqrt{\phi(\Delta t_j)}}{\sqrt{\phi(\Delta t_k)}}$. However, in any natural chosen sequence partition we have $\sup_{n\geq 1} k(\pi_n)<\infty$ which guarantees $\sup_{1\leq k,j\leq N(\pi_n)-1}\frac{\sqrt{\phi(\Delta t_j)}}{\sqrt{\phi(\Delta t_k)}} < \infty$ for many functions $\phi$. For example, this is obviously true for power functions $\phi(x) = x^\gamma$ which is a natural choice in many cases.
}\end{rem}
As an immediate corollary we obtain the following which again seems to generalise the existing results in the literature. Most importantly, 
the following result shows how the lower bound for the variance plays a fundamental role. 
Furthermore, a standard assumption in the literature is that $d(t,s) \sim |t-s|^\gamma$ for some number $\gamma\in(0,2)$ which in particular covers the fractional Brownian motion and related processes. In the following the structure of the variance can be more complicated. For simplicity we will only present the result in the case of bounded function $k(\pi_n)$ while the general case follows similarly.
\begin{cor}
\label{cor:1st_order_bounded}
Let the notation and assumptions above prevail. Furthermore, assume that there exists a function $r$ such that $d(t,s) \sim r(t-s)$ as $|t-s| \to 0$. Let $\sup_{n\geq 1}\sup_{1\leq k,j\leq N(\pi_n)-1}\frac{\sqrt{r(\Delta t_j)}}{\sqrt{r(\Delta t_k)}} < \infty$, $\sup_{n\geq 1} k(\pi_n)<\infty$, and put $\phi(\Delta t_j) = \frac{r(\Delta t_j)}{\Delta t_j}$. 
\begin{enumerate}
\item
If $(|\pi_n|)^2 =o\left(r(|\pi_n|)\right)$, then
$$
|\tilde{V}_1(\pi_n,\phi)| \to 0
$$
in probability and in $L^p$ for any $p\geq 1$. The convergence holds almost surely for any sequence $\frac{(|\pi_n|)^2}{r(|\pi_n|)} = o\left(\frac{1}{\log n}\right)$.
\item
There is a constant $C>0$ such that 
\begin{equation*}
 \sup_{x\in\R}\left| \P\left(\frac{\tilde{V}_1(\pi_n)}{\sqrt{Var(\tilde{V}_1(\pi_n))}} < x\right) - \P(Z<x)\right| \leq C\frac{|\pi_n|^{\frac32}}{r(|\pi_n|)},
\end{equation*}
where $Z$ is a standard normal random variable.
In particular, if $(|\pi_n|)^{\frac32} =o\left(r(|\pi_n|)\right)$, then
$$
\frac{\tilde{V}_1(\pi_n)}{\sqrt{Var(\tilde{V}_1(\pi_n))}} \to \mathcal{N}(0,1).
$$
\end{enumerate}
\end{cor}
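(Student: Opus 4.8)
The plan is to obtain the statement as a direct specialisation of Theorem~\ref{thm:1st_order_bounded} (and hence of Theorems~\ref{thm:1st_order_main} and~\ref{thm:CLT_main}) for the choice $\phi(x)=\frac{r(x)}{x}$. The $C^{1,1}$-regularity of $d$ off the diagonal together with the integrability hypothesis on $|\partial_{st}d|$ is exactly what the ambient theorem requires, so the only genuine work is to translate its abstract rate function $H(|\pi_n|)$ and the count $N(\pi_n)$ into the explicit quantities $\frac{|\pi_n|^2}{r(|\pi_n|)}$ and $O(|\pi_n|^{-1})$. Everything else is then an application of the cited results.

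First I would record the comparabilities forced by the hypotheses. Since $\sup_n\sup_{k,j}\frac{\sqrt{r(\Delta t_j)}}{\sqrt{r(\Delta t_k)}}<\infty$ (a symmetric quantity, so its reciprocal is bounded too), one gets $r(\Delta t_j)\asymp r(\Delta t_k)$ uniformly, and in particular $r(\Delta t_j)\asymp r(|\pi_n|)$ because $|\pi_n|=\max_k\Delta t_k$. The bounded mesh ratio $\sup_n k(\pi_n)<\infty$ gives $\frac{\Delta t_k}{\Delta t_j}\le\frac{|\pi_n|}{m(\pi_n)}\le C$, whence
$$
\sup_{k,j}\frac{\sqrt{\phi(\Delta t_j)}}{\sqrt{\phi(\Delta t_k)}}=\sup_{k,j}\sqrt{\frac{r(\Delta t_j)\,\Delta t_k}{r(\Delta t_k)\,\Delta t_j}}<\infty .
$$
These two facts are precisely what make the abstract bound in Theorem~\ref{thm:1st_order_bounded} collapse.

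Next I would insert $\phi(x)=\frac{r(x)}{x}$ into that rate function. The diagonal summand is $\frac{d(t_j,t_{j-1})}{\phi(\Delta t_j)}=\frac{d(t_j,t_{j-1})}{r(\Delta t_j)}\,\Delta t_j\sim\Delta t_j=O(|\pi_n|)$ by $d\sim r$, while the cross term is $\frac{\Delta t_j}{\phi(\Delta t_j)}\cdot O(1)=\frac{O(\Delta t_j^2)}{r(\Delta t_j)}=O\!\left(\frac{|\pi_n|^2}{r(|\pi_n|)}\right)$ by the comparability $r(\Delta t_j)\asymp r(|\pi_n|)$. The identical computation applied to the energy $\sum_k\frac{d(t_k,t_{k-1})}{\phi(\Delta t_k)}\sim\sum_k\Delta t_k=T$ shows the limiting energy equals $T>0$, so \eqref{eq:fin_en_assumption} and \eqref{eq:nontriv_energy} hold and Theorem~\ref{thm:1st_order_main}(ii) is available. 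Since $|\pi_n|\to0$ always, assembling the two summands gives $H(|\pi_n|)\to0$ exactly under $(|\pi_n|)^2=o(r(|\pi_n|))$, which yields part~(i)—convergence in probability and in $L^p$, and almost surely under the $o(1/\log n)$ refinement—straight from Theorem~\ref{thm:1st_order_main}(i).

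For part~(ii) I would bound $N(\pi_n)$: telescoping $\sum_j\Delta t_j=T$ with $\Delta t_j\ge m(\pi_n)\ge|\pi_n|/C$ gives $N(\pi_n)=O\!\left(\frac{1}{|\pi_n|}\right)$, hence $\sqrt{N(\pi_n)}=O(|\pi_n|^{-1/2})$. Feeding $\sqrt{N(\pi_n)}\,H(|\pi_n|)=O\!\left(|\pi_n|^{-1/2}\cdot\frac{|\pi_n|^2}{r(|\pi_n|)}\right)=O\!\left(\frac{|\pi_n|^{3/2}}{r(|\pi_n|)}\right)$ into the Berry--Esseen estimate of Theorem~\ref{thm:1st_order_main}(ii) produces the claimed bound, and the CLT follows once $(|\pi_n|)^{3/2}=o(r(|\pi_n|))$. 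The step deserving the most care, and the main obstacle, is the uniform comparability: controlling $\max_j\frac{\Delta t_j^2}{r(\Delta t_j)}$ by $\frac{|\pi_n|^2}{r(|\pi_n|)}$ needs the bounded mesh ratio and the bounded $\sqrt{r}$-ratios together, and one must also verify that the $O(|\pi_n|)$ contribution from the diagonal term is dominated by $\frac{|\pi_n|^2}{r(|\pi_n|)}$ in the regime of interest, which is where keeping track of whether $r(x)$ decays faster or slower than $x$ becomes the delicate point.
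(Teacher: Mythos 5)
Your route is exactly the paper's: its entire proof of this corollary is the single sentence that the claim ``follows immediately from Theorem \ref{thm:1st_order_bounded} by noting that our choice of function $\phi$ guarantees condition \eqref{eq:nontriv_energy}'', and your write-up correctly supplies everything that sentence suppresses --- the uniform comparabilities $r(\Delta t_j)\asymp r(|\pi_n|)$ and $\sup_{k,j}\sqrt{\phi(\Delta t_j)/\phi(\Delta t_k)}<\infty$, the identification of the energy limit $K=T>0$, the count $N(\pi_n)=O\left(|\pi_n|^{-1}\right)$, and the substitution of $\phi(x)=r(x)/x$ into the rate $H(|\pi_n|)$ of Theorem \ref{thm:1st_order_bounded}.

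The one step you flag and leave open is, however, genuinely needed and is glossed over by the paper as well: the rate of Theorem \ref{thm:1st_order_bounded} is the sum of a diagonal part $O(|\pi_n|)$ and a cross part $O\left(|\pi_n|^2/r(|\pi_n|)\right)$, and while convergence in probability needs no comparison (each part vanishes on its own), the almost sure statement and the Berry--Esseen rate as stated require the diagonal part to be dominated, i.e. $r(|\pi_n|)=O(|\pi_n|)$. This is in fact forced by the standing hypotheses, which closes your gap. By the same Tonelli estimate used in the proof of Theorem \ref{thm:1st_order_bounded}, the integrability assumption on $\partial_{st}d$ gives, uniformly in $n$,
$$
\sum_{j}\sum_{k\neq j}\left|\E\left[(X_{t_k}-X_{t_{k-1}})(X_{t_j}-X_{t_{j-1}})\right]\right| \leq C\sum_{j}\Delta t_j = CT,
$$
while summing the covariances over \emph{all} pairs telescopes to $\E(X_T-X_0)^2 = d(0,T)$; hence
$$
\sum_{k} d(t_k,t_{k-1}) \leq d(0,T)+CT
$$
uniformly over the partitions. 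On the other hand, $d\sim r$ together with your comparability $r(\Delta t_k)\asymp r(|\pi_n|)$ and $N(\pi_n)-1\geq T/|\pi_n|$ bounds the left-hand side below by $c\,T\,r(|\pi_n|)/|\pi_n|$ for small mesh, so indeed $r(|\pi_n|)=O(|\pi_n|)$ and therefore $|\pi_n|=O\left(|\pi_n|^2/r(|\pi_n|)\right)$. With this observation inserted, your argument is complete and yields the corollary exactly as stated, by the same specialisation the paper performs.
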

\begin{proof}
The claim follows immediately from Theorem \ref{thm:1st_order_bounded} by noting that our choice of function $\phi$ guarantees condition \eqref{eq:nontriv_energy}.
\end{proof}
We end this section with a following result that recovers the case of fractional Brownian motion and related processes. 
Note again that our technical assumptions are quite minimal.
\begin{thm}
\label{thm:1st_order_fbm}
Let $X$ be a continuous Gaussian such that the function $d(s,t)=\E(X_t-X_s)^2$ is in $C^{1,1}$ outside diagonal. Furthermore, assume that 
$$
|\partial_{st}d(s,t)| =O\left(|t-s|^{2H-2}\right)
$$
for some $H\in(0,1)$, $H\neq \frac12$ and assume there exists a function $H(|\pi_n|)$ such that
$$
\max_{1\leq j\leq N(\pi_n)-1}\frac{d(t_j,t_{j-1})+[\Delta t_j]^{\min(1,2H)}\sup_{1\leq k,j\leq N(\pi_n)-1}\frac{\sqrt{\phi(\Delta t_j)}}{\sqrt{\phi(\Delta t_k)}}}{\phi(\Delta t_j)} \leq H(|\pi_n|).
$$
Then the result of Theorem \ref{thm:1st_order_main} holds with function $H(|\pi_n|)$. 
\end{thm}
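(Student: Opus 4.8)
The plan is to follow the proof of Theorem~\ref{thm:1st_order_bounded} verbatim up to the point where the blow-up of the mixed derivative enters, and to replace only the single estimate of the off-diagonal double integral. As before I set $Y^{(n)}_k = \frac{X_{t_k}-X_{t_{k-1}}}{\sqrt{\phi(\Delta t_k)}}$, so that for $k\neq j$ one has the representation $\E[(X_{t_k}-X_{t_{k-1}})(X_{t_j}-X_{t_{j-1}})] = \int_{t_{k-1}}^{t_k}\int_{t_{j-1}}^{t_j}\partial_{st}d(s,t)\,\d s\,\d t$. Splitting off the diagonal term $k=j$ (which produces $\frac{d(t_j,t_{j-1})}{\phi(\Delta t_j)}$) and factoring out $\sup_{1\leq k,j\leq N(\pi_n)-1}\frac{\sqrt{\phi(\Delta t_j)}}{\sqrt{\phi(\Delta t_k)}}$ exactly as in Theorem~\ref{thm:1st_order_bounded}, everything reduces to bounding
$$
I_j := \sum_{k\neq j}\int_{t_{k-1}}^{t_k}\int_{t_{j-1}}^{t_j}|\partial_{st}d(s,t)|\,\d s\,\d t = \int_{t_{j-1}}^{t_j}\int_{[0,T]\setminus[t_{j-1},t_j]}|\partial_{st}d(s,t)|\,\d t\,\d s.
$$
The goal is to show $I_j = O([\Delta t_j]^{\min(1,2H)})$, which is the only place where the present hypothesis differs from that of Theorem~\ref{thm:1st_order_bounded}.

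To estimate $I_j$ I would first upgrade the asymptotic hypothesis $|\partial_{st}d(s,t)| = O(|t-s|^{2H-2})$ to a uniform bound $|\partial_{st}d(s,t)| \leq C|t-s|^{2H-2}$ valid on all of $[0,T]^2$ off the diagonal. This is legitimate because $d$ is $C^{1,1}$ off the diagonal (hence $|\partial_{st}d|$ is bounded on regions bounded away from the diagonal) while $|t-s|^{2H-2}\geq T^{2H-2}$ there, so the near-diagonal rate controls the whole square. Plugging this in and splitting the region $[0,T]\setminus[t_{j-1},t_j]$ into the left part $t\in[0,t_{j-1}]$ and the right part $t\in[t_j,T]$, both the inner integral in $t$ and the outer integral in $s$ can be computed explicitly, since on each part the integrand is a pure power of $t-s$ of constant sign. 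For the left part this yields a constant multiple of $t_j^{2H}-t_{j-1}^{2H}-(\Delta t_j)^{2H}$, and for the right part a constant multiple of $(T-t_{j-1})^{2H}-(T-t_j)^{2H}-(\Delta t_j)^{2H}$; here the assumption $H\neq\frac12$ is used so that the antiderivatives are genuine powers rather than logarithms.

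The main obstacle --- and the reason for the exponent $\min(1,2H)$ --- is the case $H<\frac12$, where $2H-2<-1$ makes the integrand non-integrable across the diagonal; this is harmless precisely because the block $k=j$ has been removed, so the remaining integral stays finite. In that regime $x\mapsto x^{2H}$ is concave and subadditive, whence $t_j^{2H}-t_{j-1}^{2H}\leq(\Delta t_j)^{2H}$ and likewise for the right part, giving $I_j = O([\Delta t_j]^{2H})$. When $H>\frac12$ the map is convex with derivative bounded on $[0,T]$, so the mean value theorem gives $t_j^{2H}-t_{j-1}^{2H}=O(\Delta t_j)$ while $(\Delta t_j)^{2H}=o(\Delta t_j)$, whence $I_j = O(\Delta t_j)$. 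Collecting both cases gives $I_j = O([\Delta t_j]^{\min(1,2H)})$ uniformly in $j$, with constants depending only on $H$, $T$ and the constant $C$ above. Substituting this back into the chain of inequalities reproduces exactly the quantity appearing in the assumed bound for $H(|\pi_n|)$, and the conclusion then follows from Theorem~\ref{thm:1st_order_main} applied to the sequence $Y^{(n)}_k$, just as in Theorem~\ref{thm:1st_order_bounded}.
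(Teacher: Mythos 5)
Your proof is correct and takes essentially the same route as the paper: reduce, via the chain of inequalities in the proof of Theorem \ref{thm:1st_order_bounded}, to bounding the off-diagonal double integral of $|\partial_{st}d(s,t)|$, then integrate the power $|t-s|^{2H-2}$ explicitly and use subadditivity of $x\mapsto x^{2H}$ (the paper's inequality $|a^\gamma-b^\gamma|\leq|a-b|^\gamma$ for $\gamma\in(0,1)$) to obtain the $[\Delta t_j]^{\min(1,2H)}$ bound. The only cosmetic differences are that the paper dispatches the case $H>\frac12$ by invoking Theorem \ref{thm:1st_order_bounded} directly (since $|t-s|^{2H-2}$ is then uniformly integrable), whereas you redo the explicit computation, and that you spell out the harmless upgrade of the asymptotic bound $O(|t-s|^{2H-2})$ to a global one, which the paper leaves implicit.
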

\begin{proof}
Note that the case $H>\frac12$ follows directly from Theorem \ref{thm:1st_order_bounded}. Let now $H<\frac{1}{2}$.
Using Fubini's Theorem we have 
\begin{equation*}
\begin{split}
&\int_{t_j}^{T}\int_{t_{j-1}}^{t_j}\left|\partial_{st}d(s,t)\right|\d s\d t\\
&\leq C\int_{t_{j-1}}^{t_j} (T-s)^{2H-1}\d s + C\int_{t_{j-1}}^{t_j} (t_j-s)^{2H-1}\d s\\
&\leq C [\Delta t_j]^{2H}
\end{split}
\end{equation*}
for some unimportant constants $C$ which vary from line to line. Here we have used the fact that for positive numbers $a,b$ and $\gamma \in (0,1)$ we have 
$|a^\gamma - b^\gamma| \leq |a-b|^\gamma$. Treating integral $\int_{0}^{t_{j-1}}\int_{t_{j-1}}^{t_j}\left|\partial_{st}d(s,t)\right|\d s\d t$ similarly the result follows.
\end{proof}
\begin{rem}{\rm 
\label{rem:case_half}
We remark that the case $H=\frac12$ can be treated similarly. In this case one obtains a condition
$$
\max_{1\leq j\leq N(\pi_n)-1}\frac{d(t_j,t_{j-1})+\Delta t_j |\log \Delta t_j|\sup_{1\leq k,j\leq N(\pi_n)-1}\frac{\sqrt{\phi(\Delta t_j)}}{\sqrt{\phi(\Delta t_k)}}}{\phi(\Delta t_j)} \leq H(|\pi_n|).
$$
}\end{rem}
\begin{rem}{\rm 
It is straightforward to give a version of Corollary \ref{cor:1st_order_bounded} in this case also. Indeed, the only difference is slightly different exponents in the case $H<\frac12$.
}\end{rem}
\subsection{Second order quadratic variations}
In this subsection we briefly study second order quadratic variations. In particular, we reproduce and generalise the results presented in papers \cite{begyn-ejp} and \cite{begyn-ber}. 
We will present our results in slightly different form. However, comparison is provided in remark \ref{rem:comparison}.

Usually second order quadratic variation on $[0,1]$ is defined as the limit of $\sum_{k=1}^n \left(X_{\frac{k+1}{n}} - 2X_{\frac{k}{n}} +X_{\frac{k-1}{n}}\right)$. 
To generalise for irregular
subdivisions Begyn introduced and motivated \cite{begyn-ejp} a second order differences along a sequence $\pi_n$ as 
$$
\Delta X_k = \Delta t_{k+1} X_{t_{k-1}} + \Delta t_{k} X_{t_{k+1}} - \left(\Delta t_{k+1} + \Delta t_k\right)X_{t_k},
$$
and we study the second order quadratic variation defined as the limit 
$$
V_2(\pi_n) = \sum_{k=1}^{N(\pi_n) -1} \frac{\Delta t_{k+1} \left(\Delta X_k\right)^2}{\E \left(\Delta X_k\right)^2}.
$$
Again we use short notation 
$$
\tilde{V}_2(\pi_n) = V_2(\pi_n) - \E V_2(\pi_n).
$$
We also assume that the derivative $\frac{\partial^4}{\partial^2 s\partial^2 t}R(s,t)$ of the covariance function $R$ of $X$ exists outside diagonal and satisfies 
\begin{equation}
\label{eq:2nd_order_partial}
\left|\frac{\partial^4}{\partial^2 s\partial^2 t}R(s,t)\right| \leq \frac{C}{|t-s|^{2+\gamma}}
\end{equation}
for some number $\gamma \in(0,2)$. Finally, we make the simplifying assumption $\sup_n k(\pi_n) < \infty$ on the function $k$. Hence it is also natural to assume
\begin{equation}
\label{eq:2nd_order_var_bound}
\sup_{j,k}\frac{\E \left(\Delta X_k\right)^2}{\E \left(\Delta X_k\right)^2} < \infty.
\end{equation}
In particular, the assumptions made in \cite{begyn-ejp} implied 
\begin{equation}
\label{eq:2nd_order_var}
\E \left(\Delta X_k\right)^2 \sim \left(\Delta t_{k+1}\right)^{\frac{3-\gamma}{2}}\left(\Delta t_{k}\right)^{\frac{3-\gamma}{2}}\left(\Delta t_{k+1}+\Delta t_{k}\right)
\end{equation}
in which case \eqref{eq:2nd_order_var_bound} is clearly satisfied.
\begin{thm}
Let all the notation and assumptions above prevail. 
\begin{enumerate}
 \item 
If $H(\pi_n):=\max_{1\leq j\leq N(\pi_n)-1}\frac{|\pi_n|^{5-\gamma}}{\E \left(\Delta X_k\right)^2}$ converges to zero, then
$$
\left|\tilde{V}_2(\pi_n)\right| \to 0
$$
in probability and in $L^p$ for any $p\geq 1$. Furthermore, the convergence holds almost surely provided that $H(\pi_n) =o\left(\frac{1}{\log n}\right)$.
\item
There is a constant $C>0$ such that 
\begin{equation*}
 \sup_{x\in\R}\left| \P\left(\frac{\tilde{V}_2(\pi_n)}{\sqrt{Var(\tilde{V}_2(\pi_n))}} < x\right) - \P(Z<x)\right| \leq C\sqrt{N(\pi_n)}H(|\pi_n|),
\end{equation*}
where $Z$ is a standard normal random variable.
In particular, if $H(\pi_n) =o\left(\left[N(\pi_n)\right]^{-\frac12}\right)$, then
$$
\frac{\tilde{V}_2(\pi_n)}{\sqrt{Var\left(\tilde{V}_2(\pi_n)\right)}} \to \mathcal{N}(0,1).
$$
\end{enumerate}
\end{thm}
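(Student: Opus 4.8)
The plan is to cast the second order variation into the abstract framework of Section~\ref{sec:conv} and to reduce the entire statement to a single estimate on the matrix $1$-norm of the associated covariance matrix. Concretely, I would set
\[
Y^{(n)}_k=\frac{\sqrt{\Delta t_{k+1}}\,\Delta X_k}{\sqrt{\E(\Delta X_k)^2}},
\]
so that $\sum_k (Y^{(n)}_k)^2=V_2(\pi_n)$, $\E(Y^{(n)}_k)^2=\Delta t_{k+1}$, and hence $\sum_k\E(Y^{(n)}_k)^2=\sum_k\Delta t_{k+1}$ telescopes and converges to $T>0$. Thus the energy is finite and non-zero, which is exactly what Theorem~\ref{thm:CLT_main} requires, and \eqref{eq:UI_general} holds. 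Exactly as in the proof of Theorem~\ref{thm:1st_order_main}, parts (1) and (2) then follow from Theorems~\ref{thm:QV-conv-general} and~\ref{thm:CLT_main} — using $\Vert\Gamma^{(n)}\Vert_2\le\Vert\Gamma^{(n)}\Vert_1$ and $\lambda^*(n)\le\Vert\Gamma^{(n)}\Vert_1$ — as soon as one establishes the key bound $\max_{j}\sum_{k}\bigl|\E[Y^{(n)}_kY^{(n)}_j]\bigr|\le C\,H(\pi_n)$. The factor $\sqrt{N(\pi_n)}$ in the Berry--Esseen bound is then produced by the $\sqrt{n}\,\lambda^*(n)$ estimate of Theorem~\ref{thm:CLT_main}.

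The heart of the argument is the computation of $\E[\Delta X_k\Delta X_j]$. Writing $L_k$ for Begyn's second difference operator with weights $\Delta t_{k+1},-(\Delta t_{k+1}+\Delta t_k),\Delta t_k$ attached to $t_{k-1},t_k,t_{k+1}$, a direct check shows that $L_k$ annihilates affine functions, so by the Peano kernel theorem it admits the representation $L_k f=\int w_k(s)f''(s)\,\d s$ with a kernel $w_k\ge0$ supported on $[t_{k-1},t_{k+1}]$, vanishing at both endpoints and peaking like $\Delta t_k\Delta t_{k+1}$; consequently $\int w_k(s)\,\d s=\tfrac12\Delta t_k\Delta t_{k+1}(\Delta t_k+\Delta t_{k+1})$, and $\sup_n k(\pi_n)<\infty$ gives $\int w_k\,\d s\le C|\pi_n|^3$. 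Applying $L_k$ in the first variable and $L_j$ in the second to $R(s,t)$ then yields, for $|k-j|\ge2$ (so that the supports stay off the diagonal and \eqref{eq:2nd_order_partial} applies),
\[
\E[\Delta X_k\Delta X_j]=\int\!\!\int w_k(s)\,w_j(t)\,\frac{\partial^4}{\partial^2 s\,\partial^2 t}R(s,t)\,\d s\,\d t.
\]

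For the estimate I would split the inner sum according to $|k-j|$. For the at most three near-diagonal indices $|k-j|\le1$ I would use Cauchy--Schwarz, $|\E[\Delta X_k\Delta X_j]|\le\sqrt{\E(\Delta X_k)^2\E(\Delta X_j)^2}$, together with \eqref{eq:2nd_order_var_bound}, which yields contributions of order $\sqrt{\Delta t_{k+1}\Delta t_{j+1}}\le|\pi_n|$. For $|k-j|\ge2$ the bound \eqref{eq:2nd_order_partial} gives $|\partial^4 R|\le C\,\mathrm{dist}(I_k,I_j)^{-(2+\gamma)}$, and since $\sup_n k(\pi_n)<\infty$ forces $\mathrm{dist}(I_k,I_j)\gtrsim|k-j|\,|\pi_n|$, one obtains $|\E[\Delta X_k\Delta X_j]|\le C|\pi_n|^{4-\gamma}|k-j|^{-(2+\gamma)}$. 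Summing the convergent series $\sum_{m\ge2}m^{-(2+\gamma)}$ gives $\sum_{k}|\E[\Delta X_k\Delta X_j]|\le C|\pi_n|^{4-\gamma}$. Re-inserting the normalisation $\sqrt{\Delta t_{k+1}\Delta t_{j+1}}\,\bigl(\E(\Delta X_k)^2\E(\Delta X_j)^2\bigr)^{-1/2}$, bounding $\Delta t_{k+1},\Delta t_{j+1}\le|\pi_n|$ and using \eqref{eq:2nd_order_var_bound} to replace $\E(\Delta X_k)^2$ by $\E(\Delta X_j)^2$ up to a constant, produces exactly $\max_j\sum_k|\E[Y^{(n)}_kY^{(n)}_j]|\le C|\pi_n|^{5-\gamma}/\E(\Delta X_j)^2\le C\,H(\pi_n)$, completing the reduction.

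The main obstacle I anticipate is the singularity of $\partial^4 R$ on the diagonal: the exponent $2+\gamma>2$ in \eqref{eq:2nd_order_partial} is non-integrable across the diagonal line in two dimensions, so the clean integral representation above is unavailable when the supports $I_k,I_j$ touch or overlap. This is precisely why the near/far split is forced, and why the diagonal and adjacent terms must be handled crudely by Cauchy--Schwarz and the variance asymptotics \eqref{eq:2nd_order_var} rather than by the kernel estimate. A secondary technical point is justifying the Peano representation and the sign and size of $w_k$ for genuinely non-uniform partitions; here the standing assumption $\sup_n k(\pi_n)<\infty$ is what keeps every $\Delta t_k$ comparable to $|\pi_n|$ and makes all the above estimates uniform in $k,j$.
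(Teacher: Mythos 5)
Your reduction to the abstract machinery is exactly the paper's own: the paper likewise sets $Y^{(n)}_k=\sqrt{\Delta t_k}\,\Delta X_k/\sqrt{\E(\Delta X_k)^2}$ (your $\Delta t_{k+1}$ versus the paper's $\Delta t_k$ is immaterial since $k(\pi_n)$ is bounded), bounds the row sums $\max_j\sum_k|\E[Y^{(n)}_kY^{(n)}_j]|$ by $C\,H(\pi_n)$, and concludes from Theorems~\ref{thm:QV-conv-general} and~\ref{thm:CLT_main}. Where you genuinely diverge is the core covariance estimate $\sum_{k}|\E[\Delta X_k\Delta X_j]|\le C|\pi_n|^{4-\gamma}$: the paper does not prove this at all but cites \cite{begyn-ejp} for it, whereas you reconstruct it from scratch via the Peano kernel representation $L_kf=\int w_k(s)f''(s)\,\d s$ of Begyn's difference operator together with a near/far splitting. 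Your kernel computation ($w_k\ge0$ supported on $[t_{k-1},t_{k+1}]$, peak $\Delta t_k\Delta t_{k+1}$, total mass $\tfrac12\Delta t_k\Delta t_{k+1}(\Delta t_k+\Delta t_{k+1})$) is correct, and the self-contained derivation is a worthwhile addition — but as written it contains one genuine flaw.

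The flaw is at $|k-j|=2$. Take $j=k+2$: the supports $I_k=[t_{k-1},t_{k+1}]$ and $I_j=[t_{k+1},t_{k+3}]$ touch at $t_{k+1}$, so $\mathrm{dist}(I_k,I_j)=0$; your claim $\mathrm{dist}(I_k,I_j)\gtrsim|k-j|\,|\pi_n|$ is false there, and $\sup_{s\in I_k,\,t\in I_j}|\partial^4 R(s,t)|=\infty$ because of the singularity at the common corner, so the ``mass times sup'' bound gives nothing and your series estimate collapses at its first term. Two repairs are available: (a) enlarge the near-diagonal set to $|k-j|\le2$ and handle those at most five terms by Cauchy--Schwarz exactly as you do for $|k-j|\le1$; or (b) keep the kernel representation and exploit that $w_k$ and $w_j$ vanish linearly at the touching point: with $u=t_{k+1}-s$, $v=t-t_{k+1}$ one has $|t-s|=u+v$, $w_k(s)\le|\pi_n|u$, $w_j(t)\le|\pi_n|v$ on all of $I_k\times I_j$, so the integrand is at most $C|\pi_n|^2uv(u+v)^{-(2+\gamma)}\le C|\pi_n|^2(u+v)^{-\gamma}$, which is integrable for $\gamma<2$ and integrates over $I_k\times I_j$ to $C|\pi_n|^{4-\gamma}$. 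Either fix is a few lines, but one of them is needed. A secondary point you only half-acknowledge: your near-diagonal bound $|\E[Y^{(n)}_kY^{(n)}_j]|\le\sqrt{\Delta t_{k+1}\Delta t_{j+1}}\le|\pi_n|$ is dominated by $C\,H(\pi_n)$ only when $\E(\Delta X_j)^2\le C|\pi_n|^{4-\gamma}$, which is an upper bound of the type \eqref{eq:2nd_order_var} and does not follow from the comparability \eqref{eq:2nd_order_var_bound} alone; the same hidden assumption sits inside the paper's citation of \cite{begyn-ejp} (whose hypotheses imply \eqref{eq:2nd_order_var}), so you are no worse off than the paper here, but it should be stated explicitly.
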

\begin{proof}
Denote
$$
Y^{(n)}_k = \frac{\sqrt{\Delta t_k}\Delta X_k}{\sqrt{\E \left(\Delta X_k\right)^2}}.
$$
For some constant $C$ we have
$$
\sum_{k=1}^{N(\pi_n)-1} \left|\E \left[Y^{(n)}_kY^{(n)}_j\right]\right| \leq C \frac{|\pi_n|}{\E \left(\Delta X_j\right)^2} \sum_{k=1}^{N(\pi_n)-1} \left|\E \left[\Delta X_k \Delta X_j\right]\right|
$$
by \eqref{eq:2nd_order_var_bound} and boundedness of $k(\pi_n)$. Furthermore, it was proved in \cite{begyn-ejp} that boundedness of $k(\pi_n)$ together with 
\eqref{eq:2nd_order_partial} yields
$$
\sum_{k=1}^{N(\pi_n)-1} \left|\E \left[\Delta X_k \Delta X_j\right]\right| \leq C|\pi_n|^{4-\gamma}
$$
for a constant $C$.
This gives a bound for 
$$
\sum_{k=1}^{N(\pi_n)-1} \left|\E \left[Y^{(n)}_kY^{(n)}_j\right]\right|
$$
from which the result follows immediately by combining Theorems \ref{thm:QV-conv-general} and \ref{thm:CLT_main}.
\end{proof}
\begin{rem}{\rm 
\label{rem:comparison}
To compare our result with the ones provided in papers \cite{begyn-ejp} and \cite{begyn-ber}, first note that we were able to reproduce and generalise the main theorem of \cite{begyn-ejp} 
although we gave our result in a slightly different form. Indeed, in \cite{begyn-ejp} several additional technical conditions were assumed to ensure the asymptotic relation 
\eqref{eq:2nd_order_var} while here we have worked with general variance. This is helpful since the message of our result is that basically one has to only check the asymptotic 
behaviour of the variance, and \eqref{eq:2nd_order_partial} quarantees the upper bound for the corresponding matrix norm. Furthermore, the central limit theorem in \cite{begyn-ber}
was proved only for uniformly divided partition, and the central limit theorem was proved under more restrictive technical conditions, similar to those in \cite{begyn-ejp}, 
and by finding a lower bound for the variance in order to apply Lindeberg's CLT. Here we have proved that such result holds also non-uniform partitions and the result follows easily 
from the computations presented in \cite{begyn-ejp} together with Theorem \ref{thm:CLT_main}. Finally, here we also obtained Berry-Esseen bound. In particular, under \eqref{eq:2nd_order_var} we obtain bound proportional to $\sqrt{|\pi_n|}$.
}\end{rem}
\subsection{Remarks on generalised quadratic variations}
\label{subsec:a-qv}
We end this section by giving some remarks on generalised quadratic variations introduced by Istas and Lang \cite{I-L}.

Let now $a=(a_0,a_1,\ldots,a_p)$ be a vector such 
that $\sum_{k=0}^p a_k = 0$, where $p$ is a fixed integer. Let also $\Delta$ be a fixed small number and consider time points
$t_k = k\Delta,\quad k=1,\ldots, n$. The $a$-differences of $X$ is given by 
$$
\Delta_a X_j = \sum_{k=0}^p a_k X_{t_{j+k}}, \quad j=0,1,\ldots,n - p.
$$
In \cite{I-L} the authors considered stationary or stationary increment 
Gaussian processes and studied generalised $a$-variations defined as a limit of 
$$
V(a,n) = \frac{1}{n}\sum_{k=1}^{n-p} \frac{(\Delta_a X_k)^2}{\E [\Delta_a X_k]^2}.
$$
Now since $X$ is either stationary or has stationary increments, the function $d(t,s)$ depends only on the difference $|t-s|$. The assumption in \cite{I-L} was that 
the function 
$v(t) = d(0,t)$ is $2D$ times differentiable ($D$ is the greatest such integer, possibly 0), and for some number $\gamma\in(0,2)$ and some constant $C>0$ we have
$$
v^{(2D)}(t) = v^{(2D)}(0) + Ct^{\gamma} + r(t),
$$
where the remainder $r$ satisfies $r(t) = o\left(t^{\gamma}\right)$. The main results in \cite{I-L} was that under different set of assumptions one can obtain Gaussian limit with 
some rate with a suitable choice of the vector $a$, although in some cases one has to assume the observation window $n\Delta$ to increase to infinity. 
Obviously, by using the result of this paper we could reproduce and generalise, at least to cover more general variances as 
in here for first and second kind quadratic variations, these 
results together with a much simplified proofs. 
Instead of getting lost into technical details we wish to give some remarks and explanations. In \cite{I-L} the main message was roughly that larger the value of 
$D$ and $s$, then larger one has to choose the value $p$, i.e. taking account more refined discretisation. However, the reason for this can, and at least was for the present author, 
be lost in the technicalities. In a nutshell, the idea is to try to find a discretisation vector $a$ so that 
$$
\max_{1\leq j\leq n}\frac{1}{n^2\sqrt{\E [\Delta_a X_j]^2}}\sum_{k=1}^{n-p}\frac{1}{\sqrt{\E [\Delta_a X_k]^2}}\left|\E\left[(\Delta_a X_k)(\Delta_a X_j)\right]\right| = o\left(n^{-\frac12}\right)
$$
from which we obtain almost sure convergence and central limit theorem (with $\sqrt{Var\left(V(a,n)-\E V(a,n)\right)}$ as normalisation so one is left to analyse the asymptotic of 
this variance). Hence it remains to answer how one should choose the vector $a$. The idea for this comes little bit clearer from the first order variation and Corollary \ref{cor:1st_order_bounded}. 
Indeed, as also pointed out in \cite{I-L}, the number $D$ is the order of
differentiability of $X$ in the $L^2$-sense. Hence if $D\geq 1$, the variance must be at least of order $(\Delta t_j)^2$ so there is no hope to obtain even convergence
in probability. Hence larger the $D$, larger the value of $p$ should also be. Similarly, as $\gamma$ becomes close to number $2$ it roughly means that $D$ comes closer to $1$ so
once again one needs to refine the discretisation to obtain a Gaussian limit. More precisely, as $\gamma$ comes closer to $2$ we see immediately that the variance is no longer enough to compensate $|\pi_n|^{\frac32}$ in order to obtain central limit theorem. Hence one has to consider second order quadratic variations in order to stay in a Gaussian paradise.

\section{Examples}
\label{sec:ex}
This section is devoted to examples. We focus to reproduce some interesting and already studied examples to illustrate the power of our method rather than finding a complicated new examples, while now already it should be obvious how our approach can be used to study more complicated cases. More precisely, we study Brownian motion, fractional Brownian motion and related processes together with extensions sub-fractional Brownian motion and bifractional Brownian motion. Furthermore, our main focus is on first order quadratic variation for which we find sufficient conditions for the mesh to obtain almost sure convergence. In this context particularly interesting case for us is bifractional Brownian motion for which we are able to improve the sufficient condition proved in \cite{malukas}. For simplicity we assume that the function $k(\pi_n)$ is bounded.
\subsection{Standard Brownian motion}
\label{subsec:bm}
Let $X=W$ be a standard Brownian motion. Then it is known that the almost sure convergence holds provided that $|\pi_n| = o\left(\frac{1}{\log n}\right)$ (for recent results on the topic see \cite{le-er}). Furthermore, this is a sharp in a sense that one can construct a sequence with $|\pi_n| = O\left(\frac{1}{\log n}\right)$ such that almost sure convergence does not hold. Now the sufficiency of $|\pi_n| = o\left(\frac{1}{\log n}\right)$ follows easily 
from concentration inequality \eqref{eq:concentration} applied to the increments of Brownian motion. Indeed, in the case of standard Brownian motion the covariance matrix $\Gamma^{(n)}$ of the increments is diagonal, and we have
$$
\Vert \Gamma^{(n)}\Vert_1 = \Vert \Gamma^{(n)} \Vert_2 = |\pi_n|.
$$
Note also that if one uses Frobenius norm $\Vert\Gamma^{(n)}\Vert_F$ to obtain the upper bound, we have $\Vert \Gamma^{(n)}\Vert_F = \sqrt{|\pi_n|}$ provided that $\frac{|\pi_n|}{m(\pi_n)} \leq C$. Hence by using Frobenius norm we can only obtain half of the best possible rate even in the case of standard Brownian motion. Finally, it is straightforward to obtain central limit theorem which, of course, is well-known already.
\subsection{Fractional Brownian motion}
Recall that a fractional Brownian motion $B^H$ with Hurst index $H\in(0,1)$ is a continuous centred Gaussian process with covariance function
$$
R(t,s) = \frac{1}{2}\left(t^{2H}+s^{2H}-|t-s|^{2H}\right).
$$
The case $H=\frac12$ reduces to a standard Brownian motion. To obtain $L^p$-convergence of general $\alpha$-variations is straightforward by using Remark \ref{rem:alpha-var}.
\begin{pro}
Let $B^H$ be a fractional Brownian motion with $H\in(0,1)$. Then there exists a constant $C_H$ such that for $\alpha=\frac{1}{H}$ we have
$$
\sum_{t^n_k\in \pi_n} |B_{t_k}^H - B_{t_{k-1}}^H|^\alpha \rightarrow C_H T
$$
in $L^p$ for any $p\geq 1$. 
\end{pro}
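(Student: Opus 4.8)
\emph{Reduction.} The plan is to treat the sum as the $\alpha$-variation of the Gaussian sequence $Y^{(n)}_k = B^H_{t_k}-B^H_{t_{k-1}}$ and to invoke Remark \ref{rem:alpha-var}: once finite $\alpha$-energy and convergence in probability are established, $L^p$-convergence for every $p\ge 1$ is automatic. Each increment is centred Gaussian with variance $(\Delta t_k)^{2H}$, so writing $Y^{(n)}_k=\sigma_k U_k$ with $\sigma_k=(\Delta t_k)^H$ and $U_k$ standard normal, one has $\E|Y^{(n)}_k|^\alpha=\sigma_k^\alpha\,\E|U_k|^\alpha=(\Delta t_k)^{H\alpha}\E|Z|^\alpha$. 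The choice $\alpha=1/H$ makes the exponent $H\alpha=1$, whence $\E|Y^{(n)}_k|^\alpha=C_H\,\Delta t_k$ with $C_H:=\E|Z|^{1/H}<\infty$. Telescoping $\sum_k\Delta t_k=T$ shows that $\sum_k\E|Y^{(n)}_k|^\alpha=C_HT$ \emph{exactly} for every $n$; in particular the $\alpha$-energy is finite and equals $C_HT$.

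\emph{Convergence in probability.} Since the mean is already $C_HT$ for each $n$, it suffices, by Chebyshev's inequality, to prove $\operatorname{Var}\big(\sum_k|Y^{(n)}_k|^\alpha\big)\to0$ as $|\pi_n|\to0$; this in fact yields $L^2$-convergence to $C_HT$. I would expand the variance as $\sum_{k,j}\operatorname{Cov}(|Y^{(n)}_k|^\alpha,|Y^{(n)}_j|^\alpha)$ and bound each covariance through the Hermite expansion of $g(x)=|x|^\alpha$. Denoting by $\rho_{kj}=\E[U_kU_j]$ the correlation of the increments and using that $g$ is even, its degree-one Hermite coefficient vanishes, so
$$
\operatorname{Cov}(g(U_k),g(U_j))=\sum_{q\ge2}c_q^2\,q!\,\rho_{kj}^{\,q},
$$
and since $|\rho_{kj}|\le1$ this is bounded by $\rho_{kj}^2\sum_{q\ge2}c_q^2\,q!=\rho_{kj}^2\operatorname{Var}(g(Z))$. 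Consequently, using $H\alpha=1$ again,
$$
\operatorname{Cov}(|Y^{(n)}_k|^\alpha,|Y^{(n)}_j|^\alpha)=(\Delta t_k\,\Delta t_j)\operatorname{Cov}(g(U_k),g(U_j))\lesssim|\pi_n|^2\,\rho_{kj}^2 .
$$

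\emph{The double sum.} It remains to estimate the correlations and sum. From the covariance structure of $B^H$, i.e.\ $|\partial_{st}d(s,t)|=O(|t-s|^{2H-2})$ as exploited in Theorem \ref{thm:1st_order_fbm}, together with boundedness of $k(\pi_n)$ (so that $\Delta t_k\asymp|\pi_n|$ and $N(\pi_n)\asymp|\pi_n|^{-1}$), integrating over the two increment rectangles yields $|\rho_{kj}|\lesssim(1+|k-j|)^{2H-2}$. Hence
$$
\operatorname{Var}\Big(\sum_k|Y^{(n)}_k|^\alpha\Big)\lesssim|\pi_n|^2\sum_{k,j}\rho_{kj}^2\lesssim|\pi_n|^2\,N(\pi_n)\sum_{m=0}^{N(\pi_n)}(1+m)^{4H-4}.
$$
The series $\sum_m(1+m)^{4H-4}$ converges for $H<\tfrac34$, diverges logarithmically at $H=\tfrac34$, and grows like $N(\pi_n)^{4H-3}$ for $H>\tfrac34$; in every regime the prefactor $|\pi_n|^2N(\pi_n)\asymp|\pi_n|$ dominates, giving a variance of order $|\pi_n|$, $|\pi_n|\log|\pi_n|^{-1}$, or $|\pi_n|^{4-4H}$ respectively, all of which vanish for each fixed $H\in(0,1)$. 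Thus $\sum_k|Y^{(n)}_k|^\alpha\to C_HT$ in probability, and Remark \ref{rem:alpha-var} promotes this to $L^p$-convergence for every $p\ge1$.

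\emph{Main obstacle.} The delicate part is the third step: one must justify the correlation decay $|\rho_{kj}|\lesssim(1+|k-j|)^{2H-2}$ uniformly in $n$ for non-uniform partitions (controlling the near-diagonal contribution of the singular kernel $|t-s|^{2H-2}$ for adjacent increments) and then verify that the slowly decaying correlations when $H$ is close to $1$ still produce a vanishing double sum. The Hermite bound of the second step and the energy computation of the first are routine.
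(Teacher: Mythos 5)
Your proposal is correct, and it is actually more complete than the paper's own treatment. The paper proves this proposition essentially in one line: the increments have finite $\alpha$-energy (equal to $C_HT$ with $C_H=\E|N|^{1/H}$, exactly as you computed), convergence in probability of the $1/H$-variation of fractional Brownian motion is taken as known, and Remark \ref{rem:alpha-var} (the uniform-integrability argument) then upgrades convergence in probability to $L^p$-convergence for every $p\geq 1$. You use the same final step via Remark \ref{rem:alpha-var}, but you do not take the convergence in probability for granted: you prove it by a second-moment computation, expanding $g(x)=|x|^\alpha$ in Hermite polynomials, using evenness of $g$ to kill the first chaos and obtain the bound $\left|\operatorname{Cov}(g(U_k),g(U_j))\right|\leq \rho_{kj}^2\operatorname{Var}(g(Z))$, and then summing the squared correlations $\rho_{kj}^2\lesssim (1+|k-j|)^{4H-4}$, which is a Breuer--Major-type argument. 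This buys two things the paper's shortcut does not give: the argument is self-contained (it does not lean on the classical $1/H$-variation results, which are typically stated for uniform partitions), and it explicitly covers the non-uniform partitions with bounded mesh ratio $k(\pi_n)$ that the paper assumes throughout its examples section, together with an explicit rate ($|\pi_n|$, $|\pi_n|\log|\pi_n|^{-1}$, or $|\pi_n|^{4-4H}$ depending on the regime of $H$) for the variance. The one step you should spell out carefully is the correlation bound you flag as the main obstacle: for $|k-j|\geq 2$ it follows by bounding $|\E[Y_k^{(n)}Y_j^{(n)}]|$ by the area of the rectangle times the supremum of $|\partial_{st}d(s,t)|\lesssim |t-s|^{2H-2}$ over it, with the separation bounded below by $(|k-j|-1)m(\pi_n)\gtrsim(|k-j|-1)|\pi_n|/k(\pi_n)$, while for $|k-j|\leq 1$ one simply uses $|\rho_{kj}|\leq 1$; this is exactly where the bounded mesh-ratio assumption enters, and with it your estimate holds uniformly in $n$.
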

\begin{rem}{\rm 
The exact value of the constant $C_H$ is given by $C_H = \E|N|^\frac{1}{H}$, where $N$ is a standard normal random variable.
}\end{rem}
We now turn to the convergence of quadratic variation which is more interesting for us.
Now it is natural to take $\phi(x) = x^{2H-1}$, since for any partition of $[0,T]$ we have
$$
\sum_{t^n_k\in \pi_n} \frac{\E(B_{t_k}^H - B_{t_{k-1}}^H)^2}{[\Delta t_k]^{2H-1}} = T.
$$
The following result is a direct consequence of Theorem \ref{thm:1st_order_fbm}. 
\begin{pro}
\label{pro:fbm}
Let $B^H$ be a fractional Brownian motion with $H\in(0,1)$. Then
\begin{enumerate}
\item
$$
V_n^B:=\sum_{k=1}^{N(\pi_n)-1} \frac{(B_{t_k}^H - B_{t_{k-1}}^H)^2}{[\Delta t_k]^{2H-1}} \to T
$$ 
in probability and in $L^p$ for any $p\geq 1$. Furthermore, the convergence holds almost surely for any sequence of partitions satisfying $|\pi_n| =o\left(\frac{1}{(\log n)^\gamma}\right)$, where $\gamma = \max\left(\frac{1}{2-2H},1\right)$. 
\item
There exists a constant $C>0$ such that
\begin{equation*}
 \sup_{x\in\R}\left| \P\left(\frac{V_n^B - T}{\sqrt{Var(V_n^B - T)}} < x\right) - \P(Z<x)\right| \leq C|\pi_n|^{\min\left(\frac12,\frac32-2H\right)},
\end{equation*}
where $Z$ is a standard normal random variable.
In particular, central limit theorem holds for all values $H<\frac{3}{4}$.
\end{enumerate}
\end{pro}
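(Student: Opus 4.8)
The plan is to verify the hypotheses of Theorem \ref{thm:1st_order_fbm} for $X = B^H$ and then read off both assertions by simplifying the resulting control function $H(|\pi_n|)$; essentially all the analytic work is already contained in that theorem, so the task reduces to a careful exponent count.

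First I would record the covariance data. For fractional Brownian motion $d(s,t) = \E(B^H_t - B^H_s)^2 = |t-s|^{2H}$, which is smooth off the diagonal, and differentiating for $t \neq s$ gives $\partial_{st} d(s,t) = -2H(2H-1)|t-s|^{2H-2}$, so that $|\partial_{st} d(s,t)| = O(|t-s|^{2H-2})$, exactly the decay required in Theorem \ref{thm:1st_order_fbm}. (The excluded value $H = \tfrac{1}{2}$ is the Brownian case of subsection \ref{subsec:bm} and also follows from Remark \ref{rem:case_half}; one checks it gives the same exponents.) With the natural choice $\phi(x) = x^{2H-1}$ the energy is constant along every partition,
$$
\sum_{k} \frac{\E(B^H_{t_k} - B^H_{t_{k-1}})^2}{(\Delta t_k)^{2H-1}} = \sum_k \frac{(\Delta t_k)^{2H}}{(\Delta t_k)^{2H-1}} = \sum_k \Delta t_k = T,
$$
so \eqref{eq:fin_en_assumption} and \eqref{eq:nontriv_energy} hold with $K = T$ and the limit in part (1) equals $T$.

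Next I would simplify the control function $H(|\pi_n|)$ supplied by Theorem \ref{thm:1st_order_fbm}. Since $\sup_n k(\pi_n) < \infty$, the ratio $\Delta t_j/\Delta t_k$ is uniformly bounded, hence so is the factor $\sup_{k,j}\sqrt{\phi(\Delta t_j)/\phi(\Delta t_k)} = \sup_{k,j}(\Delta t_j/\Delta t_k)^{(2H-1)/2}$. Inserting $d(t_j,t_{j-1}) = (\Delta t_j)^{2H}$ and $\phi(\Delta t_j) = (\Delta t_j)^{2H-1}$, the generic term of the maximum becomes
$$
\frac{(\Delta t_j)^{2H} + O\big((\Delta t_j)^{\min(1,2H)}\big)}{(\Delta t_j)^{2H-1}} = \Delta t_j + O\big((\Delta t_j)^{\min(1,2H)-2H+1}\big).
$$
For $H < \tfrac{1}{2}$ the exponent $\min(1,2H) - 2H + 1$ equals $1$, while for $H > \tfrac{1}{2}$ it equals $2-2H < 1$; in both regimes the slower-decaying term dominates, and using $\Delta t_j \le |\pi_n|$ one obtains $H(|\pi_n|) = O\big(|\pi_n|^{\min(1,2-2H)}\big)$.

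Finally I would translate this rate into the two claims. For part (1), Theorem \ref{thm:1st_order_main}(1) demands $H(|\pi_n|) = o(1/\log n)$, i.e. $|\pi_n|^{\min(1,2-2H)} = o(1/\log n)$; writing $|\pi_n| = o\big((\log n)^{-\gamma}\big)$ this holds precisely when $\gamma\,\min(1,2-2H) \geq 1$, that is $\gamma = \max\big(1, \tfrac{1}{2-2H}\big)$, as stated. For part (2), bounded $k(\pi_n)$ forces $N(\pi_n) = O(|\pi_n|^{-1})$ (the intervals sum to $T$ with comparable lengths), so the Berry--Esseen bound of Theorem \ref{thm:1st_order_main}(2) reads
$$
\sqrt{N(\pi_n)}\, H(|\pi_n|) = O\big(|\pi_n|^{\min(1,2-2H) - 1/2}\big) = O\big(|\pi_n|^{\min(1/2,\,3/2-2H)}\big),
$$
and the exponent $\min(\tfrac12, \tfrac32 - 2H)$ is strictly positive exactly when $H < \tfrac34$, giving the announced CLT range. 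The hard part is purely bookkeeping: one must keep the two regimes $H \lessgtr \tfrac{1}{2}$ straight while collapsing the exponents into the unified $\min(\cdot,\cdot)$ form, since there is no genuine analytic obstacle beyond invoking Theorem \ref{thm:1st_order_fbm}.
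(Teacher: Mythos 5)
Your proposal is correct and follows essentially the same route as the paper: the paper states this proposition as a direct consequence of Theorem \ref{thm:1st_order_fbm} applied with $\phi(x)=x^{2H-1}$ (so that the energy equals $T$ for every partition), with the case $H=\tfrac12$ reducing to the standard Brownian motion treated in subsection \ref{subsec:bm}. Your exponent bookkeeping — $H(|\pi_n|)=O\bigl(|\pi_n|^{\min(1,2-2H)}\bigr)$, hence $\gamma=\max\bigl(1,\tfrac{1}{2-2H}\bigr)$ for almost sure convergence and, via $N(\pi_n)=O(|\pi_n|^{-1})$, the Berry--Esseen rate $|\pi_n|^{\min\left(\frac12,\frac32-2H\right)}$ — is exactly the computation the paper leaves implicit.
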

\begin{rem}{\rm 
Note that in the case $H<\frac12$ we obtain similar sufficient condition as for standard Brownian motion. Indeed, the only difference is that since here the increments are not independent, we have to pose an additional assumption $\sup_{n\geq 1} k(\pi_n)<\infty$ to obtain the optimal condition $o\left(\frac{1}{\log n}\right)$. 
}\end{rem}
\begin{rem}{\rm 
By considering uniform partitions it can be shown that via concentration inequalities one cannot obtain any better result. It would be interesting to know whether the given conditions are optimal similarly as in the case of Brownian motion. However, for Brownian motion the counter-examples are constructed relying on independence of increments and, to the best of our knowledge, there exists no method to attack the problem for general Gaussian process.
}\end{rem}
\begin{rem}{\rm 
The limit theorems for quadratic variations of fractional Brownian motion are extensively studied in the literature. However, most of the related studies rely on uniform partitions and focus on generalisations, e.g. to study Hermite variations or weighted variations rather than generalising the sequence of partitions. Furthermore, to recover the central limit theorem in the case $H<\frac34$ our approach is based only on simple linear algebra. For this reason our approach may be more applicable to generalise the results for arbitrary Gaussian processes while the obvious drawback is that it cannot provide a full answer to the problem. Finally, to the best of our knowledge the non-uniform partition are not widely studied in the literature. 
}\end{rem}
\begin{rem}{\rm 
It is known that in the critical case $H=\frac34$ we also have convergence towards a normal random variable (see, e.g. \cite{nou-rev} and references therein) with the only difference in the rate, i.e. the variance is of order $\frac{\log n}{n}$ instead of $\frac{1}{n}$. Given a priori knowledge that the variance is of order $\frac{\log n}{n}$ it is straightforward to recover also this critical case by Corollary \ref{cor:lindeberg}. Hence again, it is sufficient to study the asymptotic behaviour of the variance. 
}\end{rem}
\subsection{sub-fractional Brownian motion}
The sub-fractional Brownian motion $G^H$ with parameter $H\in(0,1)$ is a centred Gaussian process with covariance function
$$
R(s,t) = s^{2H} + t^{2H}-\frac{1}{2}\left[s^{2H}+t^{2H}-|t-s|^{2H}\right].
$$
Note that as for fractional Brownian motion, value $H=\frac12$ corresponds to a standard Brownian motion. 
\begin{pro}
Let $G^H$ be a sub-fractional Brownian motion with $H\in(0,1)$. Then
\begin{enumerate}
\item
$$
V_n^G:=\sum_{t^n_k\in \pi_n} \frac{\left(G_{t_k}^H - G_{t_{k-1}}^H\right)^2}{[\Delta t_k]^{2H-1}}\to T
$$
in probability and in $L^p$ for any $p\geq 1$. Furthermore, the convergence holds almost surely for any sequence satisfying $|\pi_n| =o\left(\frac{1}{(\log n)^\gamma}\right)$, where $\gamma = \max\left(\frac{1}{2-2H},1\right)$. 
\item
There exists a constant $C>0$ such that 
\begin{equation*}
 \sup_{x\in\R}\left| \P\left(\frac{V_n^G - T}{\sqrt{Var(V_n^G - T)}} < x\right) - \P(Z<x)\right| \leq C|\pi_n|^{\min\left(\frac12,\frac32-2H\right)},
\end{equation*}
where $Z$ is a standard normal random variable.
In particular, central limit theorem holds for all values $H<\frac{3}{4}$.
\end{enumerate}
\end{pro}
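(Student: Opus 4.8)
The plan is to follow the proof of Proposition \ref{pro:fbm} almost verbatim: setting $Y^{(n)}_k = (G^H_{t_k}-G^H_{t_{k-1}})/\sqrt{\phi(\Delta t_k)}$ with the natural choice $\phi(x)=x^{2H-1}$, it suffices to verify the hypotheses of Theorem \ref{thm:1st_order_fbm}, namely that $d(s,t)=\E(G^H_t-G^H_s)^2$ is $C^{1,1}$ outside the diagonal with $|\partial_{st}d(s,t)|=O(|t-s|^{2H-2})$, and that $\phi$ produces finite non-zero energy with limit $T$. The only genuinely new ingredient is the explicit behaviour of the incremental variance of $G^H$ and of its mixed second derivative; once these are in hand the estimates for $H(|\pi_n|)$ and the resulting rates come out exactly as in the fractional case.

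First I would compute the incremental variance from the covariance of $G^H$, obtaining
\begin{equation*}
d(s,t) = (s+t)^{2H} + |t-s|^{2H} - 2^{2H-1}\left(s^{2H}+t^{2H}\right),
\end{equation*}
which is $C^{\infty}$ off the diagonal on $(0,T]^2$. Since the pure terms $s^{2H}$ and $t^{2H}$ do not contribute to the mixed derivative, one gets for $0<s<t$
\begin{equation*}
\partial_{st}d(s,t) = 2H(2H-1)\left[(s+t)^{2H-2} - (t-s)^{2H-2}\right].
\end{equation*}
Because $2H-2<0$ and $s+t>t-s>0$, the bracketed difference is nonpositive and dominated in absolute value by $(t-s)^{2H-2}$; hence $|\partial_{st}d(s,t)|\leq 2H|2H-1|\,|t-s|^{2H-2}$ uniformly on $[0,T]^2$, the extra term $(s+t)^{2H-2}$ only improving matters near the origin. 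This is precisely the hypothesis $|\partial_{st}d(s,t)|=O(|t-s|^{2H-2})$ of Theorem \ref{thm:1st_order_fbm}.

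Next I would check the energy. Expanding $d(s,s+h)$ as $h\to 0$, both the $s^{2H}$ contributions and the terms linear in $h$ cancel, leaving $d(s,s+h)\sim h^{2H}$; thus $d(t,s)\sim|t-s|^{2H}$ exactly as for fractional Brownian motion. Consequently $\sum_k d(t_k,t_{k-1})/\phi(\Delta t_k)\sim\sum_k\Delta t_k\to T$, which both verifies the finite non-zero energy condition \eqref{eq:nontriv_energy} and identifies the limit as $T$. With $d(t_j,t_{j-1})$ of order $(\Delta t_j)^{2H}$ and $\sup_{k,j}\sqrt{\phi(\Delta t_j)/\phi(\Delta t_k)}$ bounded (by $\sup_n k(\pi_n)<\infty$), the function $H(|\pi_n|)$ of Theorem \ref{thm:1st_order_fbm} is of order $|\pi_n|$ when $H<\frac12$ and of order $|\pi_n|^{2-2H}$ when $H>\frac12$. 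Feeding these into the almost sure criterion $H(|\pi_n|)=o(1/\log n)$ gives the stated exponent $\gamma=\max\left(\frac{1}{2-2H},1\right)$, while $\sqrt{N(\pi_n)}\,H(|\pi_n|)$ is of order $|\pi_n|^{\min(1/2,\,3/2-2H)}$, which yields the Berry--Esseen bound and the central limit theorem for $H<\frac34$; the excluded value $H=\frac12$ is the standard Brownian motion already treated in subsection \ref{subsec:bm}.

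The hard part will be the incremental-variance computation itself: unlike fractional Brownian motion, $G^H$ does not have stationary increments, so $d(s,t)$ carries the extra non-stationary contributions displayed above, and one must check both that these cancel to leading order (giving $d\sim|t-s|^{2H}$, the reason the rates coincide with the fractional case) and that they do not spoil the uniform bound on $\partial_{st}d$ near the origin. Once these two points are settled the remainder of the argument is identical to the fractional Brownian case, which is exactly why the statement reproduces Proposition \ref{pro:fbm}.
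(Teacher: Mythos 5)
Your proposal is correct and takes essentially the same approach as the paper: verify the hypotheses of Theorem \ref{thm:1st_order_fbm} for $d(s,t)=\E(G^H_t-G^H_s)^2$, using precisely the observation that the mixed derivative splits into a stationary part $|t-s|^{2H-2}$ and a non-stationary part $(s+t)^{2H-2}$ with $(s+t)^{2H-2}\leq |t-s|^{2H-2}$, and then read off the rates exactly as for fractional Brownian motion. The paper's own proof is in fact terser (it only records the bounds $|\partial_{st}d_G|\leq C(t+s)^{2H-2}+C|t-s|^{2H-2}$ and $d_G(s,t)\leq C|t-s|^{2H}$, handles $H=\tfrac12$ by reduction to Brownian motion, and invokes the fBm case), so your explicit formulas for $d$ and $\partial_{st}d$ and your check that the energy converges to $T$ merely add detail along the same route.
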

\begin{proof}
Note that the case $H=\frac12$ is already covered since it reduces back to the standard Brownian motion. Hence assume $H\neq \frac12$ and let $t>s$. We have 
$$
\partial_{st}d_G(t,s) \leq C (t+s)^{2H-2} +C|t-s|^{2H-2}
$$
for some constant $C$ and $d_G(s,t) \leq C|t-s|^{2H}$. Now 
$
(t+s)^{2H-2} \leq |t-s|^{2H-2}
$
from which the result follows immediately as in the case of fractional Brownian motion.
\end{proof}
\begin{rem}{\rm 
We remark that the above result was already given in \cite{malukas} with the same rates although there the condition for the case $H=\frac12$ was $|\pi_n||\log|\pi_n|| = o\left(\frac{1}{\log n}\right)$ which would follow from Remark \ref{rem:case_half}. Obviously however, in this case we have standard Brownian motion so that $|\pi_n| = o\left(\frac{1}{\log n}\right)$ is sufficient. Note also that in this case one cannot obtain better via concentration inequalities. Indeed, this comes from the ''fractional Brownian part'' $|t-s|^{2H}$. 
}\end{rem}
\subsection{Bifractional Brownian motion}
Particularly interesting case for us is Bifractional Brownian motion which was also studied in \cite{malukas}. 
However, with our method we are able to improve the results of \cite{malukas}.

The bifractional Brownian motion is an extension of fractional Brownian motion first introduced by \cite{h-v} and later analysed e.g. by \cite{ru-tu}. 
Finally, bifractional Brownian motion was extended for values $K\in(1,2)$ such that $HK\in(0,1)$ by \cite{bar-seb}.
\begin{dfn}
The bifractional Brownian motion is a centred Gaussian process $B^{H,K}$ with $B^{H,K}_0=0$ and covariance function
$$
R(t,s) = \frac{1}{2^K}\left((t^{2H}+s^{2H})^K-|t-s|^{2HK}\right)
$$
with $H\in(0,1)$ and $K\in(0,2)$ such that $HK\in(0,1)$.
\end{dfn}
\begin{rem}{\rm 
Note that $K=1$ corresponds to ordinary fractional Brownian motion. It is straightforward to check that $B^{H,K}$ is $HK$-self-similar and Hölder continuous of any order $\gamma<HK$. For more details on the properties of bifractional Brownian motion we refer to \cite{ru-tu} and references therein.
}\end{rem}
While the main emphasis in \cite{ru-tu} was integration via regularisation it was pointed out that one can prove that $\alpha$-variations exists as a limit in $L^1$ in our sense. Hence the following result is obvious from remark \ref{rem:alpha-var}.
\begin{pro}
Let $B^{H,K}$ be a bifractional Brownian motion with $H\in(0,1)$ and $K\in(0,2)$ such that $HK\in(0,1)$. Then there exists a constant $C_{H,K}$ such that for $\alpha=\frac{1}{HK}$ we have
$$
\sum_{t^n_k\in \pi_n} \left|B_{t_k}^{H,K} - B_{t_{k-1}}^{H,K}\right|^\alpha \rightarrow C_{H,K} T
$$
in $L^p$ for any $p\geq 1$. 
\end{pro}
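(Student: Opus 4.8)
The plan is to deduce the statement from Remark~\ref{rem:alpha-var} together with the $L^1$-convergence recorded in \cite{ru-tu}. Write $Y^{(n)}_k = B^{H,K}_{t_k} - B^{H,K}_{t_{k-1}}$ for the increments along $\pi_n$ and fix $\alpha = \frac{1}{HK}$. Remark~\ref{rem:alpha-var} says that once we know that $Y$ has finite $\alpha$-energy and that $\sum_{t^n_k\in\pi_n}|Y^{(n)}_k|^\alpha$ converges in probability, the convergence automatically upgrades to $L^p$ for every $p\geq 1$. Thus the whole proof reduces to verifying these two ingredients and identifying the limiting constant.

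For the finite-energy part I would first pin down the local behaviour of the incremental variance. From the explicit covariance one computes
$$
d(s,t) = \E\left(B^{H,K}_t - B^{H,K}_s\right)^2 = t^{2HK} + s^{2HK} - 2^{1-K}\left(t^{2H}+s^{2H}\right)^K + 2^{1-K}|t-s|^{2HK}.
$$
The first three terms form a function that vanishes identically on the diagonal $t=s$ and is smooth off the origin; a short computation shows that its first derivative along the diagonal vanishes as well, so this smooth part is $O(|t-s|^2)$ near the diagonal. Since $2HK < 2$, the singular term dominates and
$$
d(s,t) \sim 2^{1-K}|t-s|^{2HK} \qquad (|t-s|\to 0).
$$

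With this asymptotic in hand the energy computation is immediate. For a centred Gaussian variable of variance $\sigma_k^2 := d(t_{k-1},t_k)$ one has $\E|Y^{(n)}_k|^\alpha = \E|N|^\alpha\,(\sigma_k^2)^{\alpha/2}$, and because $HK\alpha = 1$ the relation above gives $(\sigma_k^2)^{\alpha/2} \sim (2^{1-K})^{\alpha/2}\Delta t_k$. Telescoping $\sum_k \Delta t_k = T$ then yields
$$
\sum_{t^n_k\in\pi_n}\E|Y^{(n)}_k|^\alpha \longrightarrow (2^{1-K})^{\frac{1}{2HK}}\,\E|N|^{\frac{1}{HK}}\,T =: C_{H,K}\,T,
$$
so $Y$ has finite $\alpha$-energy, and note that $K=1$ recovers $C_H=\E|N|^{1/H}$. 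The convergence in probability is exactly the $L^1$-statement recalled from \cite{ru-tu}; since the expectations just computed converge to $C_{H,K}T$, the (deterministic) limit must be this constant. Remark~\ref{rem:alpha-var} then promotes the convergence to $L^p$ for all $p\geq 1$, which finishes the argument.

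The main obstacle I anticipate is the \emph{uniformity} of the asymptotic $d(s,t)\sim 2^{1-K}|t-s|^{2HK}$: the $O(|t-s|^2)$ bound for the smooth part comes with a constant depending on the base point, and this constant degenerates as the base point approaches $0$, where the derivatives of $(t^{2H}+s^{2H})^K$ blow up and in fact $d(s,0)=s^{2HK}$ carries the different prefactor $1$ instead of $2^{1-K}$. I would circumvent this by isolating the single interval abutting the origin, whose contribution is one lone term of order $\Delta t_1\to 0$, and by invoking the standing assumption $\sup_n k(\pi_n)<\infty$ to keep the ratio of neighbouring mesh lengths bounded, so that the multiplicative error terms in the remaining sum are uniformly $o(1)$ and the limit is unaffected.
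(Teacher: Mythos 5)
Your proposal is correct and takes essentially the same route as the paper: the paper likewise deduces the claim by combining the $L^1$-convergence (hence convergence in probability) pointed out in \cite{ru-tu} with the upgrade to $L^p$ from Remark \ref{rem:alpha-var}, leaving the finite-$\alpha$-energy verification and the explicit value of $C_{H,K}$ implicit, which you work out correctly. The only point you do not address is that \cite{ru-tu} covers only $K\in(0,1]$; for $K\in(1,2)$ the paper notes in a subsequent remark that the claim follows by repeating the same arguments.
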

\begin{rem}{\rm 
In \cite{ru-tu} the authors considered only the case $K\in(0,1]$. However, it is straightforward to obtain the claim for the case $K>1$ by repeating the arguments. 
}\end{rem}
The next theorem studies the quadratic variation of bifractional Brownian motion.
\begin{pro}
Let $B^{H,K}$ be a bifractional Brownian motion with $H\in(0,1), K\in(0,2)$ and $HK\in(0,1)$. Then
\begin{enumerate}
\item
$$
V_n^{H,K}:= \sum_{t^n_k\in \pi_n} \frac{\left(B_{t_k}^{H,K} - B_{t_{k-1}}^{H,K}\right)^2}{[\Delta t_k]^{2HK-1}} \to 2^{1-K}T
$$
in probability and in $L^p$ for any $p\geq 1$. Furthermore, the convergence holds almost surely for any sequence of partitions satisfying 
$|\pi_n| =o\left(\frac{1}{(\log n)^\gamma}\right)$, where;
\begin{itemize}
\item
$\gamma = \max\left(\frac{1}{2-2HK},1\right)$ for $K\in(0,1]$,
\item
$\gamma = \frac{1}{\min(1,2H)+1-2HK}$ for $K\in(1,2)$.
\end{itemize}
\item
In the case $K\in(0,1]$ we have
\begin{equation*}
 \sup_{x\in\R}\left| \P\left(\frac{V_n^{H,K} - 2^{1-K}T}{\sqrt{Var(V_n^{H,K} - 2^{1-K}T)}} < x\right) - \P(Z<x)\right| \leq C|\pi_n|^{\min\left(\frac12,\frac32-2HK\right)}
\end{equation*}
for some constant $C$ and a standard normal random variable $Z$.
In particular, central limit theorem holds for all values $HK<\frac{3}{4}$.
\item
In the case $K\in(1,2)$ we have
\begin{equation*}
 \sup_{x\in\R}\left|\P\left(\frac{V_n^{H,K} - 2^{1-K}T}{\sqrt{Var(V_n^{H,K} - 2^{1-K}T)}} < x\right) - \P(Z<x)\right|\leq C|\pi_n|^{\min\left(1,2H\right)-2HK+\frac12}.
\end{equation*}
\end{enumerate}
\end{pro}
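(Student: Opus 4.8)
The plan is to realise this as an instance of the first-order machinery by taking $\phi(x)=x^{2HK-1}$ and treating $HK$ as the effective self-similarity index, so that every claim follows from Theorems \ref{thm:1st_order_main} and \ref{thm:1st_order_fbm} together with Theorems \ref{thm:QV-conv-general} and \ref{thm:CLT_main}. First I would record the diagonal value $R(t,t)=\frac{1}{2^K}(2t^{2H})^K=t^{2HK}$, which already exhibits $HK$-self-similarity. Writing $d(t,s)=R(t,t)+R(s,s)-2R(t,s)$ gives
\begin{equation*}
d(t,s)=t^{2HK}+s^{2HK}-2^{1-K}(t^{2H}+s^{2H})^K+2^{1-K}|t-s|^{2HK},
\end{equation*}
and since the first three terms cancel to second order on the diagonal (their value and first $t$-derivative both vanish at $t=s$) one finds $d(t,s)\sim 2^{1-K}|t-s|^{2HK}$ as $|t-s|\to0$. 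This fixes the normalisation $\phi(\Delta t_k)=[\Delta t_k]^{2HK-1}$, identifies the effective index $HK$, and, after checking that the smooth remainder $t^{2HK}+s^{2HK}-2^{1-K}(t^{2H}+s^{2H})^K$ contributes negligibly to the energy sum, yields the limit $2^{1-K}T$.

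Next I would compute $\partial_{st}d(t,s)=-2\partial_{st}R(t,s)$, which for $t>s$ splits into two pieces: a \emph{fractional part} proportional to $(t-s)^{2HK-2}$, coming from $|t-s|^{2HK}$, and a \emph{smooth part} proportional to $(t^{2H}+s^{2H})^{K-2}(st)^{2H-1}$ with coefficient $K(K-1)$, coming from $(t^{2H}+s^{2H})^K$. The fractional part is exactly the singular kernel of a fractional Brownian motion of index $HK$ and, as in the proof of Theorem \ref{thm:1st_order_fbm}, contributes $O\bigl([\Delta t_j]^{\min(1,2HK)}\bigr)$ to the off-diagonal row sum $\sum_{k\neq j}\int_{t_{k-1}}^{t_k}\int_{t_{j-1}}^{t_j}|\partial_{st}d|\,\d s\,\d t$. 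The smooth part is bounded near the diagonal (it behaves like $u^{2HK-2}$ there), so its only effect is an additional off-diagonal contribution.

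For $K\in(0,1]$ the coefficient $K(K-1)\le0$, and after integrating the smooth part in $t$ via the substitution $u=t^{2H}$ one checks that its contribution is again $O\bigl([\Delta t_j]^{\min(1,2HK)}\bigr)$; the whole estimate is therefore fractional-Brownian-like with index $HK$. Feeding $\max_j d(t_j,t_{j-1})/\phi(\Delta t_j)\sim\Delta t_j$ together with the $[\Delta t_j]^{\min(1,2HK)}/\phi(\Delta t_j)$ term into Theorem \ref{thm:1st_order_fbm} gives $H(|\pi_n|)\asymp|\pi_n|^{\min(1,2HK)-2HK+1}$, equal to $|\pi_n|$ when $2HK\le1$ and to $|\pi_n|^{2-2HK}$ when $2HK>1$; the threshold $H(|\pi_n|)\to0$ yields convergence in probability and in $L^p$, the threshold $o(1/\log n)$ reproduces $\gamma=\max\bigl(\tfrac{1}{2-2HK},1\bigr)$ for the almost sure statement, and $\sqrt{N(\pi_n)}\,H(|\pi_n|)$ gives the Berry--Esseen rate $|\pi_n|^{\min(1/2,\,3/2-2HK)}$ and hence the central limit theorem for $HK<\tfrac34$, exactly as for fractional Brownian motion.

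The genuinely different case is $K\in(1,2)$, and this is where I expect the main obstacle. Now $K(K-1)>0$ and the smooth part is no longer dominated by the fractional part: substituting $u=t^{2H}$ shows that $\int_0^T(t^{2H}+s^{2H})^{K-2}t^{2H-1}\,\d t$ is \emph{bounded} uniformly in $s$ (precisely because $K-1>0$, whereas for $K<1$ it blows up like $s^{2H(K-1)}$ near the origin, which is the structural reason the two cases differ). Hence the smooth part contributes $C\int_{t_{j-1}}^{t_j}s^{2H-1}\,\d s=O\bigl([\Delta t_j]^{\min(1,2H)}\bigr)$, the worst value occurring near the origin and estimated with $|a^\gamma-b^\gamma|\le|a-b|^\gamma$. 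Since $K>1$ forces $2HK>2H$, this exponent $\min(1,2H)$ is strictly smaller than the fractional exponent $\min(1,2HK)$, so the smooth part now \emph{dominates} the row sum and one obtains $H(|\pi_n|)\asymp|\pi_n|^{\min(1,2H)-2HK+1}$. This gives the almost sure exponent $\gamma=\frac{1}{\min(1,2H)+1-2HK}$ and, via $\sqrt{N(\pi_n)}\,H(|\pi_n|)\asymp|\pi_n|^{\min(1,2H)-2HK+1/2}$, the stated Berry--Esseen bound. The delicate points are the uniform-in-$s$ boundedness of the inner integral for $K>1$, the near-origin estimate producing $\min(1,2H)$, and the verification that the smooth remainder in $d$ does not perturb the limit $2^{1-K}T$; none of these requires Malliavin calculus, only the integral representation of the increment covariance and elementary estimates.
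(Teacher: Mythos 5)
Your proposal is correct and follows essentially the same route as the paper's own proof: the same normalisation $\phi(x)=x^{2HK-1}$, the same splitting of $\partial_{st}d$ into the fractional kernel $|t-s|^{2HK-2}$ and the smooth kernel $(ts)^{2H-1}(t^{2H}+s^{2H})^{K-2}$, the same substitution $u=t^{2H}$ reducing the latter to the kernel $(x+y)^{K-2}$, and the same case distinction (for $K<1$ the exponent $\min(1,2HK)$, for $K>1$ the uniform bound on the inner integral giving $\min(1,2H)$), all fed into Theorem \ref{thm:1st_order_fbm} to produce the stated rates. If anything, you are slightly more careful than the paper in tracking the constant in $d(t,s)\sim 2^{1-K}|t-s|^{2HK}$, which is what produces the limit $2^{1-K}T$.
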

\begin{proof}
We assume $K\neq 1$ since the case $K=1$ reduces to ordinary fractional Brownian motion treated in Proposition \ref{pro:fbm}.

The function $d(s,t) = \E[B_t - B_s]^2$ is differentiable outside diagonal and we have
$$
\partial_{st}d(s,t) = C_1 |t-s|^{2HK-2} + C_2 \frac{(ts)^{2H-1}}{(t^{2H}+s^{2H})^{2-K}}
$$
for some unimportant constants $C_1$ and $C_2$. Furthermore, we have
$$
d(s,t) \sim |t-s|^{2HK}
$$
as $|t-s|\to 0$ which also corresponds to fractional Brownian motion. Now the term $|t-s|^{2HK-2}$ can be treated as in Theorem \ref{thm:1st_order_fbm}, and for $HK=\frac12$ this term vanishes. Consider next the term
$$
\frac{(ts)^{2H-1}}{(t^{2H}+s^{2H})^{2-K}}.
$$
We are left to bound integrals
$$
I_1 = \int_0^{t_{j-1}}\int_{t_{j-1}}^{t_j} \frac{(ts)^{2H-1}}{(t^{2H}+s^{2H})^{2-K}}\d s \d t
$$
and
$$
I_2 = \int_{t_{j}}^{T}\int_{t_{j-1}}^{t_j} \frac{(ts)^{2H-1}}{(t^{2H}+s^{2H})^{2-K}}\d s \d t.
$$
We consider only $I_1$ since $I_2$ can be treated similarly, and we denote by $C$ any unimportant constant which may vary from line to line. By change of variables $x = t^{2H}$, $y=s^{2H}$ and Tonelli's theorem we have
\begin{equation*}
\begin{split}
I_1 &= C\int_0^{t_{j-1}^{2H}}\int_{t_{j-1}^{2H}}^{t_j^{2H}} \frac{(xy)^{\frac{1}{2H}(2H-1)}}{(x+y)^{2-K}}x^{\frac{1}{2H}-1}y^{\frac{1}{2H}-1}\d x \d y\\
&=C\int_0^{t_{j-1}^{2H}}\int_{t_{j-1}^{2H}}^{t_j^{2H}}(x+y)^{K-2}\d x \d y\\
&=C\int_{t_{j-1}^{2H}}^{t_j^{2H}} \int_0^{t_{j-1}^{2H}}(x+y)^{K-2}\d x \d y\\
&=C\int_{t_{j-1}^{2H}}^{t_j^{2H}} \int_0^{t_{j-1}^{2H}}(t_{j-1}^{2H}+y)^{K-1}\d x \d y\\
&-C\int_{t_{j-1}^{2H}}^{t_j^{2H}} y^{K-1} \d y
\end{split}
\end{equation*}
Now for $K>1$ we have $y^{K-1} \leq C$ which leads to
$$
\int_{t_{j-1}^{2H}}^{t_j^{2H}} y^{K-1} \d y \leq C(t_j^{2H} - t_{j-1}^{2H}) \leq C|\pi_n|^{\min(1,2H)}
$$
by the fact that for $T\geq a>b\geq 0$ and $\gamma\in(0,1)$ we have $a^{\gamma}-b^{\gamma} \leq (a-b)^{\gamma}$ and for $\gamma\geq 1$ we have $a^{\gamma}-b^{\gamma} \leq C(a-b)$ by the mean value theorem. Similarly, for $K<1$ we have
$$
\int_{t_{j-1}^{2H}}^{t_j^{2H}} y^{K-1} \d y \leq t_{j}^{2HK} - t_{j-1}^{2HK} \leq |\pi_n|^{\min(1,2HK)}.
$$
Treating other integrals similarly the result follows by Theorem \ref{thm:1st_order_fbm} with $\phi(x) =x^{2HK-1}$.
\end{proof}
\begin{rem}{\rm 
It may look like that for case $K>1$ one gets better (i.e. larger exponent) by computing 
$$
\int_{t_{j-1}^{2H}}^{t_j^{2H}} y^{K-1} \d y = C(t_j^{2HK} - t_{j-1}^{2HK}) \leq C|\pi_n|^{\min(1,2HK)}.
$$
However, this analysis cannot be used to cover, e.g. integral $\int_{t_{j-1}^{2H}}^{t_j^{2H}} (T+y)^{K-1} \d y$.
}\end{rem}
\begin{rem}{\rm 
To compare our results with the existing literature, in \cite{malukas} it was proved that almost sure convergence holds, in our notation, for value $\gamma=\frac{1}{1-HK}$ for the whole range $H\in(0,1)$ and $K\in(0,1]$. Note that by putting $K=1$ and $H=\frac12$ we have a standard Brownian motion, and this produces half of the best possible rate. Now in our result we have value $\gamma =\frac{1}{2-2HK}$ which is twice better compared to $\frac{1}{1-HK}$. Furthermore, we obtained even better for the range $2HK>1$. Note also that, to the best of our knowledge, the case $K>1$ is not studied in the literature before the present paper.
}\end{rem}
\begin{rem}{\rm 
Particularly interesting case is $HK=\frac12$. In this case the quadratic variation exists in the ordinary sense which allows one to develop stochastic calculus with respect to $B^{H,K}$ although $B^{H,K}$ is not a semimartingale \cite{ru-tu} if $K\in(0,1)$. Now for this process we obtain similar condition to the one of standard Brownian motion. On the other hand, if $K\in(1,2)$ and $HK=\frac12$, then the process $B^{H,K}$ is a semimartingale. However, in this case we only obtain condition $|\pi_n|^{2H}=o\left(\frac{1}{\log n}\right)$ since $K>1$ and $HK=\frac12$ implies $H<\frac12$.
}\end{rem}
\begin{rem}{\rm 
In the case $K\in(0,1]$ we obtain a sufficient condition $HK<\frac34$ for the central limit theorem to hold which of course is not at all surprising. Similarly, in the case $K\in(1,2)$ we obtain sufficient condition $HK<\frac34$ provided that $H>\frac12$. However, in the case $H<\frac12$ something odd seems to happen. Indeed, if $HK\geq\frac34$, then $2H+\frac12 - 2HK \leq 0$ so that the given Berry-Esseen bound does not converge to zero. On the other hand, even if $HK<\frac34$ it is not necessarily true that $2H+\frac12 - 2HK>0$ so that condition $HK<\frac34$ is no longer sufficient. Indeed, even in the semimartingale case $2HK=1$ we have $2H+\frac12 -2HK\leq 0$ for values $H\in\left(0,\frac14\right]$. 
}\end{rem}

\textbf{Acknowledgements}
Lauri Viitasaari was partially funded by Emil Aaltonen Foundation.

\appendix
\section{Proof of Lemma \ref{lma:2nd_4th}}
A simple application of \eqref{eq:basic_cumulant} yields
$$
\E V_n^2 = \sum_{k,j=1}^n \left(\E\left[Y^{(n)}_kY^{(n)}_j\right]\right)^2.
$$
Next we compute $\E V_n^4$. We have
\begin{equation}
\label{eq:product}
V_n^4 = \sum_{i,j,k,l=1}^n \prod_{p\in\{i,j,k,l\}}\left[\left(Y^{(n)}_p\right)^2 - \E\left(Y^{(n)}_p\right)^2\right].
\end{equation}
Recall next that all information of a Gaussian vector is encoded to the covariance matrix $\Gamma^{(n)}$ so that $k$-moments of a centred Gaussian vector $(Y_1,Y_2,\ldots,Y_n)$ can be computed via formula
$$
\E[Y_1^{k_1}Y_2^{k_2}\ldots Y_n^{k_n}] = \sum_{\sigma}\E[Y_{\sigma(1)}Y_{\sigma(2)}]\ldots \E[Y_{\sigma(n-1)}Y_{\sigma(n)}],
$$
where the summation is over all permutations $\sigma$ of numbers $\{1,2,\ldots,n\}$, hence producing $n!$ terms. Applying this to vector 8-dimensional vector $\left(Y_k^{(n)},Y_k^{(n)},Y_j^{(n)},\ldots, Y_l^{(n)}\right)$ and taking expectation on 
$$
\left[\left(Y^{(n)}_k\right)^2\left(Y^{(n)}_i\right)^2\left(Y^{(n)}_j\right)^2\left(Y^{(n)}_l\right)^2\right]
$$
we obtain terms of form
$$
A_1(\sigma) = \left[\E\left[Y^{(n)}_{\sigma(1)}Y^{(n)}_{\sigma(2)}\right]\right]^2\left[\E\left[Y^{(n)}_{\sigma(3)}Y^{(n)}_{\sigma(4)}\right]\right]^2,
$$
$$
A_2(\sigma) = \E\left[Y^{(n)}_{\sigma(1)}\right]^2\E\left[Y^{(n)}_{\sigma(2)}\right]^2\left[\E\left[Y^{(n)}_{\sigma(3)}Y^{(n)}_{\sigma(4)}\right]\right]^2,
$$
$$
A_3(\sigma) = \E\left[Y^{(n)}_{\sigma(1)}\right]^2\E\left[Y^{(n)}_{\sigma(2)}\right]^2\E\left[Y^{(n)}_{\sigma(3)}\right]^2\E\left[Y^{(n)}_{\sigma(4)}\right]^2,
$$
$$
A_4(\sigma) = \E\left[Y^{(n)}_{\sigma(1)}Y^{(n)}_{\sigma(2)}\right]\E\left[Y^{(n)}_{\sigma(2)}Y^{(n)}_{\sigma(3)}\right]\E\left[Y^{(n)}_{\sigma(3)}Y^{(n)}_{\sigma(1)}\right]\E\left[Y^{(n)}_{\sigma(4)}\right]^2,
$$
and
$$
A_5(\sigma) = \E\left[Y^{(n)}_{\sigma(1)}Y^{(n)}_{\sigma(2)}\right]\E\left[Y^{(n)}_{\sigma(2)}Y^{(n)}_{\sigma(3)}\right]\E\left[Y^{(n)}_{\sigma(3)}Y^{(n)}_{\sigma(4)}\right]\E\left[Y^{(n)}_{\sigma(4)}Y^{(n)}_{\sigma(1)}\right],
$$
where $\sigma=(\sigma(1),\sigma(2),\sigma(3),\sigma(4))$ can be any permutation of indices $\{i,j,k,l\}$. Next we note by symmetry of covariance and summing over symmetric set $\{1\leq i,j,k,l\leq n\}$ we obtain that for each $p=1,\ldots, 5$ and any permutation $\sigma$ we have
$
\sum_{i,j,k,l=1}^n A_p(\sigma) = \sum_{i,j,k,l=1}^n A_p(\sigma_0),
$
where $\sigma_0$ is any fixed permutation. For example, we obviously have
$$
\sum_{i,j,k,l=1}^n \left[\E\left[Y^{(n)}_{i}Y^{(n)}_{j}\right]\right]^2\left[\E\left[Y^{(n)}_{k}Y^{(n)}_{l}\right]\right]^2 = \sum_{i,j,k,l=1}^n \left[\E\left[Y^{(n)}_{i}Y^{(n)}_{k}\right]\right]^2\left[\E\left[Y^{(n)}_{j}Y^{(n)}_{l}\right]\right]^2.
$$
Consequently we obtain
$$
\sum_{i,j,k,l=1}^n \E\left[\left(Y^{(n)}_k\right)^2\left(Y^{(n)}_i\right)^2\left(Y^{(n)}_j\right)^2\left(Y^{(n)}_l\right)^2\right]\\
= \sum_{i,j,k,l=1}^n \sum_{p=1}^5 a_p A_p(\sigma_0)
$$
for arbitrary reference permutation $\sigma_0$ and some weights $a = (a_1,\ldots,a_5)$. Note also that the weights $a_p,p=1,\ldots,5$ are independent of indices $i,j,k,l$ and the underlying Gaussian process. Now treating rest of the terms in $\prod_{p\in\{i,j,k,l\}}\left[\left(Y^{(n)}_p\right)^2 - \E\left(Y^{(n)}_p\right)^2\right]$ similarly we conclude that
$$
\E V_n^4 = \sum_{i,j,k,l=1}^n \sum_{p=1}^5 b_p A_p(\sigma_0)
$$
with some weights $b=(b_1,\ldots,b_5)$ independent of $i,j,k,l$ and the underlying Gaussian process. Next we claim that $b=(12,0,0,0,24)$. Of course the weight vector $b$ could be computed exactly via combinatorial arguments but we wish to use a more subtle argument by relying on the classical central limit theorem for a sequence of independent standard normal random variables. We begin by computing the values $b_4$ and $b_5$ which are relatively easy to compute directly. First note that terms $A_5$ are produced only by the term $\E\left[\left(Y^{(n)}_k\right)^2\left(Y^{(n)}_i\right)^2\left(Y^{(n)}_j\right)^2\left(Y^{(n)}_l\right)^2\right]$. Furthermore, terms of form $A_5$ are produced by permutations of indices $\{i,j,k,l\}$ which gives $b_5=4! = 24$. Consider next $b_4$. Terms of form $A_4$ are produced from $\E\left[\left(Y^{(n)}_k\right)^2\left(Y^{(n)}_i\right)^2\left(Y^{(n)}_j\right)^2\left(Y^{(n)}_l\right)^2\right]$ by first picking one variable, $Y^{(n)}_k$ say, to get $\E\left[Y^{(n)}_k\right]^2$ and then organising the remaining three into $3!=6$ ways which produces $4!=24$ (the first one can be picked in 4 ways). On the other hand, computing the product \eqref{eq:product} we obtain 4 terms of form $\E\left[\left(Y^{(n)}_k\right)^2\right]\left(Y^{(n)}_i\right)^2\left(Y^{(n)}_j\right)^2\left(Y^{(n)}_l\right)^2$ and with similar analysis we obtain that each term produces $A_4$ exactly $3!=6$ times. Due to the minus sign in terms $-\E\left[\left(Y^{(n)}_k\right)^2\right]$ and the fact $24-4\times 6 = 0$ we obtain $b_4=0$. It remains to prove that $b_1 = 12$ and $b_2=b_3 = 0$. For this purpose let $Y^{(n)}_k$ be a sequence of independent standard normal random variables $Y_k$. Then by the classical central limit theorem we have
$$
S_n := \frac{1}{\sqrt{2n}}\sum_{k=1}^n [Y_k^2 - \E Y_k^2] \to \mathcal{N}(0,1)
$$
in distribution and consequently, $\E S_n^4 \to 3$. In this case we have
$$
b_1 \sum_{i,j,k,l}^n A_1(\sigma_0) = b_1n^2
$$
$$
b_2 \sum_{i,j,k,l=1}^n  A_3(\sigma_0) = b_2n^3
$$
and 
$$
b_4 \sum_{i,j,k,l=1}^n  A_2(\sigma_0) = b_3n^4
$$
so that
$$
\E S_n^4 = \frac{1}{4n^2}\left[b_1 n^2 + b_2n^3 + b_3n^4\right].
$$
Now since $b_k,k=1,2,3$ is independent of $n$ and the underlying Gaussian process, the convergence $\E S_n^4 \to 3$ implies $b_2=b_3=0$ and $b_1 = 12$. This completes the proof.

\bibliographystyle{plain}
\bibliography{bibli}
\end{document}